\tikzset{commutative diagrams/.cd,arrow style=tikz,diagrams={>=stealth'}}
\newtheorem{thm}{Theorem}[section]
\newtheorem{lem}[thm]{Lemma}
\newtheorem{prop}[thm]{Proposition}
\newtheorem{cor}[thm]{Corollary}
\theoremstyle{definition}
\newcommand{\surf}[2]{\widehat{\Sigma}_{{#1}{#2}}}
\newcommand{\bsurf}[3]{\mathop{B_{#1}(\widehat{\Sigma}_{{#2}{#3}})}}
\newcommand{\brak}[1]{\ensuremath{\left\{ #1 \right\}}}
\newcommand{\ang}[1]{\ensuremath{\langle #1\rangle}}
\newcommand{\sset}[2]{\ensuremath{\brak{#1 \, , \ldots, \, #2}}}
\newcommand{\N}{\ensuremath{\mathbb N}}
\newcommand{\Z}{\ensuremath{\mathbb Z}}
\newcommand{\D}{\ensuremath{\mathbb D}}
\newcommand{\FF}{\ensuremath{\mathbb F}}
\newcommand{\F}[1][n]{\ensuremath{\F\FF_{{#1}}}}
\renewcommand{\epsilon}{\ensuremath{\varepsilon}}
\renewcommand{\phi}{\ensuremath{\varphi}}
\renewcommand{\to}{\ensuremath{\longrightarrow}}
\newcommand{\MB}[3][,n]{\ensuremath{B_{{#2}{#1}}({#3})}}
\renewcommand{\Im}[1]{\operatorname{Im}(#1)}
\def\tic{\widetilde{c}}
\def\tia{\widetilde{a}}
\def\tib{\widetilde{b}}
\def\z{\zeta}
\def\si{\sigma}
\def\tsi{\widetilde{\sigma}}
\newlength{\wideitemsep}
\let\olditem\item
\renewcommand{\item}{\setlength{\itemsep}{\wideitemsep}\olditem}
\begin{document}

\pagestyle{myheadings}

\markboth{Paolo Bellingeri, Eddy Godelle and John Guaschi}{Abelian and metabelian quotients of surface braid groups}

\title{Abelian and metabelian quotients of surface braid groups}

\author{Paolo Bellingeri, Eddy Godelle and John Guaschi}

\date{}

\maketitle

\begin{abstract} 
In this paper we study Abelian and metabelian
quotients of braid groups fn oriented surfaces with boundary components.       
We provide group presentations and we prove rigidity results for these quotients arising from exact sequences related to (generalised) Fadell-Neuwirth fibrations.
\end{abstract}

\begingroup
\renewcommand{\thefootnote}{}
\footnotetext{2000 AMS Mathematics Subject Classification: 20F14, 20F36. {\it Keywords:} surface braid groups,  lower central series}
\endgroup 

\maketitle


\section{Introduction}


For $n\in \N$, let $B_{n}$ denote the Artin braid group on $n$ strings. The \emph{Burau representation}, which is known not to be faithful if $n\geq 5$, and the \emph{Bigelow-Krammer-Lawrence representation}, which is faithful~\cite{Big0,Kra}, play an important r\^{o}le in the theory of $B_{n}$~\cite{Bir,KT}. They arise as representatives of a larger family of representations of $B_n$ defined via the action on a regular covering space of the  $k\textsuperscript{th}$ permuted configuration space of the $n$-punctured disc $\D_n$ \cite{Z}. As we will explain in Section \ref{section:representations}, these regular coverings are obtained from a pair $(G_k, p_k)$, $k\ge 1$, where $p_k: B_k(\D_n) \to G_k$ is the surjective homomorphism from the $k$-string braid group of $\D_{n}$ onto a group $G_{k}$ that is defined by the following commutative diagram of short exact sequences:
\begin{equation}\label{eq:diagartin}
\begin{xy}*!C\xybox{%
\xymatrix{
1 \ar[r]  &  B_k(\D_n)  \ar[d]^{p_k}   \ar[r]    & B_{k,n}  \ar[d]^{r_{k,n}} \ar[r]          &  B_n   \ar[d]^{r_{n}} \ar[r]        & 1\\
1 \ar[r]  &  G_k  \ar[r]                          & B_{k,n}/ \Gamma_2(B_{k,n})  \ar[r] &  B_n/ \Gamma_2(B_n)  \ar[r]   & 1,}}
\end{xy}
\end{equation}
where $B_{k,n}$ is the mixed Artin braid group on $(k,n)$ strings and $\Gamma_{2}(B_{k,n})$ is the second term in the lower central series of  $B_{k,n}$ (recall that the \emph{lower central series} of $G$ is the filtration $G =\Gamma_1(G) \supseteq \Gamma_2(G) \supseteq \cdots$, 
where $\Gamma_i(G)=[G,\Gamma_{i-1}(G)]$ for $i\geq 2$), and the vertical maps $r_{k,n}$ and $r_{n}$ are the Abelianisation homomorphisms (see Section~\ref{section2} for precise definitions and Section~\ref{section4} for more details about this construction). Although similar constructions have been carried out for other related groups, such as Artin-Tits groups of spherical type~\cite{CW,D}, their generalisation in a more topological direction, to braid and mapping class groups of surfaces for example, remains a largely open problem, with the exception of a few results~\cite{Bar2,BiB,GG7}.

Let $\surf{g}{}$ be an orientable, compact surface of positive genus~$g$ and with one boundary component, and let $\surf{g}{, n}=\surf{g}{} \setminus \sset{x_{1}}{x_{n}}$, where $\sset{x_{1}}{x_{n}}$ is an $n$-point subset in the interior of $\surf{g}{}$. In \cite{HK}, An and Ko described an extension of the Bigelow-Krammer-Lawrence representations of $B_{n}$ to surface braid groups~$B_n(\surf{g}{})$ based on the regular covering associated to a projection map~$\varPhi_\Sigma: B_k(\surf{g}{,n})\to G_\Sigma$ of the braid group~$B_k(\surf{g}{,n})$ onto a specific group~$G_\Sigma$, which can be seen as a kind of generalised Heisenberg group
and  that is constructed as a subgroup of a group $H_\Sigma$. The group $H_\Sigma$ is defined abstractly in terms of its group presentation, and is chosen to satisfy certain technical homological constraints  (Section $3.1$ of \cite{HK}). An and Ko prove a rigidity result for $H_\Sigma$, which states intuitively that it is the `best possible' group that satisfies the constraints. 
However the choices of $H_\Sigma$ and $G_\Sigma$ seem to be based on \emph{ad hoc} technical arguments. 

Our first objective is to show that $G_\Sigma$ defined in  \cite{HK} may be constructed using short exact sequences of surface braid groups emanating from Fadell-Neuwirth fibrations, in which the lower central series $(\Gamma_{i})_{i\in \N}$ of $G_{\Sigma}$ plays a prominent r\^{o}le. These sequences are similar to those for $G_{k}$ given by equation~(\ref{eq:diagartin}), but as we shall see in Lemma~\ref{nolinearext}, there is a marked difference with the case of the Artin braid groups, since the short exact sequence on the $\Gamma_{2}$-level does not yield the expected group $G_\Sigma$ and homomorphism $\Phi_\Sigma$. However, we prove that at the following stage, at the $\Gamma_{3}$-level, the construction does indeed give rise to $G_\Sigma$. Consider the following commutative diagram of short exact sequences (the first line is the short exact sequence~(\ref{eq:sequence}) that we shall recall in Section~\ref{section2}):
\begin{equation}\label{diagramexactsequencea}
\begin{xy}*!C\xybox{%
\xymatrix{
1 \ar[r]  &  B_k(\surf{g}{, n})  \ar[d]^{\Phi_k}   \ar[r]    & B_{k,n}(\surf{g}{})  \ar[d]^{\rho_{k,n}} \ar[r]^{\psi_k}          &  B_n(\surf{g}{})   \ar[d]^{\rho_{n}} \ar[r]        & 1\\
1 \ar[r]  &   G_k\bigl(\surf{g}{}\bigr) \ar[r]           & B_{k,n}(\surf{g}{})/ \Gamma_3(B_{k,n}(\surf{g}{}))  \ar[r]^{\overline{\psi}_k} &  B_n(\surf{g}{})/ \Gamma_3(B_n(\surf{g}{}))  \ar[r]   & 1,}}
\end{xy}
\end{equation}
where  $\rho_{k,n}$ and $\rho_n$ denote the two canonical projections, $\Phi_k$ is the restriction of  $\rho_{k,n}$ to $B_k(\surf{g}{, n}) $, $\psi_k:B_{k,n}(\surf{g}{})\to B_n(\surf{g}{})$ is obtained geometrically by forgetting the first $k$ strings, $\overline{\psi}_k$ is the map induced by $\psi_k$ and $G_k\bigl(\surf{g}{}\bigr)$ is the kernel of~$\overline{\psi}_k$. We shall prove that $G_{\Sigma} = G_k\bigl(\surf{g}{}\bigr)$ and $\Phi_\Sigma = \Phi_k$. More precisely: 
\begin{thm}\label{main_thm} Let $k,n\geq 3$. There is a canonical isomorphism of groups~$\iota: G_\Sigma\to G_k\bigl(\surf{g}{}\bigr)$. Moreover one has $\iota\circ\Phi_\Sigma = \Phi_k$.
\end{thm}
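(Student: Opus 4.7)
The plan is to compare presentations. Since $G_\Sigma$ is specified in \cite{HK} by an explicit presentation (inherited from the explicitly presented group $H_\Sigma$), the strategy is first to extract a presentation of $G_k(\surf{g}{})$ from the commutative diagram (\ref{diagramexactsequencea}), then to construct $\iota$ by matching generators and to verify that the two sets of defining relations coincide.

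A short diagram chase on (\ref{diagramexactsequencea}) shows that $\Phi_k$ is surjective onto $G_k(\surf{g}{})$: given $x \in G_k(\surf{g}{})$, pick a lift $\tilde x \in B_{k,n}(\surf{g}{})$ with $\rho_{k,n}(\tilde x) = x$; then $\psi_k(\tilde x) \in \Gamma_3(B_n(\surf{g}{}))$, and by lifting the defining iterated commutators one at a time through the surjection $\psi_k$ we obtain $z \in \Gamma_3(B_{k,n}(\surf{g}{}))$ with $\psi_k(z) = \psi_k(\tilde x)$; hence $\tilde x z^{-1} \in B_k(\surf{g}{,n})$ satisfies $\Phi_k(\tilde x z^{-1}) = x$. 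Consequently $G_k(\surf{g}{}) \cong B_k(\surf{g}{,n})/N$, where $N = B_k(\surf{g}{,n}) \cap \Gamma_3(B_{k,n}(\surf{g}{}))$. Define $\iota$ on generators of $G_\Sigma$ by sending each to the $\Phi_k$-image of the corresponding standard braid generator of $B_k(\surf{g}{,n})$. To see that $\iota$ is well-defined, the relators of $G_\Sigma$ must hold in $G_k(\surf{g}{})$; they split into triple commutators, which vanish automatically because the ambient quotient $B_{k,n}(\surf{g}{})/\Gamma_3(B_{k,n}(\surf{g}{}))$ is $2$-step nilpotent, and identifications of certain commutators with central generators, which can be read off from the standard mixed surface braid relations modulo $\Gamma_3$. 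Surjectivity of $\iota$ then follows from the surjectivity of $\Phi_k$, and the identity $\iota \circ \Phi_\Sigma = \Phi_k$ holds by construction on generators of $B_k(\surf{g}{,n})$.

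The main obstacle is the injectivity of $\iota$: one must show that $N$ coincides with the normal closure in $B_k(\surf{g}{,n})$ of the relators of $G_\Sigma$. Equivalently, any commutator from $\Gamma_3(B_{k,n}(\surf{g}{}))$ that happens to lie in $B_k(\surf{g}{,n})$ should already be a consequence of those relators. The direct approach starts from explicit presentations of $B_{k,n}(\surf{g}{})$ and $B_k(\surf{g}{,n})$ and rewrites each triple commutator using the mixed surface braid relations, pushing it into the subgroup until it is expressed as a word in the relators of $G_\Sigma$. A more conceptual alternative is to exploit the rigidity theorem of An-Ko for $H_\Sigma$: if one verifies that a natural extension of $G_k(\surf{g}{})$ (e.g.\ the quotient $B_{k,n}(\surf{g}{})/\Gamma_3(B_{k,n}(\surf{g}{}))$ itself, together with the induced map $\overline{\psi}_k$) satisfies the homological constraints characterising $H_\Sigma$, then rigidity produces the map inverse to $\iota$ on the nose.
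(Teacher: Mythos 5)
Your overall strategy --- compare an explicit presentation of $G_k\bigl(\surf{g}{}\bigr)$ with the abstract presentation defining $G_\Sigma$ in~\cite{HK} --- is the one the paper follows, and your diagram chase establishing the surjectivity of $\Phi_k$ and the identification $G_k\bigl(\surf{g}{}\bigr)\cong B_k(\surf{g}{,n})/N$ with $N=B_k(\surf{g}{,n})\cap\Gamma_3(B_{k,n}(\surf{g}{}))$ is correct. The problem is that the step you yourself single out as the main obstacle --- showing that $N$ is the normal closure in $B_k(\surf{g}{,n})$ of the relators of $G_\Sigma$ --- is left as a sketch that does not amount to a proof. The subgroup $N$ consists of arbitrary products of conjugates of triple commutators that \emph{happen} to lie in $B_k(\surf{g}{,n})$; rewriting each individual triple commutator and ``pushing it into the subgroup'' does not control such products, and in general the intersection of a subgroup with a normal closure is not normally generated by anything one can read off relator by relator. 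Your alternative route via the An--Ko rigidity theorem shifts the burden to verifying the homological constraints characterising $H_\Sigma$, which is both nontrivial and contrary to the point of the theorem (an independent, purely algebraic construction of $G_\Sigma$).

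The paper closes this gap by working from the top down rather than from the bottom up: it first establishes a presentation of the full quotient $B_{k,n}(\surf{g}{})/\Gamma_3(B_{k,n}(\surf{g}{}))$ (Propositions~\ref{prop:techresult} and~\ref{lem:presgamma}) and deduces from it a unique normal form for its elements (Corollary~\ref{cor:decgammamix}). The normal form identifies $\ker(\overline{\psi}_k)=G_k\bigl(\surf{g}{}\bigr)$ with the subgroup generated by $\{\si,\z\}\cup AB$, and --- this is the decisive point --- one checks that the candidate relations alone already suffice to reduce any word in these generators to its unique normal form, which is exactly what is needed to conclude that they form a \emph{complete} set of relations for $G_k\bigl(\surf{g}{}\bigr)$ (Proposition~\ref{fondhk}). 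Some such normal-form (or Reidemeister--Schreier type) input is indispensable here; the well-definedness and surjectivity of $\iota$, and the identity $\iota\circ\Phi_\Sigma=\Phi_k$ on generators, which you do establish, are the easy half.
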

Our second objective is to obtain rigidity results within a completely algebraic framework for some of the groups appearing in equation~(\ref{diagramexactsequencea}), thus extending those of~\cite{HK} mentioned above. 
\begin{thm}\label{main_thm_2}
Let $k,n\geq 3$. 
\begin{enumerate}[(i)] 
\item Denote by $\iota_{k,n}: B_{k,n} \to B_{k,n}(\surf{g}{})$, $\iota_k: B_k(\D_n) \to B_k(\surf{g}{, n}) $  and $\iota_n: B_{n} \to B_{n}(\surf{g}{})$ the natural inclusions (see~\cite{PR}). There exist injective homomorphisms $\gamma_k: G_k\to  G_k\bigl(\surf{g}{}\bigr)$, $\alpha_{k,n}:B_{k,n}/ \Gamma_2(B_{k,n})\to B_{k,n}(\surf{g}{})/ \Gamma_3(B_{k,n}(\surf{g}{}))$ and $\alpha_n: B_n/ \Gamma_2(B_n)  \to B_n(\surf{g}{})/ \Gamma_3(B_n(\surf{g}{}))$ so that following diagram of horizontal exact sequences is commutative:
\begin{equation}\label{diagramexactsequencethm}
\begin{xy}*!C\xybox{
\xymatrix{
1 \ar[r]  &  G_k  \ar[r]  \ar@/^3pc/@{-->}[ddd]^{\gamma_{k}}  & B_{k,n}/ \Gamma_2(B_{k,n})  \ar[r]     \ar@/^3pc/@{-->}[ddd]^{\alpha_{k,n}}  &  B_n/ \Gamma_2(B_n)  
\ar@/^3pc/@{-->}[ddd]^{\alpha_n}  \ar[r]    &  1\\
1 \ar[r]  &  B_k(\D_n)  \ar[d]^{\iota_k} \ar[u]_{p_k}   \ar[r]    & B_{k,n}  \ar[d]^{\iota_{k,n}}   \ar[u]_{r_{k,n}} \ar[r]^{}  &  B_n  \ar[d]^{\iota_{n}}   \ar[u]_{r_n} \ar[r]   & 1\\
1 \ar[r]  &  B_k(\surf{g}{, n})  \ar[d]^{\Phi_k}   \ar[r]    & B_{k,n}(\surf{g}{})  \ar[d]^{\rho_{k,n}} \ar[r]  &  B_n(\surf{g}{})   \ar[d]^{\rho_{n}} \ar[r]   & 1\\
1 \ar[r]  &  G_k\bigl(\surf{g}{}\bigr) \ar[r]  & B_{k,n}(\surf{g}{})/ \Gamma_3(B_{k,n}(\surf{g}{}))  \ar[r]^{}    &  B_n(\surf{g}{})/ \Gamma_3(B_n(\surf{g}{}))  \ar[r]   & 1,}}
\end{xy}
\end{equation}
where the rows are the short exact sequences of the commutative diagrams~(\ref{eq:diagartin}) and~(\ref{diagramexactsequencea}).

\item Let $G$ be a group, and $\Phi_G: B_k(\surf{g}{,n}) \to G$ be a surjective homomorphism whose restriction to the group~$B_k(\D_n)$ induces an injective homomorphism from $G_k$ to $G$. Then $G = G_k\bigl(\surf{g}{}\bigr)$ up to isomorphism; more precisely, $\Phi_G = \theta_G\circ\Phi_k$ where $\theta_G$ is an isomorphism.
  
\item Let $H$ be a group, and $\rho_H : B_{k,n}(\surf{g}{}) \to H$ be a surjective homomorphism whose restriction to the group~$B_{k,n}$ induces an injective homomorphism from $B_{k,n}/ \Gamma_2(B_{k,n})$ to $H$. 
 Then $H =  B_{k,n}(\surf{g}{})/{\Gamma_3(B_{k,n}(\surf{g}{}))}$ up to isomorphism; more precisely, $\rho_H = \theta_H\circ\rho_{k,n}$ where $\theta_H$ is an isomorphism.
  
\item Let $K$ be a group, and $\rho_K : B_n(\surf{g}{}) \to K$ be a surjective homomorphism whose restriction to the group~$B_{n}$ induces an injective homomorphism from $B_{n}/ \Gamma_2(B_{n})$ to $K$. 
Then $K=B_n(\surf{g}{})/{\Gamma_3(B_n(\surf{g}{}))}$ up to isomorphism; more precisely, $\rho_K = \theta_K\circ\rho_{n}$ where $\theta_K$ is an isomorphism. 
\end{enumerate}
\end{thm}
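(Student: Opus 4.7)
\textbf{Proof plan for Theorem~\ref{main_thm_2}.} I would tackle the four parts in the order (i), (iv), (iii), (ii), treating the last three under a common rigidity template.

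\emph{Part (i).} Each vertical arrow is induced by one of the natural inclusions $\iota_n$, $\iota_{k,n}$, $\iota_k$, so the first task is to verify that the relevant $\Gamma_2$ is sent into $\Gamma_3$ of the target. Any two Artin generators $\sigma_i$ and $\sigma_j$ are conjugate in $B_n$, hence have the same image in the abelianisation of $B_n(\surf{g}{})$; thus $\sigma_j = \sigma_i u$ for some $u \in \Gamma_2(B_n(\surf{g}{}))$, and $[\sigma_i,\sigma_j] = [\sigma_i,u] \in \Gamma_3(B_n(\surf{g}{}))$. An induction on commutator weight propagates this to all of $\iota_n(\Gamma_2(B_n))$; the analogous argument, applied to the standard presentations of $B_{k,n}$ and $B_k(\D_n)$, handles $\iota_{k,n}$ and $\iota_k$. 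Injectivity of $\alpha_n$ is obtained from the retraction $B_n(\surf{g}{})/\Gamma_3(B_n(\surf{g}{})) \twoheadrightarrow B_n(\surf{g}{})/\Gamma_2(B_n(\surf{g}{})) \cong \Z \oplus \Z^{2g} \twoheadrightarrow \Z = B_n/\Gamma_2(B_n)$, obtained by projecting the abelianisation onto its $\sigma$-factor; a similar retraction handles $\alpha_{k,n}$. Injectivity of $\gamma_k$ then follows by a diagram chase in~(\ref{diagramexactsequencethm}): if $\gamma_k(y) = 1$ and $y = p_k(x)$ with $x \in B_k(\D_n)$, then $\iota_{k,n}(x) \in \Gamma_3(B_{k,n}(\surf{g}{}))$, and injectivity of $\alpha_{k,n}$ forces $x \in \Gamma_2(B_{k,n})$, so $p_k(x) = 1$.

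\emph{Part (iv).} Given $\rho_K\colon B_n(\surf{g}{}) \twoheadrightarrow K$ whose restriction to $B_n$ induces an injection $B_n/\Gamma_2(B_n) \hookrightarrow K$, the plan is to show that $\rho_K$ factors through $\rho_n$ and that the induced map $\theta_K$ is an isomorphism. The factoring requires $K$ to be nilpotent of class at most~$2$. Since $n \geq 3$, the $\sigma_i$'s are mutually conjugate in $B_n$, so the hypothesis yields $\rho_K(\sigma_i) = \sigma$ for every $i$; the relations $[\alpha,\sigma_i] = 1$ for $i \geq 2$ and $\alpha \in \{a_1,b_1,\ldots,a_g,b_g\}$ from the Bellingeri presentation then place $\sigma$ in the centre of $K$. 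Pushing the remaining defining relations through $\rho_K$ shows that every triple commutator of generators vanishes in~$K$, which gives the required nilpotency. Surjectivity of $\theta_K$ is automatic, and injectivity follows by combining the injectivity of $B_n/\Gamma_2(B_n) \hookrightarrow K$ with the retraction of $\alpha_n$ from~(i): any kernel element of $\theta_K$ would, after this retraction, become a non-trivial element of $B_n/\Gamma_2(B_n)$ killed in~$K$, contradicting the hypothesis.

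\emph{Parts (iii) and (ii).} Part~(iii) repeats the argument of~(iv) with $B_{k,n}(\surf{g}{})$ in place of $B_n(\surf{g}{})$ and the corresponding mixed-braid presentation; no new idea is needed. Part~(ii) is the most delicate because $G_k(\surf{g}{})$ is defined as the kernel of $\overline{\psi}_k$ rather than as a plain quotient. My plan is to extend $\Phi_G$ to a homomorphism on the whole of $B_{k,n}(\surf{g}{})$ by using the splitting of $\psi_k$ (available because $\surf{g}{}$ has boundary) and the hypothesis on the $G_k$-part, then to invoke part~(iii) to identify the extended target with $B_{k,n}(\surf{g}{})/\Gamma_3(B_{k,n}(\surf{g}{}))$, and finally to restrict to the kernel of $\overline{\psi}_k$, producing the desired isomorphism~$\theta_G$.

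\emph{Main obstacle.} The crux in parts~(ii)--(iv) is upgrading the rather weak hypothesis (abelianness of $\rho$ on the Artin subgroup, plus injectivity of its $\Gamma_2$-quotient) into the much stronger conclusion that the full target is nilpotent of class~$2$. This requires presentation-level bookkeeping of the mixed relations between the braid generators and the surface generators $a_r, b_r$, and is where the bulk of the technical work lies; once this step is in place the isomorphisms $\theta_K$, $\theta_H$, $\theta_G$ emerge from essentially formal diagram chases using part~(i).
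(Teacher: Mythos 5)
There is a genuine gap, and it sits at the heart of your injectivity arguments. You claim that $\alpha_n$ is injective because of a retraction
$B_n(\surf{g}{})/\Gamma_3(B_n(\surf{g}{})) \twoheadrightarrow B_n(\surf{g}{})/\Gamma_2(B_n(\surf{g}{})) \cong \Z\oplus\Z^{2g} \twoheadrightarrow \Z$
projecting onto the $\sigma$-factor. But the Abelianisation of $B_n(\surf{g}{})$ is $\Z^{2g}\times\Z_2$, not $\Z\oplus\Z^{2g}$: the surface relation $a_i\sigma_1 b_i=\sigma_1 b_i\sigma_1 a_i\sigma_1$ forces $\sigma^2=1$ in any Abelian quotient (Proposition~\ref{lem:presbngabel}), so the generator of $B_n/\Gamma_2(B_n)\cong\Z$ is sent to a $2$-torsion element and the composite is not injective. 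This is precisely the content of Corollary~\ref{nolinearextK1}: the inclusion of $B_n$ does \emph{not} embed on the level of Abelian quotients, which is the whole reason the paper passes to $\Gamma_3$. The situation for $\alpha_{k,n}$ is worse still, since in the Abelianisation of $B_{k,n}(\surf{g}{})$ the element $\z$ becomes trivial. Nor can you repair this by retracting inside the $\Gamma_3$-quotient onto the central copy of $\langle\si\rangle$ (or $\langle\si,\tsi,\z\rangle$): the relation $[a_i,b_i]=\si^2$ shows that any homomorphism killing the $a_i,b_i$ also kills $\si^2$, so no such retraction exists. Since your injectivity arguments for $\theta_K$ and $\theta_H$ in parts (iii) and (iv) also invoke ``the retraction of $\alpha_n$ from (i)'', the same flaw propagates there.

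The paper's substitute is structural rather than retraction-based: one computes the $\Gamma_3$-quotients explicitly (Propositions~\ref{lem:presgamma} and~\ref{lem:presbng}), obtains a normal form and a semi-direct product decomposition (Corollaries~\ref{cor:decgammamix} and~\ref{cor:decgamma}), and shows that $\si,\tsi,\z$ generate a free Abelian subgroup equal to the centre; injectivity of $\alpha_{k,n}$ then follows by matching bases, and injectivity of $\theta_H$, $\theta_K$ follows from Lemma~\ref{lem:equivinjective}, which uses that $\Gamma_2$ of the $\Gamma_3$-quotient is contained in its centre, which in turn is the image of $\alpha$. Your remaining outline (factoring through $\rho_n$ by establishing class-$2$ nilpotency of $K$ via the presentations) does match the paper's Proposition~\ref{prop:techresult}. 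Finally, for part (ii) your plan to ``extend $\Phi_G$ to $B_{k,n}(\surf{g}{})$ using the splitting'' is underspecified: a homomorphism defined on the kernel of a split extension extends to the whole group only if it is suitably equivariant, which is not given. The paper sidesteps this by working with $B_k(\surf{g}{,n})/\Gamma_3(B_k(\surf{g}{,n}))$ and the quotient $M_k\bigl(\surf{g}{}\bigr)$ obtained by identifying the $\z_i$ (Corollaries~\ref{cor:decgamma4} and~\ref{mainthipr2}).
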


We will in fact obtain a stronger result, by proving that some of the above results remain true when the assumptions on $\Phi_G$, $\rho_K$ and $\rho_H$ are relaxed. Remarking that the group~$H_\Sigma$ of~\cite{HK} is a quotient of $B_{k,n}(\surf{g}{})/{\Gamma_3(B_{k,n}(\surf{g}{}))}$,  in Proposition~\ref{fondpropcons2}, we exhibit an alternative proof of~\cite[Theorem~4.3]{HK}.

\medskip

This paper is organised as follows. In Section~\ref{section2} we recall the definitions of (mixed) surface braid groups and their associated short exact sequences. The first part 
of Section~\ref{section3} is devoted to obtaining presentations of mixed surface braid groups. 
In Section~\ref{sec:presquotients} we describe the Abelianisations of (mixed) surface braid groups  
(Propositions \ref{lem:presgammaabel} and \ref{lem:presbngabel}) and we show in particular that it is not possible to embed 
$B_{k,n}/\Gamma_2(B_{k,n})$ in any Abelian quotient of $B_{k,n}(\surf{g}{})$.
In Section~\ref{sec:metaquotients}, we obtain similar results but at the level of quotients of surface mixed braid groups  by $\Gamma_{3}$ rather than by $\Gamma_{2}$.
We give a presentation for $B_{k,n}(\surf{g}{}) / \Gamma_3(B_{k,n}(\surf{g}{}))$ (Proposition \ref{lem:presgamma}) and a normal form for elements of this quotient (Corollary \ref{cor:decgammamix}), and we prove several 
rigidity results for (metabelian) quotients of  $B_{k,n}(\surf{g}{}) $ (Corollary \ref{cor:themain2iii}). In particular, we show that 
$B_{k,n}/\Gamma_2(B_{k,n})$ embeds in   $B_{k,n}(\surf{g}{}) / \Gamma_3(B_{k,n}(\surf{g}{}))$
and   we deduce Theorem~\ref{main_thm_2}(iii). Similar results are given in Section~\ref{sec:metaquotients2} for surface braid groups, the main result being Corollary~\ref{cor:themain2iii}, which implies Theorem~\ref{main_thm_2}(i) and (iv).

In Section~\ref{section4}, we prove  Theorem~\ref{main_thm} and Theorem~\ref{main_thm_2}(ii), and we show that  it is not possible to extend  the length function $\lambda: B_n \to \Z$ to braid groups of closed oriented surfaces (Proposition \ref{fondpropcons2}). Finally in Section~\ref{section:representations},
we describe  an algebraic approach to the Burau and Bigelow-Krammer-Lawrence representations that is based on the lower central series,  and
we explain why it is not possible to extend them to representations of surface braid groups. 
 This latter remark was made in \cite{HK} under certain conditions of a homological nature. Within a purely algebraic framework, we prove this non-existence result with fewer conditions than those given in~\cite{HK}.


\section{Preliminaries on configuration spaces} \label{section2}


Surface braid groups are a natural generalisation of both the classical
braid groups and the fundamental group of surfaces.
We recall Fox's definition in terms of
fundamental groups of configuration spaces~\cite{FoN}. Let $\Sigma$ be a connected surface, and let $\FF_n(\Sigma)=\Sigma^n
\setminus \Delta$, where $\Delta$ is  the set of $n$-tuples $(x_1,
\dots, x_n)\in \Sigma^n$ for which $x_i=x_j$ for some $i \not= j$. The
fundamental group $\pi_1(\FF_n(\Sigma))$ is called the \emph{pure
braid group} on $n$ strings of the surface $\Sigma$ and shall be
denoted by $P_n(\Sigma)$. The symmetric group $S_n$ acts freely on $\FF_n(\Sigma)$ by permutation of coordinates, and the fundamental group $\pi_1(\FF_n(\Sigma)/S_n)$ of the resulting quotient space, denoted by $B_n(\Sigma)$, is the \emph{braid group} on $n$ strings of the surface $\Sigma$. Further, $\FF_n(\Sigma)$ is a regular $n!$-fold covering of
$\FF_n(\Sigma)/S_n$, from which we obtain the following short exact sequence:
\begin{equation}\label{eq:permutation}
1\to P_n(\Sigma) \to B_n(\Sigma) \to S_{n}\to 1.
\end{equation}
In the case of the disc~$\D^{2}$, it is well known that $B_{n}(\D^2)\cong B_{n}$ and $P_{n}(\D^2)\cong P_{n}$.

Let $k,n\in\N$. Regarded as a subgroup of $S_{k+n}$, the group $S_k
\times S_n$ acts on $\FF_{k+n}(\Sigma)$. The fundamental group
$\pi_1\left(\FF_{k+n}(\Sigma)/(S_k \times S_n)\right)$ will be called
the \emph{mixed braid group of $\Sigma$ on $(k,n)$ strings}, and shall
be denoted by $\MB{k}{\Sigma}$.  We shall denote $\MB{k}{\D^2}$ simply by $B_{k,n}$. In an obvious manner, we have $P_{k+n}(\Sigma) \subset \MB{k}{\Sigma} \subset B_{k+n}(\Sigma)$. Mixed braid groups, which play an important r\^ole in~\cite{HK}, were defined previously in~\cite{GG2,Man,PR}, and were studied in more detail in~\cite{GG4} in the case where $\Sigma$ is the $2$-sphere $\mathbb{S}^2$. 

Consider the Fadell-Neuwirth fibration $\FF_{k+n}(\Sigma) \to \FF_{n}(\Sigma)$ given by forgetting the first $k$ coordinates. Its long exact sequence in homotopy yields the \emph{surface pure braid group} short exact sequence~\cite{FaN}:
\begin{equation}\label{eq:psequence}
1 \to P_k(\Sigma \setminus \sset{x_{1}}{x_{n}}) \to P_{k+n}(\Sigma)
\to P_n(\Sigma) \to 1,\tag{SPB}
\end{equation}
where $n\geq 3$ (resp.\ $n\geq 2$) if $\Sigma=\mathbb{S}^{2}$ (resp.\ $\Sigma$ is the projective plane $\mathbb{R}P^{2}$). In a similar manner, the map $\FF_{k+n}(\Sigma)/(S_k \times S_n) \to \FF_n(\Sigma)/S_n$ defined by forgetting the first $k$ coordinates, is a locally-trivial fibration whose fibre may be identified with
$\FF_k(\Sigma \setminus \sset{x_{1}}{x_{n}} )/S_{k}$.  With the same constraints on $n$ if $\Sigma=\mathbb{S}^{2}$ or $\mathbb{R}P^{2}$, this fibration gives rise to the \emph{surface mixed braid group} short exact sequence:
\begin{equation}\label{eq:sequence}
1 \to B_k(\Sigma \setminus \sset{x_{1}}{x_{n}}) \to \MB{k}{\Sigma}
\stackrel{\psi_{k}}{\to} B_n(\Sigma) \to 1,\tag{SMB}
\end{equation}
where $\psi_{k}$ is the epimorphism given in diagram \ref{diagramexactsequencea} that may be interpreted geometrically by forgetting the first $k$ strings. Note that~(\ref{eq:psequence}) is the restriction of~(\ref{eq:sequence}) to the corresponding pure braid groups. From now on, we denote $\D^2 \setminus \sset{x_{1}}{x_{n}}$ by $\D_n$.

The group~$B_k(\D_n)$ (resp.\ $P_k(\D_n)$) may be seen to be isomorphic to the subgroup of $B_{k+n}$ (resp.\ $P_{k+n}$) consisting of braids whose last $n$ strings are trivial (vertical). As we shall see in Section~\ref{section4}, one important ingredient in the construction of representations of the Artin braid groups is the splitting of the short exact sequences~(\ref{eq:sequence}) and~(\ref{eq:psequence}) when $\Sigma=\D^{2}$; in both cases, a section, which we refer to henceforth as the \emph{standard section}, is given by adding $k$ vertical strings (see for instance \cite{HK,B}). With the aim of obtaining representations of the braid groups of $\Sigma$, it is thus natural to ask in which cases these sequences split. Note that there is a commutative diagram of short exact sequences, where the first line is~(\ref{eq:psequence}), the second line is~(\ref{eq:sequence}), and the third line is:
\begin{equation*}
1 \to S_{k} \to S_{k}\times S_{n} \to S_{n} \to 1.
\end{equation*}
The question of the splitting of~(\ref{eq:psequence}) has been solved completely (see~\cite{GG6} for a summary). In particular, if $\Sigma$ is a compact surface without boundary and different from $\mathbb{S}^{2}$ and $\mathbb{R}P^{2}$ then~(\ref{eq:psequence}) only splits if $n=1$, and one may show that this implies the splitting of~(\ref{eq:sequence}) in this case. Using the methods of~\cite{GG1}, it follows that both~(\ref{eq:psequence}) and~(\ref{eq:sequence}) split if $\Sigma$ has non-empty boundary. Some partial results for the splitting of~(\ref{eq:sequence}) are known if $\Sigma$ has empty boundary (see for example~\cite{FaV,GG4} for the case of $\mathbb{S}^2$), but in general the question remains unanswered.

\section{Surface braid groups and lower central series}
 \label{section3}

As in the Introduction, let $\surf{g}{}$ be a compact, connected orientable surface
of genus $g\geq 0$ with a single boundary component, and let $k\geq 1$ and $n\geq 0$. We will make use of the notation introduced in the commutative diagrams~(\ref{diagramexactsequencea}) and~(\ref{diagramexactsequencethm}). We focus on $\surf{g}{}$ essentially for two reasons; the first is that as we mentioned in Section~\ref{section2}, the short exact sequence~(\ref{eq:sequence}) for $\surf{g}{}$ splits and therefore $B_n(\surf{g}{})$ acts by conjugation on $B_k(\surf{g}{, n})$. The second reason is that Theorem~\ref{main_thm_2}(iv) is not valid if we replace $\surf{g}{}$ by a compact surface without boundary (this fact will be a straightforward consequence of Proposition~\ref{fondpropcons2}).

In this section, we shall prove parts~(i), (iii) and~(iv) of Theorem~\ref{main_thm_2}.
Taking into account the commutative diagrams~(\ref{eq:diagartin}) and~(\ref{diagramexactsequencea}) as well as the following commutative diagram:
\begin{equation} \label{eq:injnat}
\begin{xy}*!C\xybox{%
\xymatrix{
1 \ar[r]  &  B_k(\D_n)  \ar[d]^{\iota_k}    \ar[r]    & B_{k,n}  \ar[d]^{\iota_{k,n}}   \ar[r]        &  B_n  \ar[d]^{\iota_{n}}    \ar[r]                    & 1\\
1 \ar[r]  &  B_k(\surf{g}{, n})      \ar[r]    & B_{k,n}(\surf{g}{})    \ar[r]                                &  B_n(\surf{g}{})     \ar[r]                      & 1,}
}
\end{xy}
\end{equation}
to prove Theorem~\ref{main_thm_2}(i), it will suffice to show the existence of the homomorphisms $\gamma_k, \alpha_k, \alpha_{k,n}$ and $\alpha_{n}$, and to verify commutativity in the vertical parts of the commutative diagram~(\ref{diagramexactsequencethm}). The main result of this section is Proposition~\ref{prop:techresult}, which is a stronger version of Theorem~\ref{main_thm_2}(iii). In Section~\ref{sec:mixed}, we start by exhibiting a presentation for $\bsurf{k,n}{g}{}$. 
In what follows, we will consider the following disjoint sets:  
$$\begin{array}{lcllcl}S&=&\{ \si_1, \ldots, \si_{k-1}\},& \widetilde{S}&=&\{\tsi_1, \ldots, \tsi_{n-1}\},\\
AB&=&\{a_1, b_1, \ldots, a_g, b_g\},&\widetilde{AB}&=&\{\tia_1, \tib_1, \ldots, \tia_g, \tib_g\},\\
Z&=&\{\zeta_1,\ldots, \zeta_n\}.
\end{array}$$
If $k=1$ (resp.\ $n=0$, $n=1$, $g=0$) then $S$ (resp.\ $\widetilde{S}\cup Z$, $\widetilde{S}$, $AB\cup \widetilde{AB}$) is taken to be empty. For $c,d\in AB$ we write $c<d$ if $c \in \{a_i,b_i\}$ and $d \in\{a_j,b_j\}$ with $i<j$. Similarly, for $c,d\in \widetilde{AB}$ we write $c<d$ if $c \in\{\tia_i,\tib_i\}$ and $d \in\{\tia_j,\tib_j\}$ with $i<j$. If $x$ and $y$ are elements of a group then we set $x^y=y^{-1}xy$ and $[x,y] = xyx^{-1} y^{-1}$.

\subsection{Presentations of surface mixed braid groups}\label{sec:mixed}


In order to prove Theorem~\ref{main_thm_2}, we will need to understand the structure of surface (mixed) braid groups and some of their quotients. With this in mind, in this section, we recall presentations of $\bsurf{k}{g}{,n}$ and $\bsurf{n}{g}{}$, and we derive a presentation of $\bsurf{k,n}{g}{}$.

If $g=0$, part~(i) of the following result is proved in~\cite{Lam}. If $g\geq 1$, the presentations of parts~(i) and~(ii) may be found in~\cite{B}. From hereon, if $g=0$ then the elements of $AB \cup \widetilde{AB}$ and all relations containing these elements should be suppressed.

\begin{prop}\label{presconnues}
Let $k,n\geq 1$, and let $g\geq 0$.
\begin{enumerate}[(i)]  
\item The group~$\bsurf{k}{g}{,n}$ admits the following group presentation:

\noindent \textbf{Generating set:}  $S\cup AB\cup Z$; 

\noindent  \textbf{Relations:}
$$\begin{array}{cll}
{\textit{(a.1)}}&\si_i\si_j=\si_j\si_i,&\lvert i-j \rvert \geq 2;\\
\textit{(a.2)}&\si_i\si_{i+1}\si_i= \si_{i+1}\si_i \si_{i+1},& 1\leq i\leq k-2;\\
\textit{(a.3)}&c\si_i= \si_{i} c,&i\not=1,\; c \in AB\cup Z;\\
\textit{(a.4)}&c \si_1 c \si_1= \sigma_1 c \sigma_1 c, & c \in AB\cup Z; \\
\textit{(a.5)}&a_i \sigma_1 b_i = \sigma_1 b_i \sigma_1 a_i \sigma_1,& i\in \{1, \ldots, g\};\\
\textit{(a.6)}&(\sigma_1^{-1} c \sigma_1) d=d (\sigma_1^{-1} c \sigma_1), & c,d\in AB,\   c<d;\\
\textit{(a.7)}&(\si_1^{-1}\zeta_{i} \si_1)c = c (\si_1^{-1}\zeta_{i}\si_1), &c \in AB, \ \zeta_i \in  Z;\\
\textit{(a.8)}&(\si_1^{-1}\zeta_{i}\si_1)\zeta_{j} = \zeta_{j} (\si_1^{-1}\zeta_{i}\si_1),& i<j.
\end{array}$$
\item The group~$\bsurf{n}{g}{}$ admits the following group presentation:

\noindent \textbf{Generating set:}  $\widetilde{S}\cup \widetilde{AB}$; 

\noindent  \textbf{Relations:}
$$\begin{array}{cll}
\textit{(b.1)}&\tsi_i\tsi_j=\tsi_j\tsi_i,&\lvert i-j \rvert \geq 2;\\
\textit{(b.2)}&\tsi_i\tsi_{i+1}\tsi_i= \tsi_{i+1}\tsi_i \tsi_{i+1},& 1\leq i\leq n-2;\\
\textit{(b.3)}&\tic\,\tsi_i= \tsi_i \tic,&i\not=1,\; \tic \in \widetilde{AB};\\
\textit{(b.4)}&\tic \,\tsi_1 \tic \,\tsi_1= \tsi_1 \tic \,\tsi_1 \tic,& \tic \in \widetilde{AB}; \\
\textit{(b.5)}&\tia_i\tsi_1 \tib_i = \tsi_1 \tib_i\tsi_1 \tia_i \tsi_1,& i\in \{1, \ldots, g\};\\
\textit{(b.6)}&(\tsi_1^{-1} \tic\, \tsi_1) \tilde{d}=\tilde{d} (\tsi_1^{-1}  \tic \,\tsi_1),&\tic,\widetilde{d} \in \widetilde{AB},\   \tic<\widetilde{d}.
\end{array}$$
\end{enumerate}
\end{prop}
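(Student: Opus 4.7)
The proposition collects presentations that already appear in the literature: part~(i) for $g = 0$ is established in~\cite{Lam}, and both parts for $g \geq 1$ are proved in~\cite{B}. Part~(ii) with $g = 0$ reduces to the classical Artin presentation of $B_n$, namely~(b.1)--(b.2) after suppressing $\widetilde{AB}$ and all relations involving these generators, as prescribed just before the statement. The plan is therefore to match the stated generating sets and relations with those in the cited sources, taking care of the notational conventions fixed in Section~\ref{section3}.

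For an independent derivation, the natural approach is induction on the number of strings using the Fadell--Neuwirth fibrations recalled in Section~\ref{section2}. The base cases are easy: $\bsurf{1}{g}{} = \pi_1(\surf{g}{})$ is free of rank $2g$ on $\widetilde{AB}$ because $\surf{g}{}$ is a compact surface with non-empty boundary, and similarly $\bsurf{1}{g}{,n} = \pi_1(\surf{g}{,n})$ is free of rank $2g + n$ on $AB \cup Z$; in both cases the listed relations are vacuous. The inductive step applies the short exact sequence~(\ref{eq:sequence}) with $\Sigma = \surf{g}{}$, together with a standard presentation-of-extensions lemma: given presentations of the kernel $\bsurf{k}{g}{,n}$ and of the quotient $\bsurf{n}{g}{}$, the total group $\bsurf{k,n}{g}{}$ admits a presentation whose generators are lifts of those of the quotient together with the fibre generators, and whose relations are the fibre relations, the lifted quotient relations rewritten as fibre words, and the conjugation actions of quotient generators on fibre generators. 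With the geometric choice of $\si_i$, $a_i$, $b_i$, $\z_i$ (and their tilded analogues), each required identity is then one of~(a.1)--(a.8) or~(b.1)--(b.6).

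The main obstacle in such an independent proof is the explicit combinatorial identification of the conjugation actions, particularly the mixed relations~(a.4)--(a.8) and~(b.3)--(b.6), which encode how the half-twist $\si_1$ (respectively $\tsi_1$) interacts with the handle loops and with loops around the punctures. This verification is a careful but routine geometric calculation on braid diagrams drawn on $\surf{g}{}$, and is precisely where we invoke the presentations already established in~\cite{Lam,B}. Once this identification is made, completeness follows automatically from the extension lemma, since the lifted relations together with the fibre presentation and the conjugation rules exhaust the relations of the total group.
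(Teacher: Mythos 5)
Your proposal takes essentially the same route as the paper, which states this proposition without proof and simply attributes part~(i) for $g=0$ to~\cite{Lam} and both parts for $g\geq 1$ to~\cite{B} (part~(ii) for $g=0$ being the classical Artin presentation). One caveat worth noting: the supplementary inductive sketch you offer, built on the sequence~(\ref{eq:sequence}) with kernel $\bsurf{k}{g}{,n}$ and quotient $\bsurf{n}{g}{}$, is really the extension argument the paper uses afterwards to prove Proposition~\ref{prop:presMB} for the mixed group $\bsurf{k,n}{g}{}$ and would be circular as a proof of the present statement, but since you ultimately defer to the cited sources for the actual verification, this does not affect the correctness of your answer.
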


The presentation of part~(i) may be adapted to the case~$n= 0$ by suppressing $Z$ in the presentation of part~(ii). However, to obtain a presentation of $\bsurf{k,n}{g}{}$ from Proposition~\ref{presconnues}, it will be convenient to have both presentations at our disposal. If $g=0$ then $\bsurf{k,n}{g}{}$ is equal to $B_{k,n}$. An alternative presentation of $\bsurf{k,n}{g}{}$ may be found in~\cite{HK}; in the special case of $B_{k,n}$, see~\cite{FRZ}.

\begin{prop}\label{prop:presMB}
Let $k,n\geq 1$, and let $g\geq 0$. The group~$\bsurf{k,n}{g}{}$ admits the following group presentation:

\noindent \textbf{Generating set:} $\Omega_{k,n} = S\cup \widetilde{S}\cup AB\cup \widetilde{AB}\cup Z$;

\noindent  \textbf{Relations:}
\begin{enumerate}[(a)]
\item the relations~(a.1)--(a.8) given in Proposition~\ref{presconnues}(i). 
\item the relations (b.1)--(b.6) given in Proposition~\ref{presconnues}(ii).
\item the relations that describe the action of~$\bsurf{n}{g}{}$ on~$\bsurf{k}{g}{,n}$:
\begin{enumerate}
\item[(c.1)] $\tsi_i\si_j\tsi_i^{-1} = \tia_i\si_j\tia_i^{-1} = \tib_i\si_j\tib_i^{-1} =\si_j$;

\item[(c.2)]  $\tsi_ia_j\tsi_i^{-1} = a_j$, $\tsi_ib_j\tsi_i^{-1} = b_j$; 

\item[(c.3)] $\left\{\begin{array}{clll}
\textit{(c.3.1)}&\tsi_i\z_{i+1}\tsi_i^{-1} = \z_i;\\
\textit{(c.3.2)}&\tsi_i\z_{i}\tsi_i^{-1} = \z_i^{-1}\z_{i+1}\z_i;\\
\textit{(c.3.3)}& \tsi_i\z_{j}\tsi_i^{-1} = \z_j,&j\neq i,i+1;\end{array}\right.$
(c.4) $\left\{\begin{array}{cllll}
\textit{(c.4.1)}&\tia_i \z_1 \tia_i^{-1} =\z_1^{a_i \z_1} ;\\
\textit{(c.4.2)}&\tib_i \z_1 \tib_i^{-1} =  \z_1^{b_i \z_1};\\
\textit{(c.4.3)}&\tia_i \z_j \tia_i^{-1} = \z_j^{[a_i^{-1},\z_1^{-1}]}, &j\neq 1;\\
\textit{(c.4.4)}&\tib_i \z_j \tib_i^{-1} = \z_j^{[b_i^{-1},\z_1^{-1}]}, &j\neq 1;
\end{array}\right.$

\item[(c.5)] $\left\{\begin{array}{cll}
\textit{(c.5.1)}& \tia_ia_i\tia_i^{-1} = \z_1^{-1} a_i \z_1;\\ 
\textit{(c.5.2)}&\tia_i a_j\tia_i^{-1} =  a_j^{[a_i^{-1},\z_1^{-1}]},&i > j;\\
\textit{(c.5.3)}&\tia_i a_j \tia_i^{-1} = a_j,&j > i;\end{array}\right.$  
\ \ \ \ \ \ (c.6) $\left\{\begin{array}{cll}
\textit{(c.6.1)}& \tib_i b_i\tib_i^{-1} = \z_1^{-1} b_i \z_1;\\ 
\textit{(c.6.2)}&\tib_i b_j\tib_i^{-1} =  b_j^{[b_i^{-1},\z_1^{-1}]}, &i > j;\\
\textit{(c.6.3)}&\tib_i b_j \tib_i^{-1} = b_j,&j > i;\end{array}\right.$

\item[(c.7)] $\left\{\begin{array}{cll} 
\textit{(c.7.1)}&\tia_ib_i\tia_i^{-1} = b_i \z_1;\\  
\textit{(c.7.2)}&\tia_i b_j\tia_i^{-1} = b_j^{[a_i^{-1},\z_1^{-1}]},  &i > j;\\
\textit{(c.7.3)}&\tia_i b_j \tia_i^{-1} = b_j,&j > i
\end{array}\right.$ 
\ \ \ \ \ \ \ \ \ \ (c.8) $\left\{\begin{array}{cll}
 \textit{(c.8.1)}&\tib_ia_i\tib_i^{-1} = \z_1^{-1} a_i [b_i^{-1},\z_1^{-1}];\\
\textit{(c.8.2)}&\tib_i a_j\tib_i^{-1} = a_j^{[b_i^{-1},\z_1^{-1}]},&\hspace*{-0.9cm}i > j;\\ 
\textit{(c.8.3)}&\tib_i a_j \tib_i^{-1} = a_j,&\hspace*{-0.9cm}j > i. \end{array}\right.$
\end{enumerate}
\end{enumerate}
\end{prop}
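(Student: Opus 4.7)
The plan is to apply the standard presentation recipe for a split extension. As recalled at the end of Section~\ref{section2}, the sequence~\reqref{sequence} splits for $\surf{g}{}$ via the section $s:\bsurf{n}{g}{}\to\bsurf{k,n}{g}{}$ that adds $k$ vertical strings on the left, so $\bsurf{k,n}{g}{}\cong B_k(\surf{g}{,n})\rtimes\bsurf{n}{g}{}$. The general fact I will invoke is that if $N=\langle X_N\mid R_N\rangle$ and $Q=\langle X_Q\mid R_Q\rangle$ are group presentations, then $N\rtimes Q$ is presented by the generating set $X_N\cup X_Q$ subject to $R_N$, $R_Q$, together with one conjugation relation $qxq^{-1}=\phi_q(x)$ per pair $(q,x)\in X_Q\times X_N$, where $\phi_q(x)$ is any chosen word in $X_N$ representing the conjugate $qxq^{-1}$ in $N$.

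Applying this with $N=B_k(\surf{g}{,n})$, $Q=\bsurf{n}{g}{}$, $X_N=S\cup AB\cup Z$ and $X_Q=\widetilde{S}\cup\widetilde{AB}$, Proposition~\ref{presconnues}(i) and~(ii) supply the relation blocks~(a.1)--(a.8) and~(b.1)--(b.6) verbatim. The sufficiency of the action relations~(c.1)--(c.8) is then automatic from the recipe, so the remaining task is to verify, for each pair $(q,x)\in X_Q\times X_N$, the corresponding conjugation identity inside $\bsurf{k,n}{g}{}$.

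These verifications are geometric computations carried out on a standard disc-with-handles model of $\surf{g}{}$. Relations~(c.1) and~(c.2) are immediate, since the generators in $S\cup AB$ are supported in a neighbourhood of the first $k$ strings, disjoint from the support of the last $n$ strings carrying $\widetilde{S}\cup\widetilde{AB}$. Relations~(c.3) reduce to the classical braid crossing identity for $\tsi_i$ acting on the puncture-loops $\z_j$. The genuinely surface-specific content is in~(c.4)--(c.8), where a generator $\tia_i$ or $\tib_i$ drags the puncture $x_i$ along a handle of $\surf{g}{}$: the resulting twist on neighbouring strings produces the commutator corrections $[a_i^{-1},\z_1^{-1}]$ and $[b_i^{-1},\z_1^{-1}]$ that appear on the right-hand sides. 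A convenient way to organise the bookkeeping is to lift each identity to the pure mixed braid group via~\reqref{psequence} and to recognise it as a direct image of a pure braid relation in the presentation of $P_{k+n}(\surf{g}{})$ of~\cite{B}; equivalently, one can draw each braid explicitly and simplify using the disc-subword relations (a) and (b).

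The main obstacle is purely computational: carrying out the commutator tracking in~(c.4)--(c.8) without sign, order or indexing errors, since several of these identities involve nested conjugations by elements built from the genus generators. A useful sanity check is that the genus-zero specialisation, in which $AB\cup\widetilde{AB}$ becomes empty and~(c.4)--(c.8) collapse entirely, recovers the presentation of $B_{k,n}$ given in~\cite{FRZ}.
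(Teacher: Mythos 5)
Your proposal is correct and follows essentially the same route as the paper: both use the splitting of the sequence \reqref{sequence} to write $\bsurf{k,n}{g}{}$ as a semi-direct product of $B_k(\surf{g}{,n})$ by $\bsurf{n}{g}{}$, invoke the standard presentation of a split extension (the paper cites~\cite[Chap.~10, Proposition~1]{joh} for the recipe you state), import the relation blocks~(a) and~(b) from Proposition~\ref{presconnues}, and verify the conjugation relations~(c) by explicit geometric computation on a polygon/handle model of $\surf{g}{}$. Your additional remarks (lifting to the pure braid presentation of~\cite{B}, and the genus-zero sanity check against~\cite{FRZ}) are consistent with, but not needed beyond, what the paper does.
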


\begin{proof}
If $g\geq 1$, we first give a geometric interpretation of the generators of $\bsurf{k,n}{g}{}$. We represent $\surf{g}{}$ as a regular polygon with $4g$ edges, equipped with the standard identification of edges, and one boundary component.
We consider braids to be paths on the polygon,
which we draw with the usual over- and under-crossings,
and we interpret the braids depicted in Figure~\ref{figure1mixed}
 as geometric representatives of the generators of $\bsurf{k}{g}{,n}$, and those depicted in Figure~\ref{figure2mixed} as the coset representatives of generators of $\bsurf{n}{g}{}$ in $\bsurf{k,n}{g}{}$. For example, 
for the braid $a_r$ (respectively $b_r$), the only non-trivial string is the first one,
which passes  through the wall $\alpha_r$ (respectively the wall $\beta_r$). If $g\geq 0$,
one can therefore check that relations hold for corresponding geometric braids, see Figure~\ref{figure3mixed} for example.  
 
From Section~\ref{section2}, the short exact sequence~(\ref{eq:sequence}) 
splits. The group~$\bsurf{k,n}{g}{}$ is thus isomorphic to a semi-direct product~$\bsurf{n}{g}{}\ltimes \bsurf{k}{g}{,n}$. Proposition~\ref{presconnues} implies that the set of relations~(a) and~(b) provide a complete set of relations for $\bsurf{k}{g}{,n}$ and $\bsurf{n}{g}{}$ respectively. The set of relations~(c) describes the action of the generators of $\bsurf{k}{g}{,n}$ on those of $\bsurf{n}{g}{}$. The set of relations~(a)--(c) therefore form a complete set of relations for $\bsurf{k,n}{g}{}$ by~\cite[Chap.~10, Proposition~1]{joh}.
\end{proof}

\begin{figure}[h]
 \centering
\includegraphics[width=10.5cm]{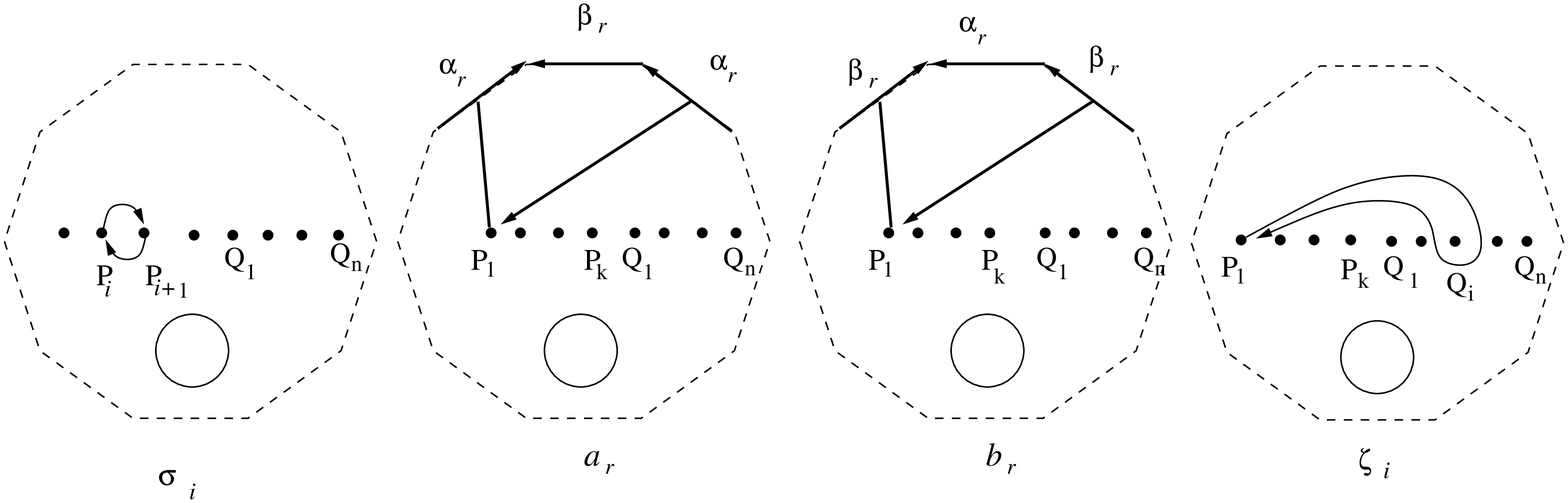}
 \caption{The generators $\si_1, \ldots, \si_{k-1}, a_1, b_1, \ldots, a_g, b_g,\zeta_1,\ldots, \zeta_n$}\label{figure1mixed}
\end{figure}

\begin{figure}[h]
 \centering
\includegraphics[width=8.6cm]{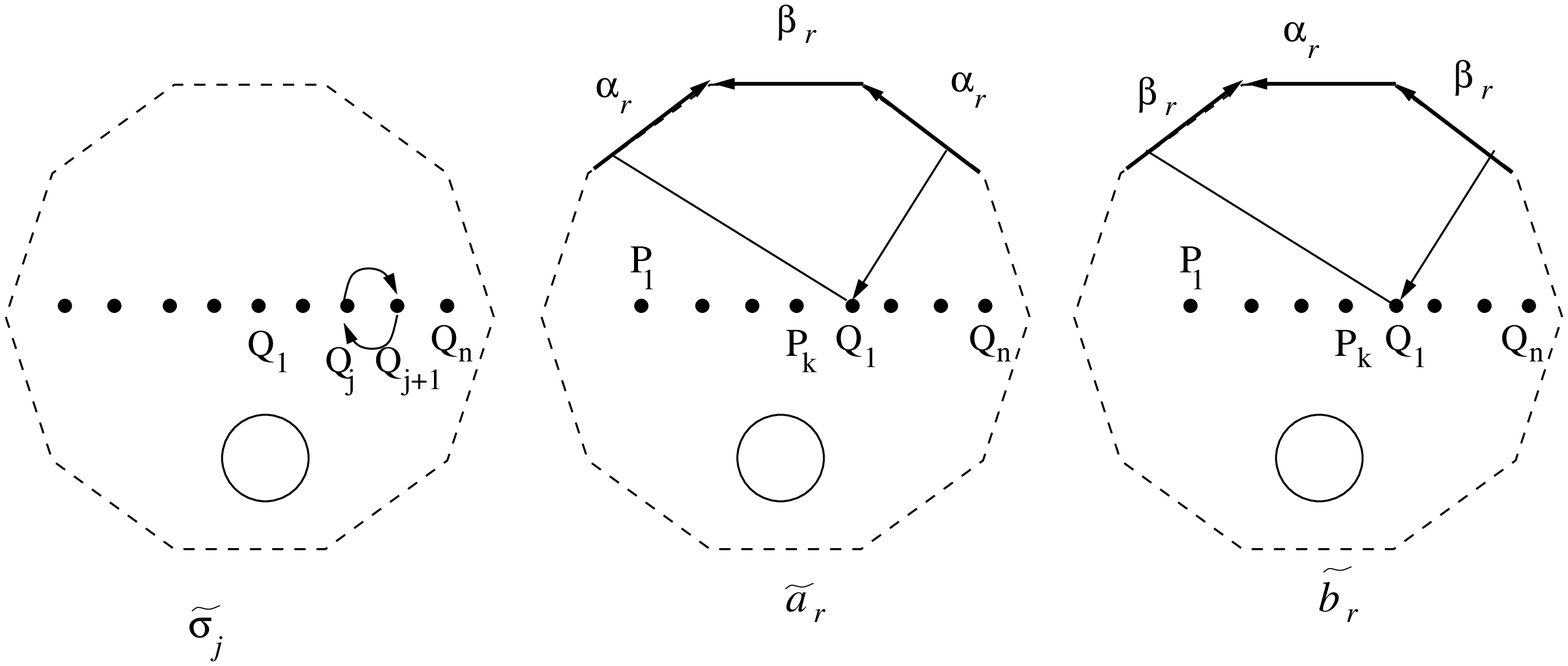}
 \caption{The generators $\tsi_1, \ldots, \tsi_{n-1}, \tia_1, \tib_1, \ldots, \tia_g, \tib_g$}\label{figure2mixed}
\end{figure}

\begin{figure}
 \centering
\includegraphics[width=9.7cm]{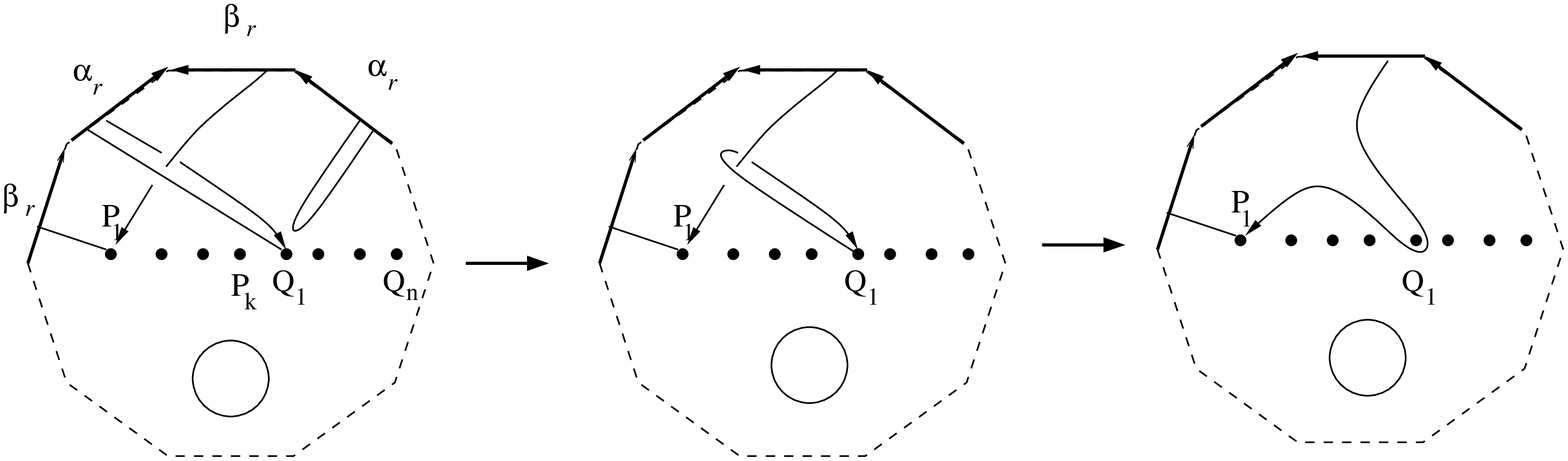}
 \caption{The braids $\tia_ib_i\tia_i^{-1}$ and  $b_i \z_1$ are isotopic.}\label{figure3mixed}
\end{figure}

\subsection{Abelian quotients of surface mixed braid groups}\label{sec:presquotients}

In this section, we use Propositions~\ref{presconnues} and~\ref{prop:presMB} to describe the Abelianisations of the (mixed) surface braid groups that arise in our study. We start with the case of mixed surface braid groups. We believe that the case $g=0$ is well known to the experts in the field, but since we were not able to find a reference in the literature, we provide a short proof.

\begin{prop}\label{lem:presgammaabel}
Let $n,k\geq 1$ and $g\geq 0$, let $\widehat{r}_{k,n}: \bsurf{k,n}{g}{}\to \bsurf{k,n}{g}{}/\Gamma_2(\bsurf{k,n}{g}{})$ denote the canonical projection (if $g=0$ then $\widehat{r}_{k,n}$ is the homomorphism $r_{k,n}$), and let
\begin{equation*}
\widehat{S} = \begin{cases}
\varnothing & \text{if $n=k=1$}\\
\brak{\si} & \text{if $k\geq 2$ and $n=1$}\\
\brak{\tsi} & \text{if $k=1$ and $n\geq 2$}\\
\brak{\si,\tsi} & \text{if $k,n\geq 2$}
\end{cases} \quad \text{and} \quad \widehat{Z} =
\begin{cases}
\brak{\z} & \text{if $g=0$}\\
\varnothing & \text{if $g\geq 1$.}
\end{cases}
\end{equation*}
Then the group $\bsurf{k,n}{g}{}/\Gamma_2(\bsurf{k,n}{g}{})$ admits the following group presentation:\\
\textbf{Generating set:} $\widehat{r}_{k,n}(S\cup\widetilde{S}\cup Z\cup AB\cup\widetilde{AB})=\widehat{S}\cup \widehat{Z} \cup AB \cup\widetilde{AB}$;\\
\textbf{Relations:}
\begin{enumerate}[\textbullet]
\item $xy = yx$ for $x,y\in \widehat{S}\cup \widehat{Z} \cup AB \cup\widetilde{AB}$, $x\neq y$;
\item if $g\geq 1$ then $s^2 = 1$ for all $s\in \widehat{S}$.
\end{enumerate}
In particular, $\widehat{r}_{k,n}(S)= \{\si\}$ if $k\geq 2$, $\widehat{r}_{k,n}(\widetilde{S})= \{\tsi\}$ if $n\geq 2$, $\widehat{r}_{k,n}(Z)= \{\z \}$ if $g=0$, and the group~$\bsurf{k,n}{g}{}/\Gamma_2(\bsurf{k,n}{g}{})$ is isomorphic to $\Z^{\lvert \widehat{S}\rvert +1}$ if $g=0$, and to 
$\Z^{4g}\times \Z_2^{\lvert\widehat{S}\rvert}$ if $g\geq 1$, where $\lvert\widehat{S}\rvert$ denotes the cardinal of $\widehat{S}$.
\end{prop}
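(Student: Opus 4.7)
The plan is to start from the presentation of $\bsurf{k,n}{g}{}$ given by Proposition~\ref{prop:presMB}, and to compute its Abelianisation by adjoining the relations $[x,y]=1$ for every pair of generators $x,y\in \Omega_{k,n}$. I would then examine each family of relations (a.1)--(a.8), (b.1)--(b.6), (c.1)--(c.8) and check what it becomes once all the generators commute: most of them collapse to a tautology, while a short list survives and yields precisely the identifications and torsion stated in the proposition.

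More concretely, I would work through the list as follows. The commutation/braid relations in (a) and (b) give no new information except for (a.2) and (b.2), which after commutativity become $\si_{i}=\si_{i+1}$ and $\tsi_{i}=\tsi_{i+1}$, so all $\si_i$ collapse to a single class $\si$ and all $\tsi_j$ to $\tsi$. Next, (a.4), (a.6), (a.7), (a.8) and their $\widetilde{}$\,counterparts become $c^{2}\si^{2}=c^{2}\si^{2}$ and similar tautologies. The first genuinely non-trivial relations are (a.5) and (b.5): rewriting $a_{i}\si_{1}b_{i}=\si_{1}b_{i}\si_{1}a_{i}\si_{1}$ additively modulo commutators gives $2\si=0$, and similarly (b.5) gives $2\tsi=0$; so $\si,\tsi$ are $2$-torsion as soon as the set $AB$ (respectively $\widetilde{AB}$) is non-empty, that is, when $g\geq 1$. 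For the action relations~(c), all conjugation relations (c.1), (c.2), (c.4)--(c.6), (c.8.2)--(c.8.3), and (c.7.2)--(c.7.3) become trivial after Abelianisation because every commutator appearing on the right-hand side is killed; (c.3.1) forces $\z_{i}=\z_{i+1}$, so all $\z_\ell$ collapse to a common class $\z$; and the relation (c.7.1) $\tia_{i}b_{i}\tia_{i}^{-1}=b_{i}\z_{1}$ becomes $\z=0$, forcing $\z$ to vanish whenever $g\geq 1$ (one can also use (c.8.1), which yields the same identity).

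Collecting what remains, the Abelianisation is generated by $\widehat{S}\cup \widehat{Z}\cup AB\cup \widetilde{AB}$ subject only to commutativity of all generators, together with $\si^{2}=1$ and $\tsi^{2}=1$ when $g\geq 1$; this is the presentation claimed. To conclude the proof, I would invoke the classification of finitely presented Abelian groups: when $g=0$, no torsion relations are present and the rank is $|\widehat{S}|+|\widehat{Z}|=|\widehat{S}|+1$, giving $\Z^{|\widehat{S}|+1}$; when $g\geq 1$, the $4g$ generators in $AB\cup\widetilde{AB}$ contribute the free part $\Z^{4g}$ while each element of $\widehat{S}$ contributes a factor $\Z_{2}$, producing $\Z^{4g}\times \Z_{2}^{|\widehat{S}|}$.

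The only step that requires some care rather than pure bookkeeping is verifying that no further torsion or identification is hidden inside the large family~(c): since each such relation is a conjugation identity of the form $g x g^{-1} = w(x,\ldots)$, one must check in each case that the right-hand side becomes $x$ after setting every commutator to $1$, with the sole exception of (c.7.1) (and the equivalent (c.8.1)). I would therefore organise the proof by displaying (c.3.1), (c.7.1) and the classical (a.2), (a.5), (b.2), (b.5) separately, and then explicitly list the remaining (c) relations as ``trivial after Abelianisation'', pointing out that every right-hand side is a conjugate of the left-hand side by a product of commutators of the known generators. Once this verification is in hand, the identification of the Abelian group is immediate.
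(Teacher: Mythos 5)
Your proposal is correct and follows essentially the same route as the paper: Abelianise the presentation of Proposition~\ref{prop:presMB}, use (a.2), (b.2) and (c.3) to collapse $S$, $\widetilde{S}$ and $Z$ to single classes, use (a.5) and (b.5) to obtain $\si^{2}=\tsi^{2}=1$ when $g\geq 1$, and use (c.7.1) to kill $\z$ when $g\geq 1$. The only cosmetic difference is that the paper separates the cases $g=0$ and $g\geq 1$ (and notes explicitly that for $k=1$ or $n=1$ the relevant relations simply do not exist), whereas you treat them uniformly; both are fine.
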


\begin{proof}
Consider the presentation of $\bsurf{k,n}{g}{}$ given by Proposition~\ref{prop:presMB}. To obtain a presentation of the quotient~$\bsurf{k,n}{g}{}/\Gamma_2(\bsurf{k,n}{g}{})$, we must add the relations of the form $xy = yx$ for all $x,y$ in $\Omega_{k,n}$ to the presentation of $\bsurf{k,n}{g}{}$. First suppose that $g=0$. Then none of the relations~(c) of Proposition~\ref{prop:presMB} exist, with the exception of relation~(c.3). The group $B_{k,n}=\bsurf{k,n}{0}{}$ is generated by $S\cup \widetilde{S} \cup Z$. Under Abelianisation, if $k\geq 2$ (resp.\ $n\geq 2$), the elements of $S$ (resp.\ $\widetilde{S}$, $Z$) are identified to a single element $\si$ (resp.\ $\tsi$, $\z$) by relation~(a.2) (resp.\ relation~(b.2), relation~(c.3)), so the given  generating set of $B_{k,n}$ reduces to the generating set $\widehat{S}\cup \widehat{Z}$ of $B_{k,n}/\Gamma_2(B_{k,n})$, and the only defining relations are commutation relations. So the statement holds in the case $g=0$. Assume now that $g\geq 1$. For the elements of $\widehat{S}\cup \widehat{Z}$, the same analysis holds as in the case $g=0$. Additionally, relation~(c.7.1) implies that $\widehat{r}_{k,n}(\z_{1})$ is trivial, so $\widehat{Z}=\varnothing$, and thus  the relations of type~(c) do not give any extra information in $\bsurf{k,n}{g}{}/\Gamma_2(\bsurf{k,n}{g}{})$. Under $\widehat{r}_{k,n}$, if $k\geq 2$ (resp.\ $n\geq 2$), relations~(a.1)--(a.8) (resp.\ relations~(b.1)--(b.6)) do not yield any new relations, with the exception of relation~(a.5) (resp.\ relation~(b.5)) that reduces to $\si^2 = 1$ (resp.\ $\tsi^2 = 1$) in $\bsurf{k,n}{g}{}/\Gamma_2(\bsurf{k,n}{g}{})$, and the result follows. If $k=1$ (resp.\ $n=1$) then relations~(a.1)--(a.8) (resp.\ relations~(b.1)--(b.6)) do not exist, and again we see that the statement holds, which completes the proof of the case $g\geq 1$.
\end{proof}


For $k,g\geq 1$ and $n\geq 0$, let $\widehat{r}_{k}: \bsurf{k}{g,n}{}\to \bsurf{k}{g,n}{}/\Gamma_2(\bsurf{k}{g,n}{})$ denote the canonical projection. It will be convenient to denote the disc $\D^2$ by $\D_0$, so $B_{k}(\D_0)=B_{k}$. Note once more that if $k=1$ (resp.\ $n=0$, $g=0$) then all references to the element $\si$ (resp.\ to the set $Z$, to the set $AB$) in the generators and relations should be suppressed.

\begin{prop}\label{lem:presbngabel} 
Let  $k\geq 1$ and $g,n\geq 0$. 
The group~$\bsurf{k}{g,n}{}/\Gamma_2(\bsurf{k}{g,n}{})$ admits the following presentation:\\
\noindent \textbf{Generating set:} $\widehat{r}_{k}(S\cup AB\cup Z)= \{\si\} \cup AB \cup Z$;\\
\noindent  \textbf{Relations:}
\begin{enumerate}[\textbullet]
\item $xy = yx$ for all $x,y\in \{\si\}\cup AB\cup Z$, $x\neq y$;
\item if $g\geq 1$ then $\si^2 = 1$.
\end{enumerate}
Consequently, $B_k(\D_n)/\Gamma_2(B_k(\D_n))$ is free Abelian on the set $\widehat{r}_{k}(S\cup Z)=\{\si\} \cup Z$, and if $g\geq 1$, the group $\bsurf{k}{g,n}{}/\Gamma_2(\bsurf{k}{g,n}{})$ is isomorphic to $\Z^{2g+n}\times \Z_2$ if $k\geq 2$ and to $\Z^{2g+n}$ if $k=1$.
\end{prop}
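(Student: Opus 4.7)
The plan is to mimic the proof of Proposition~\ref{lem:presgammaabel} by starting from the presentation of $\bsurf{k}{g,n}{}$ given in Proposition~\ref{presconnues}(i) and imposing the abelianising relations $xy = yx$ for every pair of generators $x,y \in S\cup AB\cup Z$. Then I will analyse each of the defining relations (a.1)--(a.8) in turn, show which ones become redundant under commutation, and identify the surviving non-commutation relations.

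First I would collapse the generating set. Relation (a.2), $\si_i\si_{i+1}\si_i = \si_{i+1}\si_i\si_{i+1}$, becomes $\si_i = \si_{i+1}$ once all generators commute, so all the $\si_i$ are identified with a single element $\si$ (which disappears entirely if $k=1$). No analogous identification occurs among $AB$ or $Z$, so the generating set of the quotient is exactly $\widehat{r}_k(S\cup AB\cup Z) = \{\si\}\cup AB\cup Z$, as claimed.

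Next I would check that relations (a.1), (a.3), (a.4), (a.6), (a.7), (a.8) become trivial consequences of commutation: relation (a.1) is already commutation of two $\si$'s; relation (a.3) is a commutation relation involving $\si_i$ and an element of $AB\cup Z$; relation (a.4) reads $c\si_1 c\si_1 = \si_1 c\si_1 c$, which is trivial once $c$ and $\si_1$ commute; and relations (a.6)--(a.8) similarly reduce to commutations since $\si_1^{-1}c\si_1 = c$ in the abelian quotient. The only relation that produces genuinely new information is (a.5), $a_i\si_1 b_i = \si_1 b_i \si_1 a_i\si_1$: in the abelian quotient this reads $a_ib_i\si = \si^3 a_ib_i$, that is, $\si^2 = 1$, and this relation is present precisely when $g\geq 1$. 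If $g=0$ the set $AB$ is empty and relation (a.5) does not exist, so $\si$ is of infinite order in the quotient.

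This establishes the presentation of $\bsurf{k}{g,n}{}/\Gamma_2(\bsurf{k}{g,n}{})$ stated in the proposition. The structural consequences then follow formally: for $g=0$ we obtain a free abelian group on $\{\si\}\cup Z$ (with $\si$ removed when $k=1$), giving $B_k(\D_n)/\Gamma_2(B_k(\D_n)) \cong \Z^{\lvert \{\si\}\cup Z\rvert}$; and for $g\geq 1$ we obtain $\Z^{2g+n}\times \Z_2$ when $k\geq 2$ (the $\Z_2$ factor coming from the relation $\si^2=1$) and $\Z^{2g+n}$ when $k=1$ (since then $\si$ is absent). No step is a real obstacle; the main care needed is in the bookkeeping of edge cases $k=1$, $n=0$, $g=0$, where the corresponding generators and relations are simply suppressed.
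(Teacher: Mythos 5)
Your proof is correct and follows exactly the route the paper itself indicates: the paper does not write out a separate argument but states that Proposition~\ref{lem:presbngabel} "follows in a similar manner to that of Proposition~\ref{lem:presgammaabel} using the presentation of $\bsurf{k}{g,n}{}$ given by Proposition~\ref{presconnues}", which is precisely your relation-by-relation analysis of (a.1)--(a.8). Your observations that only (a.2) collapses generators, that only (a.5) yields the non-trivial relation $\si^2=1$, and that the $\z_i$ are \emph{not} identified (in contrast with the mixed case, where the relations of type~(c) do the identifying) all agree with the paper.
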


If $n=0$, the result may be found in~\cite{BF}. The proof of Proposition~\ref{lem:presbngabel} follows in an similar manner to that of Proposition~\ref{lem:presgammaabel} using the presentation of $\bsurf{k}{g,n}{}$ given by Proposition~\ref{presconnues}. 


For $k\geq 1$ and $n\geq 0$, let $r_{k}: B_k(\D_n) \to B_k(\D_n)/\Gamma_2(B_{k}(\D_n))$ denote the canonical projection. We now apply Propositions~\ref{lem:presgammaabel} and~\ref{lem:presbngabel} to analyse the homomorphisms $\bar\iota_{k,n}: B_{k,n}/\Gamma_2(B_{k,n})\to B_{k,n}(\surf{g}{})/\Gamma_2(B_{k,n}(\surf{g}{}))$ and $\bar\iota_k: B_k(\D_n)/\Gamma_2(B_{k}(\D_n))\to\bsurf{k}{g,n}{}/\Gamma_2(\bsurf{k}{g,n}{})$ induced by the homomorphisms $\iota_{k,n}:B_{k,n}\to B_{k,n}(\surf{g}{})$ and $\iota_{k}: B_k(\D_n)\to\bsurf{k}{g,n}{}$ on the corresponding Abelianisations. These homomorphisms thus satisfy the relations $\widehat{r}_{k,n}\circ\iota_{k,n}= \bar\iota_{k,n} \circ r_{k,n}$ and $\widehat{r}_{k}\circ\iota_{k}= \bar\iota_{k} \circ r_{k}$.   


\begin{cor} \label{nolinearextK1}
Let $g,k\geq 1$, and let $G$ be an Abelian group.
\begin{enumerate}[(i)]
\item Let $n\geq 1$, and let $P_G: B_{k,n}(\surf{g}{}) \to G$ and $\overline{P_{G}}: B_{k,n}/\Gamma_2(B_{k,n}) \to G$ be homomorphisms for which~$P_G\circ \iota_{k,n}= \overline{P_{G}} \circ r_{k,n}$. Then $\overline{P_{G}}$ is not injective. In particular, the homomorphism $\bar\iota_{k,n}$ is not injective.

\item Let $n\geq 0$.
\begin{enumerate}[(a)]
\item If $k=1$ then the homomorphism $\bar\iota_{k}$ is injective.
\item Suppose that $k\geq 2$. Let $P_G: \bsurf{k}{g,n}{} \to G$ and $\overline{P_{G}}: B_k(\D_n)/\Gamma_2(B_{k}(\D_n))\to G$ be homomorphisms for which~$P_G\circ \iota_{k}= \overline{P_{G}} \circ r_{k}$. Then $\overline{P_{G}}$ is not injective. In particular, the homomorphism $\bar\iota_{k}$ is not injective.
\end{enumerate}
\end{enumerate}
%
%
%
%
\end{cor}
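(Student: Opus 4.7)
The plan is to reduce both non-injectivity statements to showing that the induced maps $\bar\iota_{k,n}$ and $\bar\iota_k$ on Abelianizations themselves have nontrivial kernels, and then to read off explicit kernel elements directly from Propositions~\ref{lem:presgammaabel} and~\ref{lem:presbngabel}.

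The reduction step uses the universal property of Abelianization: since $G$ is Abelian, $P_G$ factors uniquely as $P_G = \tilde P_G\circ \widehat{r}_{k,n}$ for some homomorphism $\tilde P_G: B_{k,n}(\surf{g}{})/\Gamma_2(B_{k,n}(\surf{g}{})) \to G$. Combining this with the hypothesis $P_G\circ \iota_{k,n} = \overline{P_G}\circ r_{k,n}$ and the surjectivity of $r_{k,n}$ yields $\overline{P_G} = \tilde P_G\circ \bar\iota_{k,n}$, so $\ker(\overline{P_G})\supseteq \ker(\bar\iota_{k,n})$. Hence it suffices to prove the ``in particular'' statements concerning $\bar\iota_{k,n}$ and $\bar\iota_k$; the assertions about general $\overline{P_G}$ are then automatic. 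An entirely parallel factorisation handles case~(ii)(b).

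For~(i), Proposition~\ref{lem:presgammaabel} applied with $g=0$ shows that $\widehat{r}_{k,n}(\z)$ generates a free $\Z$-summand in $B_{k,n}/\Gamma_2(B_{k,n})$, so it has infinite order. On the other hand, the same proposition applied with $g\geq 1$ yields $\widehat{Z} = \vide$ in $B_{k,n}(\surf{g}{})/\Gamma_2(B_{k,n}(\surf{g}{}))$, the reason being that relation~(c.7.1) of \repr{presMB} forces $\widehat{r}_{k,n}(\z_1)=1$, and relation~(c.3) then propagates this to every $\z_j$. Since $\iota_{k,n}$ carries each disc generator in $Z$ to the corresponding surface generator in $Z$, we obtain $\bar\iota_{k,n}(\widehat{r}_{k,n}(\z)) = \widehat{r}_{k,n}(\z_1) = 1$, so $\widehat{r}_{k,n}(\z) \in \ker(\bar\iota_{k,n})$ is nontrivial.

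For~(ii), the case $k=1$ is direct: Proposition~\ref{lem:presbngabel} identifies $B_1(\D_n)/\Gamma_2(B_1(\D_n))$ as free Abelian on $Z$ and $\bsurf{1}{g,n}{}/\Gamma_2(\bsurf{1}{g,n}{})$ as free Abelian on $AB\cup Z$, and $\bar\iota_1$ sends the basis $Z$ of the source onto the corresponding subset of the basis of the target, so it is injective. For $k\geq 2$, the source is free Abelian on $\brak{\si}\cup Z$, whereas the target satisfies $\si^2 = 1$ (because $g\geq 1$); thus $\si^2$ has infinite order in the source but is killed by $\bar\iota_k$, and the factorisation argument above yields a nontrivial kernel for $\overline{P_G}$. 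The main point requiring genuine verification (and therefore the principal obstacle) is the identification of the puncture-loop generators across the inclusions $\iota_{k,n}$ and $\iota_k$, i.e.\ that a disc generator $\z_j$ maps to the surface generator bearing the same name; this is geometrically transparent from Figure~\ref{figure1mixed} together with the chosen embedding of $\D^{2}$ into $\surf{g}{}$, but must be checked carefully in order to legitimately compare the presentations of Propositions~\ref{presconnues} and~\ref{prop:presMB}.
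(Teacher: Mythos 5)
Your proof is correct and follows essentially the same route as the paper's: both arguments factor $P_G$ through the Abelianisation of the surface (mixed) braid group using that $G$ is Abelian, and then exhibit the explicit kernel element $\z_1$ (resp.\ $\si_1^2$ for $k\geq 2$), which survives in $B_{k,n}/\Gamma_2(B_{k,n})$ (resp.\ $B_k(\D_n)/\Gamma_2(B_k(\D_n))$) but dies in the surface quotient by Propositions~\ref{lem:presgammaabel} and~\ref{lem:presbngabel}; the case $k=1$ is handled identically via the free Abelian direct factor generated by $Z$. The only difference is cosmetic: you perform the reduction to the ``in particular'' statement first, whereas the paper treats $\bar\iota_{k,n}$ first and then deduces the general case.
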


\begin{proof}\mbox{}
\begin{enumerate}[(i)]
\item We start by proving the non-injectivity of $\bar\iota_{k,n}$. Suppose that $G=\bsurf{k,n}{g}{}/\Gamma_2(\bsurf{k,n}{g}{})$ and $P_{G}=\widehat{r}_{k,n}$, and let $\overline{P_{G}}=\bar\iota_{k,n}$. Thus $\widehat{r}_{k,n}\circ\iota_{k,n}= \bar\iota_{k,n} \circ r_{k,n}$, and so $\bar\iota_{k,n}$ cannot be injective because $r_{k,n}(\z_1)\neq 1$ and $\widehat{r}_{k,n}(\z_1) = 1$ by Proposition~\ref{lem:presgammaabel}. We now consider the general case. Let $P_{G}$ and $\overline{P_{G}}$ be as in the statement. Since $G$ is Abelian, $P_{G}$ factors through $\widehat{r}_{k,n}$, so as in the case of $\bar\iota_{k,n}$, we have $P_{G}(\z_1)=1$ and $r_{k,n}(\z_1)\neq 1$, which implies the non-injectivity of $\overline{P_{G}}$ using the relation $P_G\circ \iota_{k,n}= \overline{P_{G}} \circ r_{k,n}$.

\item
\begin{enumerate}[(a)]
\item If $k=1$ then Proposition~\ref{lem:presbngabel} implies that $B_k(\D_n)/\Gamma_2(B_{k}(\D_n))$ is a free Abelian group generated by $Z$ ($Z$ may be empty if $n=0$), and that the homomorphism $\bar{\iota}_k$ identifies~$B_k(\D_n)/\Gamma_2(B_{k}(\D_n))$ with the direct factor of $\bsurf{k}{g,n}{}/\Gamma_2(\bsurf{k}{g,n}{})$ generated by $Z$, so $\bar{\iota}_k$ is injective.

\item If $k\geq 2$ then the argument is similar to that of part~(i), where we replace $\z_{1}$ by $\si_{1}^{2}$.\qedhere
\end{enumerate}
\end{enumerate}
\end{proof}

\subsection{Metabelian quotients of surface mixed braid groups}\label{sec:metaquotients}


In this section, the aim is to obtain results similar to those of Section~\ref{sec:presquotients}, but on the level of quotients by $\Gamma_{3}$ rather than by $\Gamma_{2}$. With Section~\ref{section4} in mind, our principal interest is in detecting the differences between these two types of quotient. 

The following result will provide the crucial argument in the proof of Theorems~\ref{main_thm} and~\ref{main_thm_2}.

\begin{prop} \label{prop:techresult}
Let  $k,n\geq 3$, and let $g\geq 0$. Let $H$ be a group, and let $\rho_H : B_{k,n}(\surf{g}{}) \to H$ be a surjective homomorphism. Consider the presentation of $B_{k,n}(\surf{g}{})$ given in Proposition~\ref{prop:presMB}, and let $R$ be a set of words on~$\Omega_{k,n}\cup \Omega_{k,n}^{-1}$ whose normal closure in $B_{k,n}(\surf{g}{})$ is equal to $\ker{(\rho_{H})}$. Assume that there exist $\si,\tsi$ in $H$ such that $$\rho_{H}(S)= \{\si\} \textit{ and } \rho_{H}(\widetilde{S})= \{\tsi\}.$$ Then the following assertions hold.
\begin{enumerate}[(i)]
\item There exists $\z\in H$ such that $\rho_{H}(Z)= \{\z\}$.
\item Let $R_\Omega$ denote the set of words obtained from $R$ by replacing each of the letters $\si_i$, $\tsi_i$ and $\z_i$ by $\si, \tsi$ and $\z$ respectively (and their inverses by $\si^{-1}$, $\tsi^{-1}$ and $\z^{-1}$ respectively). Then the group~$H$ possesses the following presentation:\\ \textbf{Generating set:} $\Omega =\rho_{H}(\Omega_{k,n}) =\{ \si, \tsi, \zeta\}\cup AB \cup\widetilde{AB}$;\\
\textbf{Relations:}
\begin{enumerate}[\textbullet]
\item $w = 1$ for all $w\in R_\Omega$;
\item $xy = yx$ for all $x,y\in \Omega$, $x\neq y$, and $\{x,y\}\notin \left\{\{a_i, b_i\},  \{\tia_i,\tib_i\}, \{b_i,\tia_i\}, \{\tib_i,a_i\} ; i=1,
\ldots, g\right\}$;
\item $[a_i,  b_i]=\sigma^2; \quad [\tia_i,  \tib_i]=\tsi^2; \quad  [a_i,  \tib_i]=[\tia_i,b_i]= \zeta \text{ for  } i=1,
\ldots, g$.
\end{enumerate}
\end{enumerate}
\end{prop}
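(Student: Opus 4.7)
I would argue using relations~(c.3.1) and~(c.3.3) of \repr{presMB}. Since $n\geq 3$, relation~(c.3.3) with $i=1$ gives, after applying $\rho_H$ and using $\rho_H(\widetilde{S})=\{\tsi\}$, that $\tsi$ commutes with $\rho_H(\zeta_j)$ for every $j\geq 3$, and in particular with $\rho_H(\zeta_n)$. Then iterating relation~(c.3.1) downwards from $i=n-1$ to $i=1$ yields
\[\rho_H(\zeta_j)=\tsi^{\,n-j}\rho_H(\zeta_n)\,\tsi^{-(n-j)}=\rho_H(\zeta_n)\qquad\text{for every }j,\]
so I may set $\zeta:=\rho_H(\zeta_n)$.

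\textbf{Part~(ii), strategy.} Let $H'$ denote the group with the presentation claimed in~(ii). I would construct mutually inverse homomorphisms $\phi\colon H'\to H$ and $\overline{\psi}\colon H\to H'$. The map $\phi$ sends $\sigma\mapsto\rho_H(\sigma_1)$, $\tsi\mapsto\rho_H(\tsi_1)$, $\zeta\mapsto\rho_H(\zeta_1)$, and each $a_i,b_i,\tia_i,\tib_i$ to its $\rho_H$-image. The map $\overline{\psi}$ is induced by $\psi\colon B_{k,n}(\surf{g}{})\to H'$ that sends $\sigma_i,\tsi_i,\zeta_i$ to $\sigma,\tsi,\zeta$ respectively and fixes the generators in $AB\cup\widetilde{AB}$. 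By construction, $\psi$ sends each word of $R$ to the corresponding word of $R_\Omega$, which is trivial in $H'$, so $\psi$ factors through $H$. The well-definedness of $\phi$ and of $\psi$ then reduces to a single common verification: after the substitutions $\sigma_i\mapsto\sigma$, $\tsi_i\mapsto\tsi$, $\zeta_i\mapsto\zeta$, the relations~(a), (b), (c) of \repr{presMB} and the commutation/commutator relations defining $H'$ normally generate the same subgroup of the free group on $\Omega$.

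\textbf{Chain of derivations.} I would carry out this verification in the following order, using $k,n\geq 3$ crucially to access indices $i\geq 2$ at which the relations~(a.3) and~(b.3) collapse to honest commutations. First, (a.3) and~(b.3) with $i\geq 2$ give $[\sigma,c]=1$ for $c\in AB\cup\{\zeta\}$ and $[\tsi,\tic]=1$ for $\tic\in\widetilde{AB}$; (c.1) and~(c.2) provide the remaining commutations of $\sigma$ and $\tsi$ with the ``opposite'' generators; and the argument of part~(i) gives $[\tsi,\zeta]=1$. Second, once $[\sigma,\zeta]=1$, relation~(a.7) simplifies to $[\zeta,c]=1$ for $c\in AB$, and with this in hand relations~(c.4.1)--(c.4.2) collapse to $[\tia_i,\zeta]=[\tib_i,\zeta]=1$, so that $\zeta$ becomes central in the subgroup generated by $\{\sigma,\tsi\}\cup AB\cup\widetilde{AB}$. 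Third, exploiting this centrality, relations~(a.5), (b.5), (c.7.1) and~(c.8.1) contract respectively to $[a_i,b_i]=\sigma^2$, $[\tia_i,\tib_i]=\tsi^2$, $[\tia_i,b_i]=\zeta$ and $[a_i,\tib_i]=\zeta$; the remaining parts of~(a.6), (b.6) and~(c.3)--(c.8) reduce either to commutation of the non-exceptional pairs listed in the statement, or to identities already obtained. The same chain, read in reverse, shows that every relation of \repr{presMB} is a consequence of the commutation and commutator relations of $H'$, which is exactly what is needed for $\psi$ to be well-defined.

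\textbf{Main obstacle.} The genuine difficulty is organizational rather than conceptual: none of the relations~(a), (b), (c) is by itself a commutator identity of the target form, so the simplifications must be chained in the correct order. The essential leverage is the hypothesis $k,n\geq 3$, without which the ``free'' commutations of $\sigma$ with $AB\cup\{\zeta\}$ and of $\tsi$ with $\widetilde{AB}$ provided by (a.3) and~(b.3) at indices $\geq 2$ would be unavailable, and none of the subsequent (c)-simplifications leading to the centrality of $\zeta$ would go through.
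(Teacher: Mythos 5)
Your proof is correct and follows essentially the same route as the paper's: the same relation-by-relation reduction of the presentation of \repr{presMB} under the identifications $\si_i\mapsto\si$, $\tsi_i\mapsto\tsi$, $\z_i\mapsto\z$ (with the commutations $[\si,c]=1$ from~(a.3), (b.3) at indices $\geq 2$ as the key lever), merely packaged as a pair of mutually inverse homomorphisms rather than as a direct rewriting of the presentation. Your part~(i) differs only cosmetically — you anchor at $\z_n$ via~(c.3.3) with $i=1$ and propagate downwards through~(c.3.1), whereas the paper compares~(c.3.1) and~(c.3.3) index by index — and both hinge on $n\geq 3$ in the same way.
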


In the above statement, we recall once more that if $g=0$ then $\Omega=\{ \si, \tsi, \zeta\}$, and that any relations involving the elements of the set $AB\cup \widetilde{AB}$ should be suppressed.

\pagebreak

\begin{proof}\mbox{}
\begin{enumerate}[(i)]
\item We analyse the images under $\rho_{H}$ of the relations of $B_{k,n}(\surf{g}{})$ given by the presentation of Proposition~\ref{prop:presMB}. Under $\rho_{H}$, for all $3\leq j\leq n$, relation~(c.3.3) with $i=j-2$ becomes $\tsi \rho_{H}(\z_{j}) \tsi^{-1}= \rho_{H}(\z_{j})$ and relation~(c.3.1) with $i=j-1$ becomes $\tsi \rho_{H}(\z_{j}) \tsi^{-1}= \rho_{H}(\z_{j-1})$, hence $\rho_{H}(\z_{j})=\rho_{H}(\z_{j-1})$, and thus $\rho_{H}(\z_{2})=\cdots=\rho_{H}(\z_{n})$. Relation~(c.3.2) with $i=1$ yields $\tsi \rho_{H}(\z_{1})\tsi^{-1} = \rho_{H}(\z_{1})^{-1}\rho_{H}(\z_{2})\rho_{H}(\z_{1})$, and relation~(c.3.3) with $i=2$ and $j=1$ (recall that $n\geq 3$) becomes $\tsi\rho_{H}(\z_{1})\tsi^{-1} =\rho_{H}(\z_{1})$. Thus $\rho_{H}(\z_{1})=\rho_{H}(\z_{2})$, and all of the $\z_i$ have a common image~$\z$ under $\rho_{H}$.

\item By the assumption on the set $R$ and the presentation of~$\bsurf{k,n}{g}{}$, the group~$H$ has a group presentation with $\Omega_{k,n}$ as a generating set, and whose defining relations are obtained by adding the relations of the form $w = 1$ for all $w$ in $R$ to those given in the presentation of Proposition~\ref{prop:presMB}. But by hypothesis, $\rho_{H}(S)= \{\si\}$ and $\rho_{H}(\widetilde{S})= \{\tsi\}$, so $\rho_{H}(Z)= \{\z\}$ by part~(i). Hence we obtain a new presentation of $H$ by replacing $\Omega_{k,n}$ by $\Omega$ and $\si_i$, $\tsi_{i}$ and $\z_{i}$ by $\si$, $\tsi$ and $\z$ respectively in all of the defining relations. Let us show that these relations reduce to those given in the statement. First, the relations obtained from~(a.1), (a.2), (b.1) and~(b.2) may be removed since they are satisfied trivially. The relations~(c.3) become $[\tsi,\z] = 1$. Since $k,n\geq 3$, relations~(a.3) and~(b.3) yield $[\si,a_i] = [\si,b_i] = [\si,\z] = [\tsi,\tia_i] = [\tsi,\tib_i] = 1$, which implies that the relations~(a.4),~(a.8) and (b.4) may be removed. The relations~(a.6) become $[a_i,a_j] = [a_i,b_j] = [b_i,b_j] = [b_i,a_j]=1$ for all $i<j$,  the relations~(b.6) become $[\tia_i,\tia_j] = [\tia_i,\tib_j] = [\tib_i,\tib_j] = [\tib_i,\tia_j]=1$ for all $i<j$, the relations~(a.7) become~$[\z,a_i] = [\z,b_i] = 1$, and the relations~(a.5) and~(b.5) become $[a_i,  b_i]=\sigma^2$ and $[\tia_i,  \tib_i]=\tsi^2$  respectively. The relations~(c.1) and~(c.2) reduce to~$[\si,\tia_i] = [\si,\tib_i] =[\tsi,a_i] =[\tsi,b_i] =[\si,\tsi]= 1$, the relations~(c.4) become $[\z,\tia_i] = [\z,\tib_i] = 1$, and for all $i<j$, the relations~(c.5) reduce to $[\tia_{i},a_{j}] = 1$, and the relations~(c.6) become $[\tib_{i},b_{j}] = 1$.  The relations~(c.7.1) and~(c.8.1), which only exist if $g\geq 1,$, yield  $[\tia_i,b_i]= \z$ and $[\tib_i,  a_i] = \z^{-1}$ respectively, the latter being equivalent to $[a_i,\tib_i] = \z$. The other relations of type~(c.7) and~(c.8) reduce to $[\tia_i,b_j] =  [a_i,\tib_j]=1$ for all $i\neq j$, and so we obtain the required presentation.\qedhere
\end{enumerate}
\end{proof}

As a consequence of Proposition~\ref{prop:techresult}, we may exhibit a presentation of $\bsurf{k,n}{g}{}/\Gamma_3(\bsurf{k,n}{g}{})$, which will allow us later to decompose this quotient group as a semi-direct product.

\begin{prop}\label{lem:presgamma}
Let  $k,n\geq 3$ and $g\geq 0$. Let $\rho_{k,n} :\bsurf{k,n}{g}{}\to \bsurf{k,n}{g}{}/\Gamma_3(\bsurf{k,n}{g}{})$ denote the canonical projection. The group $\bsurf{k,n}{g}{}/\Gamma_3(\bsurf{k,n}{g}{})$ admits the following group presentation:\\
\textbf{Generating set:} $\Omega = \rho_{k,n}(\Omega_{k,n})=\{ \si, \tsi, \zeta\}\cup AB \cup\widetilde{AB}$;\\
\textbf{Relations:}
\begin{enumerate}[(a)]
\item $xy = yx$ for all $x,y\in \Omega$, $x\neq y$, and $\{x,y\}\notin \bigl\{\{a_i, b_i\},  \{\tia_j,\tib_j\}, \{b_i,\tia_i\}, \{\tib_i,a_i\} ; i=1,\ldots, g\bigr\}$;
\item $[a_i,  b_i]=\sigma^2$ and $[\tia_i,  \tib_i]=\tsi^2$; $[a_i,  \tib_i] = [\tia_i,b_i]= \zeta \text{ for  all } i=1,
\ldots, g$.
\end{enumerate}
In particular, $\rho_{k,n}(S)= \{\si\}$, $\rho_{k,n}(\widetilde{S})= \{\tsi\}$ and $\rho_{k,n}(Z)= \{\z\}$. 
\end{prop}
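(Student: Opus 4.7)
The plan is to apply Proposition~\ref{prop:techresult} to the surjection $\rho_{k,n}\colon \bsurf{k,n}{g}{}\to H$, where $H=\bsurf{k,n}{g}{}/\Gamma_3(\bsurf{k,n}{g}{})$, taking for $R$ the set of triple commutators $\{[[x,y],z] : x,y,z\in\Omega_{k,n}\}$. To justify this choice of $R$, I would first note that its normal closure in $\bsurf{k,n}{g}{}$ is exactly $\Gamma_3(\bsurf{k,n}{g}{})$: the inclusion $\subseteq$ is obvious, and the reverse inclusion follows because in the quotient by this normal closure, every basic commutator $[x,y]$ of generators becomes central, so the full commutator subgroup of the quotient is central, i.e.\ the quotient is metabelian. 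Once this is granted, the hypothesis of Proposition~\ref{prop:techresult} to verify is that $\rho_{k,n}(S)$ and $\rho_{k,n}(\widetilde{S})$ are each singletons.

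The key technical input is the verification of this hypothesis. Since $H$ is metabelian, $[H,H]$ is central in $H$, so for any $x,y\in H$ the identity $xy=[x,y]\,yx$ with $[x,y]$ central yields $xyx=[x,y]\,y\,x^{2}$ and $yxy=[x,y]\,y^{2}\,x$. Thus the braid relation $xyx=yxy$ forces $yx^{2}=y^{2}x$, i.e.\ $x=y$. Applied to the images of consecutive Artin generators, relation~(a.2) of Proposition~\ref{prop:presMB} yields $\rho_{k,n}(\si_{i})=\rho_{k,n}(\si_{i+1})$ for $1\le i\le k-2$, hence $\rho_{k,n}(S)=\{\si\}$ for a single element $\si\in H$; the same argument applied to relation~(b.2) gives $\rho_{k,n}(\widetilde{S})=\{\tsi\}$.

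With the hypothesis checked, part~(ii) of Proposition~\ref{prop:techresult} provides a presentation of $H$ with generating set $\Omega$ whose defining relations consist of the commutator relations~(a) and~(b) listed in the statement, together with the image $R_{\Omega}=\{[[x,y],z]=1 : x,y,z\in\Omega\}$ of $R$. It remains to show that the relations from $R_{\Omega}$ are redundant. Let $G''$ denote the group presented by~(a) and~(b) alone. Inspecting the list of exceptional pairs in~(a), none of $\si$, $\tsi$ or $\z$ appears in any such pair, so each of them commutes in $G''$ with every other element of $\Omega$; therefore $\si,\tsi,\z$ and (a fortiori) $\si^{2}$ and $\tsi^{2}$ all lie in the centre of $G''$.

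Consequently every basic commutator $[x,y]$ with $x,y\in\Omega$ takes a value ($1$, $\si^{2}$, $\tsi^{2}$ or $\z$) lying in the centre of $G''$, so the whole commutator subgroup $[G'',G'']$ is central, which means $\Gamma_3(G'')=1$. Hence every triple-commutator relation in $R_{\Omega}$ already holds in $G''$, so the canonical surjection $G''\to H$ induced by the identity on $\Omega$ is also injective and thus an isomorphism. The main obstacle is the short metabelian braid computation of the second paragraph; once that small lemma is established, the rest is bookkeeping with Proposition~\ref{prop:techresult} together with the routine verification that~(a)+(b) already imply the metabelian identity $\Gamma_3=1$.
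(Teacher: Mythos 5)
Your proposal is correct and follows essentially the same strategy as the paper: verify via the braid relations that consecutive $\si_i$ (resp.\ $\tsi_i$) have a common image in the class-$2$ quotient, invoke Proposition~\ref{prop:techresult}, and then observe that the triple-commutator relations are redundant because the group presented by (a) and (b) alone visibly has central commutator subgroup. The one small divergence is that you take $R$ to consist of triple commutators of \emph{generators} only (correctly justified by the normal-closure argument), which lets you avoid the paper's Witt--Hall induction over arbitrary words; also note that the property you actually use is that $H=G/\Gamma_3(G)$ is nilpotent of class at most $2$ (i.e.\ $[H,H]\subset Z(H)$), which is stronger than metabelian, though the conclusion you draw from it is valid.
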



\begin{proof}
We apply Proposition~\ref{prop:techresult} with $H = \bsurf{k,n}{g}{}/\Gamma_3(\bsurf{k,n}{g}{})$ and $\rho_H = \rho_{k,n}$. We start by showing that the hypotheses of this proposition are indeed satisfied. By definition the normal subgroup $\Gamma_3(\bsurf{k,n}{g}{})$ is generated by the infinite set of elements of the form $[g_1,[g_2,g_3]]$ where $g_1,g_2,g_3$ range over the elements of $\bsurf{k,n}{g}{}$. We have $[\si_i,[\si_{i+1},\si_i]] = \si_i\si_{i+1}\si_i\si_{i+1}^{-1}\si_i^{-1}\si_{i}^{-1}\si_{i}\si_{i+1}\si_{i}^{-1}\si_{i+1}^{-1} =$ $ \si_{i+1}\si_i\si_{i+1}\si_{i+1}^{-1}\si_i^{-1}\si_{i+1}\si_{i}^{-1}\si_{i+1}^{-1}= \si^2_{i+1}\si_{i}^{-1}\si_{i+1}^{-1}$, for all~$i\in \{1,\ldots,k-2\}$. Since $\rho_{k,n}([\si_i,[\si_{i+1},\si_i]]) = 1$, it follows that $(\rho_{k,n}(\si_{i+1}))^2 = \rho_{k,n}(\si_{i})\rho_{k,n}(\si_{i+1})$, so $\rho_{k,n}(\si_{i+1}) = \rho_{k,n}(\si_{i})$, and hence all of the $\si_i$ have a common image under $\rho_{k,n}$ that we denote by~$\si$. Similarly, all of the $\tsi_i$ have a common image~$\tsi$ under $\rho_{k,n}$. Applying Proposition~\ref{prop:techresult}, the group~$\bsurf{k,n}{g}{}/\Gamma_3(\bsurf{k,n}{g}{})$ admits the presentation given in that proposition. The relations of the form $w=1$ for all $w\in R_{\Omega}$ correspond to the relations $[x,[y,z]] = 1$, where $x,y,z$ range over all the words on~$\Omega\cup\Omega^{-1}$. Comparing this presentation with that given in the statement of the corollary, it suffices to show that these relations are consequences of the  relations~(a) and~(b) given in the statement of the corollary. Let $\equiv$ denote the equivalence relation on words on $\Omega\cup\Omega^{-1}$ generated by the defining relations~(a) and~(b), and
let $x,y,z$ be words on~$\Omega\cup\Omega^{-1}$. We wish to show that $[x,[y,z]]\equiv 1$, where $1$ denotes the empty word. Applying the Witt-Hall commutator identities~\cite[Theorem~5.1(9) and~(10)]{MKS}, induction on word length, and relations~(a) and~(b), we see that $[y,z]$ commutes with all words on~$\Omega\cup\Omega^{-1}$, and so $[x,[y,z]]\equiv 1$ as required.
\end{proof}


\begin{cor}\label{mixedAbelianisation}\mbox{}
Let $k, n \ge 3$. The  group~$B_{k,n}/\Gamma_3(B_{k,n})$ coincides with~$B_{k,n}/\Gamma_2(B_{k,n})$.
\end{cor}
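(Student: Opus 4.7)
The plan is to derive this as an immediate consequence of Proposition~\ref{lem:presgamma} specialised to the case $g=0$. In that case the sets $AB$ and $\widetilde{AB}$ are empty, so the generating set $\Omega$ of $B_{k,n}/\Gamma_{3}(B_{k,n})$ reduces to $\{\si,\tsi,\z\}$. The relations of type~(b) in Proposition~\ref{lem:presgamma} all involve generators of $AB\cup\widetilde{AB}$ and are therefore vacuous, while the relations of type~(a) with $g=0$ assert only that the three generators $\si$, $\tsi$ and $\z$ pairwise commute (since the excluded pairs all lie in $AB\cup\widetilde{AB}$, hence are empty as well).

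Consequently $B_{k,n}/\Gamma_{3}(B_{k,n})$ is already abelian; in particular it is isomorphic to $\Z^{3}$, matching the description of $B_{k,n}/\Gamma_{2}(B_{k,n})$ from Proposition~\ref{lem:presgammaabel} with $g=0$ and $k,n\geq 2$. The fact that $B_{k,n}/\Gamma_{3}(B_{k,n})$ is abelian means $\Gamma_{2}(B_{k,n})\subseteq \Gamma_{3}(B_{k,n})$, and combined with the trivial reverse inclusion $\Gamma_{3}(B_{k,n})\subseteq \Gamma_{2}(B_{k,n})$, this yields the equality $\Gamma_{2}(B_{k,n}) = \Gamma_{3}(B_{k,n})$, from which the identification of the two quotients follows. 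There is no real obstacle here beyond checking that the restriction of the presentation in Proposition~\ref{lem:presgamma} to $g=0$ collapses as claimed; the genuine content of the corollary lies in Proposition~\ref{lem:presgamma} itself, and once that is in hand, the proof amounts to reading off what the $g=0$ specialisation says.
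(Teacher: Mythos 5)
Your argument is correct and is essentially identical to the paper's proof: both specialise Proposition~\ref{lem:presgamma} to $g=0$ to conclude that $B_{k,n}/\Gamma_3(B_{k,n})\cong\Z^3$ is Abelian, whence $\Gamma_2(B_{k,n})\subseteq\Gamma_3(B_{k,n})$, and the reverse inclusion holds by definition.
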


\begin{proof}
Taking $g=0$ in Proposition~\ref{lem:presgamma}, we see that the group~$B_{k,n}/\Gamma_3(B_{k,n})$ is isomorphic to~$\Z^3$, and is therefore Abelian. Thus $\Gamma_2(B_{k,n})$ is a subgroup of~$\Gamma_3(B_{k,n})$, and since the converse inclusion holds by definition, we conclude that~$\Gamma_3(B_{k,n}) = \Gamma_2(B_{k,n})$.
\end{proof}

With respect to Corollary~\ref{mixedAbelianisation}, one may ask whether $\Gamma_3(B_{k,n})=\Gamma_2(B_{k,n})$ if $k\leq 2$ or if $n\leq 2$. If $k=1$ (resp.\ $n=1$), it can be checked easily using Proposition~\ref{prop:presMB} (see also~\cite{GG5}) that the group~$B_{1,n}$ (resp.\ $B_{k,1}$) is isomorphic to the $m$-string braid group $B_{m}(\surf{0}{, 1})$ of the annulus, where $m=n$ (resp.\ $m=k$). If $m=1$ then $B_{m}(\surf{0}{, 1})\cong \Z$, and $\Gamma_2(B_m(\surf{0}{, 1})) = \Gamma_3(B_m(\surf{0}{, 1}))$ trivially. If $m\ge 3$ then $\Gamma_2(B_m(\surf{0}{, 1})) = \Gamma_3(B_m(\surf{0}{, 1}))$~\cite{BGG,GG5}. So suppose that $m=2$. Then $\Gamma_2(B_2(\surf{0}{, 1}))/\Gamma_3(B_2(\surf{0}{, 1}))\cong \Z_{2}$, and in fact $B_2(\surf{0}{, 1})$ is residually nilpotent~\cite{GG5}. This deals with the cases where one of $k$ and $n$ is equal to $1$. The only remaining case is that of $k=n=2$. We do not know whether the natural surjection $B_{2,2}/\Gamma_3(B_{2,2}) \to B_{2,2}/\Gamma_2(B_{2,2})$ is injective (in particular there is no reason for the images of $\zeta_1$ and $\sigma_1$ in $B_{2,2}/\Gamma_3(B_{2,2})$ to commute). Note however that if $\Gamma_2(B_{2,2})/\Gamma_3(B_{2,2})$ were non trivial then it would be isomorphic to the direct sum of a finite (non-zero) number of copies of $\Z_{2}$~\cite{GG5}.

\medskip

If $g\geq 1$, Proposition~\ref{lem:presgamma} allows us to give a more precise description of $\bsurf{k,n}{g}{}/\Gamma_3(\bsurf{k,n}{g}{})$.

\begin{cor}\label{cor:decgammamix}
Let  $k,n\geq 3$, and let $g\geq 1$.
\begin{enumerate}[(i)]
\item The group $\bsurf{k,n}{g}{}/\Gamma_3(\bsurf{k,n}{g}{})$ is isomorphic to a semi-direct product of the form $\left(\Z^3  \times \Z^{2g} \right) \rtimes \Z^{2g}$, the first factor $\Z^3$ being generated by $\{\sigma, \tsi, \zeta\}$, the second factor $\Z^{2g}$
by $\{ a_1, \ldots, a_g, \tia_1, \ldots,  \tia_g \}$, and the third factor $\Z^{2g}$ by 
$\{ b_1, \ldots, b_g, \tib_1, \ldots,  \tib_g \}$.
\item Any element~$\gamma$ of $\bsurf{k,n}{g}{}/\Gamma_3(\bsurf{k,n}{g}{})$ may be written uniquely in the form:
\begin{equation}\label{eq:gamma}
\gamma = \sigma^p \tsi^q \zeta^r  \prod_{i=1}^g a_i^{m_i}  \tia_i^{\mskip 4mu\widetilde{m}_i} \prod_{i=1}^g b_i^{n_i}  \tib_i^{\mskip 4mu\widetilde{n}_i}, \quad\text{where $p,q,r,m_{i},\widetilde{m}_i,n_{i},\widetilde{n}_i\in \Z$}.
\end{equation}
\item The centre of the group $\bsurf{k,n}{g}{}/\Gamma_3(\bsurf{k,n}{g}{})$ is  the free Abelian group of rank three generated by~$\si,\tsi$ and $\z$. 
\end{enumerate}
\end{cor}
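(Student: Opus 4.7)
The plan is to use the presentation of $\bsurf{k,n}{g}{}/\Gamma_{3}(\bsurf{k,n}{g}{})$ given by Proposition~\ref{lem:presgamma} and to compare it with an explicitly-constructed model $G$ of the semi-direct product claimed in~(i). I will build $G$, then a surjection $\phi:\bsurf{k,n}{g}{}/\Gamma_{3}(\bsurf{k,n}{g}{})\to G$, and finally show $\phi$ is an isomorphism by establishing a normal form in the source.

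I construct $G$ as follows. Let $N = \Z^{3}\times \Z^{2g}$, with central factor on basis $\{\sigma,\tsi,\zeta\}$ and second factor on basis $\{a_{1},\ldots,a_{g},\tia_{1},\ldots,\tia_{g}\}$, and let $Q = \Z^{2g}$ have basis $\{b_{1},\ldots,b_{g},\tib_{1},\ldots,\tib_{g}\}$. Define an action of $Q$ on $N$ which is trivial on the central $\Z^{3}$ and, on the second factor, satisfies $b_{i}\cdot a_{i} = \sigma^{-2}a_{i}$, $b_{i}\cdot \tia_{i} = \zeta^{-1}\tia_{i}$, $\tib_{i}\cdot a_{i} = \zeta^{-1}a_{i}$, $\tib_{i}\cdot \tia_{i} = \tsi^{-2}\tia_{i}$, fixing every other basis element. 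Since all correction factors are central in $N$, each generator of $Q$ acts by a well-defined automorphism of $N$, and a direct check shows that the different generators pairwise commute in $\operatorname{Aut}(N)$, so the assignment extends to a homomorphism $Q\to \operatorname{Aut}(N)$; put $G := N\rtimes Q$. Computing in $G$ gives $[a_{i},b_{i}] = \sigma^{2}$, $[\tia_{i},\tib_{i}] = \tsi^{2}$, and $[a_{i},\tib_{i}] = [\tia_{i},b_{i}] = \zeta$, so the map sending each generator of $\bsurf{k,n}{g}{}/\Gamma_{3}(\bsurf{k,n}{g}{})$ to its namesake in $G$ respects all the relations of Proposition~\ref{lem:presgamma}, yielding a surjective homomorphism $\phi$.

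The crux is to prove injectivity. I will show that every element of $\bsurf{k,n}{g}{}/\Gamma_{3}(\bsurf{k,n}{g}{})$ admits a writing of the form~\eqref{eq:gamma}: centrality of $\sigma,\tsi,\zeta$ (from relations~(a)) allows them to be swept to the left; relations~(b), together with centrality of the correction factors, allow each letter $b_{j}^{\pm 1}$ or $\tib_{j}^{\pm 1}$ to be moved to the right of every $a_{i}^{\pm 1}$ or $\tia_{i}^{\pm 1}$ at the cost of accumulating further central letters; and the remaining commutation rules sort the $a$- and $b$-blocks into the prescribed order. Under $\phi$, such a writing maps to the standard normal form of an element of the semi-direct product $G$, which is unique. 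Hence $\phi$ is injective, the exponents in~\eqref{eq:gamma} are uniquely determined, and parts~(i) and~(ii) follow. The main obstacle is the bookkeeping of central correction factors during this normal-form reduction, which must be carried out carefully so that the resulting central prefix is well defined.

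Finally, for~(iii), the elements $\sigma,\tsi,\zeta$ are central by relations~(a), and their images in $G$ generate a free abelian subgroup of rank three, giving one inclusion. Conversely, for $\gamma$ written in the form~\eqref{eq:gamma}, the explicit action gives $b_{j}\gamma b_{j}^{-1} = \sigma^{-2m_{j}}\zeta^{-\widetilde{m}_{j}}\gamma$ and $a_{j}\gamma a_{j}^{-1} = \sigma^{2n_{j}}\zeta^{\widetilde{n}_{j}}\gamma$, so requiring $\gamma$ to be central forces $m_{j}=\widetilde{m}_{j}=n_{j}=\widetilde{n}_{j}=0$ for every $j$, whence $\gamma = \sigma^{p}\tsi^{q}\zeta^{r}$.
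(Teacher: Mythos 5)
Your proposal is correct and follows essentially the same route as the paper: the paper's proof of part~(i) constructs exactly the semi-direct product $F\!A(U)\rtimes_{\phi}F\!A(V)$ with the same action on the $a_i,\tia_i$ and appeals to the standard presentation of a semi-direct product to identify it with the group presented in Proposition~\ref{lem:presgamma}, while your normal-form/von Dyck argument just makes that standard citation explicit; your proof of~(iii) via the commutators $[\gamma,b_j]$ and $[\gamma,a_j]$ is the paper's argument verbatim. (Incidentally, your value $\tib_i\cdot\tia_i=\tsi^{-2}\tia_i$ is the one forced by the relation $[\tia_i,\tib_i]=\tsi^2$; the paper's displayed formula for $\phi$ writes $\si^{-2}$ there, which appears to be a typo.)
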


\begin{proof}\mbox{}
\begin{enumerate}[(i)]
\item 
%
For a set $X$, let $F\!A(X)$ denote the free Abelian group on $X$. Let $U= \{\sigma, \tsi, \zeta, a_1, \ldots, a_g, \tia_1, \ldots,  \tia_g \}$ and let $V=\{b_1, \ldots, b_g, \tib_1, \ldots, \tib_g \}$. One may check that the map $\phi : V\to \operatorname{Aut}(F\!A(U))$ defined by:
\begin{equation*}
\phi(v)(u)=
\begin{cases}
\si^{-2}u & \text{if $v=b_{i}$ and $u=a_{i}$, or if $v=\tib_{i}$ and $u=\tia_{i}$}\\
\z^{-1}u & \text{if $v=b_{i}$ and $u=\tia_{i}$, or if $v=\tib_{i}$ and $u=a_{i}$}\\
u & \text{otherwise,}
\end{cases}
\end{equation*}
is well defined, and that it extends to a homomorphism $\phi : F\!A(V)\to \operatorname{Aut}(F\!A(U))$. We may thus form the semi-direct product $F\!A(U) \rtimes_{\phi} F\!A(V)$, and by standard results (\cite{joh} for example), this group admits a presentation that coincides with that of $\bsurf{k,n}{g}{}/\Gamma_3(\bsurf{k,n}{g}{})$ given in Proposition~\ref{lem:presgamma}.
 
\item follows directly from~(i).

\item The centrality of $\si, \tsi$ and $\z$ follows from Proposition~\ref{lem:presgamma}, and the fact that the subgroup $\langle \si, \tsi,\z\rangle$ is free Abelian of rank three follows from~(i). To see that this subgroup is indeed the centre of $\bsurf{k,n}{g}{}/\Gamma_3(\bsurf{k,n}{g}{})$, let $\gamma$ be as in equation~(\ref{eq:gamma}), and suppose that some $m_j$, $\widetilde{m}_j$, $n_j$ or $\widetilde{n}_j$ is non zero. If $m_j\neq 0$ say, then $b_j \gamma  b_j^{-1}= (\sigma^{-2m_j})\z^{-\widetilde{m}_j}\gamma$, so $[\gamma,b_j] = \sigma^{-2m_j}\z^{-\widetilde{m}_j}\neq 1$ since the set $\{\si,\z\}$ generates a free Abelian group of  rank~$2$, and thus $\gamma$ is non central. By replacing $b_j$ by $a_j$, $\tib_j$ and $\tia_j$ respectively, we obtain the same conclusion in the cases $n_j,\widetilde{m}_j, \widetilde{n}_j\neq 0$. So if $\gamma$ is central then $m_j = n_j = \widetilde{m}_j =  \widetilde{n}_j =  0$ for all $j=1,\ldots,g$, and the result follows.\qedhere
\end{enumerate}
\end{proof}

Before going any further, we state and prove the following lemma that shall be used at various points in the rest of the paper. If $H$ is a group then we denote its centre by $Z(H)$.
\begin{lem}\label{lem:equivinjective}
Let $K,H$ and $\widetilde{H}$ be groups, let $\gamma:K \to H$ and $\tau:H \to \widetilde{H}$ be homomorphisms, and let $\widetilde{\tau}: K \to \widetilde{H}$ be defined by $\widetilde{\tau}=\tau\circ \gamma$. Assume that $\Gamma_{2}(H)\subset \Im{\gamma}$ and that $Z(H)\subset \Im{\gamma}$. If $\widetilde{\tau}$ is injective then $\tau$ is injective. In particular, if $\gamma$ is injective, then $\widetilde{\tau}$ is injective if and only if $\tau$ is injective. 
\end{lem}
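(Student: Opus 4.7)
The plan is to set $N = \ker(\tau)$ and to show that $N$ is trivial by a two-step argument. First I would extract the information provided by the injectivity of $\widetilde\tau$. If $n \in N \cap \Im{\gamma}$, writing $n = \gamma(k)$, we have $\widetilde\tau(k) = \tau(\gamma(k)) = \tau(n) = 1$, and the injectivity of $\widetilde\tau$ forces $k = 1$, hence $n = 1$. Therefore
\begin{equation*}
N \cap \Im{\gamma} = \{1\}.
\end{equation*}

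Next I would use that $N$ is normal in $H$ to exploit the hypothesis $\Gamma_2(H) \subset \Im{\gamma}$. For any $h \in H$ and $n \in N$, the commutator $[h,n] = (h n h^{-1}) n^{-1}$ lies in $N$ (by normality of $N$) and in $\Gamma_2(H)$ (by definition). Hence $[h,n] \in N \cap \Gamma_2(H) \subset N \cap \Im{\gamma} = \{1\}$, so $n$ commutes with every $h \in H$. This shows $N \subset Z(H)$, and the assumption $Z(H) \subset \Im{\gamma}$ then yields $N \subset \Im{\gamma}$. Combined with the first step, $N \subset N \cap \Im{\gamma} = \{1\}$, so $\tau$ is injective.

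For the \emph{in particular} clause, the forward implication (if $\tau$ is injective then $\widetilde\tau$ is injective) is the obvious fact that a composition of injective homomorphisms is injective; the reverse implication is precisely the statement just proved. Since every step reduces to checking membership in $\Im{\gamma}$ or in $Z(H)$, there is no real obstacle here --- the only point that requires care is the appeal to normality of $N = \ker{\tau}$ in $H$, which is what allows the commutator trick to land inside $N$.
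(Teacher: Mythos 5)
Your proof is correct and follows essentially the same route as the paper's: both arguments use the injectivity of $\widetilde{\tau}$ together with $\Gamma_2(H)\subset\Im{\gamma}$ to show that every element of $\ker{\tau}$ is central, and then use $Z(H)\subset\Im{\gamma}$ to kill it. Your packaging of the first step as $\ker{\tau}\cap\Im{\gamma}=\{1\}$ and the appeal to normality of $\ker{\tau}$ are only cosmetic variations on the paper's direct computation $\widetilde{\tau}(k)=[\tau(g),\tau(g')]=1$.
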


\begin{proof}
Suppose that $\Gamma_{2}(H)\subset \Im{\gamma}$, $Z(H)\subset \Im{\gamma}$ and that $\widetilde{\tau}$ is injective, and let us prove that $\tau$ is injective. Let $g\in \ker{\tau}$. If $g'\in H$ then $[g,g']\in \Gamma_{2}(H)$, so there exists $k\in K$ such that $\gamma(k)=[g,g']$. Then $\widetilde{\tau}(k)= \tau\circ \gamma(k)=[\tau(g),\tau(g')]=1$ since $g\in \ker{\tau}$. Thus $k=1$ by injectivity of $\widetilde{\tau}$, and hence $[g,g']=1$ for all $g'\in H$. It follows that $g\in Z(H)$, so there exists $k'\in K$ such that $\gamma(k')=g$. As in the previous sentence, we conclude that $k'=1$, and so $g=1$, which proves the injectivity of $\tau$. The second assertion then follows easily.
\end{proof}

The following result  says that the 
inclusion of classical mixed braid groups in surface mixed braid groups induces an embedding on the level of metabelian quotients. As we saw in Corollary~\ref{nolinearextK1}, there is no such embedding on the level of Abelian quotients.

\begin{cor} \label{cor:themain2iii}
Let $k,n\geq 3$, and let $g\geq 1$. Let $H$ be a group, and let $\rho_H : B_{k,n}(\surf{g}{}) \to H$ be a homomorphism.
\begin{enumerate}[(i)] 
\item The following conditions are equivalent:
\begin{enumerate}[(a)]
\item the homomorphism $\rho_H$ factors through~$\rho_{k,n}: B_{k,n}(\surf{g}{})\to B_{k,n}(\surf{g}{})/\Gamma_3(B_{k,n}(\surf{g}{}))$.
\item the restriction of $\rho_H$ to $B_{k,n}$ factors through~$r_{k,n}: B_{k,n}\to B_{k,n}/\Gamma_2(B_{k,n})$.
\item there exist $\si,\tsi\in H$ such that $\rho_{H}(S)= \{\si\}$ and $\rho_{H}(\widetilde{S})= \{\tsi\}$.

\end{enumerate}
\item There exists an injective homomorphism~$\alpha_{k,n}:B_{k,n}/ \Gamma_2(B_{k,n})\to B_{k,n}(\surf{g}{})/ \Gamma_3(B_{k,n}(\surf{g}{}))$ for which $\alpha_{k,n}\circ r_{k,n} = \rho_{k,n}\circ\iota_{k,n}$ and whose image is the centre of $B_{k,n}(\surf{g}{})/ \Gamma_3(B_{k,n}(\surf{g}{}))$.
\item Let $\rho_{H,3}: B_{k,n}(\surf{g}{})/\Gamma_3(B_{k,n}(\surf{g}{}))\to H$ be a homomorphism, and let $\rho_{H,2} : B_{k,n}/\Gamma_2(B_{k,n})\to H$ be the homomorphism defined by $\rho_{H,2}=\rho_{H,3}\circ \alpha_{k,n}$. Then $\rho_{H,3}$ is injective if and only if $\rho_{H,2}$ is injective.
\item 
Let $\rho_{H,2}: B_{k,n}/\Gamma_2(B_{k,n})\to H$ be a homomorphism such that $\rho_{H} \circ \iota_{k,n}= \rho_{H,2} \circ r_{k,n}$. Then there exists a homomorphism $\rho_{H,3}: B_{k,n}(\surf{g}{})/\Gamma_3(B_{k,n}(\surf{g}{}))\to H$ such that $\rho_{H,3}\circ \rho_{k,n}=\rho_{H}$ and $\rho_{H,2}=\rho_{H,3}\circ \alpha_{k,n}$. Furthermore, $\rho_{H,3}$ is injective if and only if $\rho_{H,2}$ is injective. In particular, if $\rho_{H,2}$ is injective and $\rho_{H}$ is surjective then $\rho_{H,3}$ is an isomorphism.
\end{enumerate}
\end{cor}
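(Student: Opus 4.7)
The proof proceeds in four parts, with part~(i) providing a combinatorial characterisation of those $\rho_{H}$ that factor through $\rho_{k,n}$, and with parts~(ii)--(iv) leveraging this characterisation together with Lemma~\ref{lem:equivinjective} to derive the injectivity and isomorphism statements. For part~(i), I will prove the cycle (a)$\Rightarrow$(b)$\Rightarrow$(c)$\Rightarrow$(a); write $M=B_{k,n}(\surf{g}{})/\Gamma_{3}(B_{k,n}(\surf{g}{}))$ for brevity. The implication (a)$\Rightarrow$(b) holds because the composition $\rho_{k,n}\circ\iota_{k,n}$ sends each generator $\si_{i}$, $\tsi_{j}$ or $\z_{\ell}$ of $B_{k,n}$ into $\{\si,\tsi,\z\}$ by Proposition~\ref{lem:presgamma}, and this set lies in the centre of $M$ by Corollary~\ref{cor:decgammamix}(iii); thus $\rho_{k,n}\circ\iota_{k,n}$ has Abelian image and factors through $r_{k,n}$, so when $\rho_{H}=f\circ\rho_{k,n}$ the restriction $\rho_{H}\circ\iota_{k,n}=f\circ(\rho_{k,n}\circ\iota_{k,n})$ does too. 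The implication (b)$\Rightarrow$(c) is immediate from Proposition~\ref{lem:presgammaabel} with $g=0$, which shows that $r_{k,n}$ collapses $S$ and $\widetilde{S}$ to single classes $\si$ and $\tsi$ respectively. The decisive step is (c)$\Rightarrow$(a): after replacing $H$ by the image $\rho_{H}(B_{k,n}(\surf{g}{}))$ so that the surjectivity hypothesis is met, Proposition~\ref{prop:techresult} yields a presentation of this image whose defining relations include all relations of the presentation of $M$ given in Proposition~\ref{lem:presgamma}; consequently the image is a quotient of $M$, providing the required factorisation through $\rho_{k,n}$.

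For part~(ii), I define $\alpha_{k,n}$ as the (unique, because $r_{k,n}$ is surjective) factorisation of $\rho_{k,n}\circ\iota_{k,n}$ through $r_{k,n}$ obtained in the proof of (a)$\Rightarrow$(b), so that $\alpha_{k,n}\circ r_{k,n}=\rho_{k,n}\circ\iota_{k,n}$ holds by construction. Its image equals $\langle\si,\tsi,\z\rangle$, which is free Abelian of rank three and coincides with the centre of $M$ by Corollary~\ref{cor:decgammamix}(iii). Since Proposition~\ref{lem:presgammaabel} with $g=0$ identifies $B_{k,n}/\Gamma_{2}(B_{k,n})$ with $\Z^{3}$ on generators $\si,\tsi,\z$ that $\alpha_{k,n}$ sends to the corresponding free generators of $Z(M)$, the map $\alpha_{k,n}$ is an isomorphism onto its image and in particular injective.

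For parts~(iii) and~(iv), I invoke Lemma~\ref{lem:equivinjective} with $K=B_{k,n}/\Gamma_{2}(B_{k,n})$, with the role of $H$ played by $M$, with the role of $\widetilde{H}$ played by the group $H$ of the present corollary, and with $\gamma=\alpha_{k,n}$, $\tau=\rho_{H,3}$, $\widetilde{\tau}=\rho_{H,2}$. The two hypotheses of that lemma are verified: $Z(M)=\Im{\alpha_{k,n}}$ by part~(ii), and $\Gamma_{2}(M)\subset\Im{\alpha_{k,n}}$ because the relations~(b) of Proposition~\ref{lem:presgamma} show that every commutator of two generators of $M$ equals one of $\si^{2}$, $\tsi^{2}$ or $\z$, each lying in $\langle\si,\tsi,\z\rangle=Z(M)$. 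Part~(iii) then follows directly from Lemma~\ref{lem:equivinjective} and the injectivity of $\alpha_{k,n}$ proved in~(ii). For part~(iv), the hypothesis $\rho_{H}\circ\iota_{k,n}=\rho_{H,2}\circ r_{k,n}$ shows that $\rho_{H}$ restricted to $B_{k,n}$ factors through $r_{k,n}$, which is condition~(b) of part~(i); hence (a) supplies $\rho_{H,3}$ with $\rho_{H,3}\circ\rho_{k,n}=\rho_{H}$. The identity $\rho_{H,2}=\rho_{H,3}\circ\alpha_{k,n}$ is then obtained by computing $(\rho_{H,3}\circ\alpha_{k,n})\circ r_{k,n}=\rho_{H,3}\circ\rho_{k,n}\circ\iota_{k,n}=\rho_{H}\circ\iota_{k,n}=\rho_{H,2}\circ r_{k,n}$ and cancelling the surjection $r_{k,n}$; the injectivity equivalence is exactly part~(iii), and if $\rho_{H}$ is surjective then $\rho_{H,3}$ inherits surjectivity from $\rho_{H,3}\circ\rho_{k,n}=\rho_{H}$, yielding an isomorphism. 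The main obstacle in the whole argument is the implication (c)$\Rightarrow$(a) in part~(i), where one must handle the non-surjective case by restriction to the image and invoke the presentation machinery of Proposition~\ref{prop:techresult}; the remaining steps are mechanical consequences of the explicit presentations of $M$ and of $B_{k,n}/\Gamma_{2}(B_{k,n})$, together with Lemma~\ref{lem:equivinjective}.
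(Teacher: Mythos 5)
Your proposal is correct and follows essentially the same route as the paper: part (i) via Propositions~\ref{lem:presgammaabel}, \ref{prop:techresult} and~\ref{lem:presgamma} (with the harmless variation of arranging the implications as a cycle (a)$\Rightarrow$(b)$\Rightarrow$(c)$\Rightarrow$(a) rather than proving (b)$\Leftrightarrow$(c) and (a)$\Leftrightarrow$(c) separately), part (ii) by identifying the image of $\alpha_{k,n}$ with the rank-three free Abelian centre from Corollary~\ref{cor:decgammamix}(iii), and parts (iii)--(iv) by Lemma~\ref{lem:equivinjective} together with the surjectivity of $r_{k,n}$. The only cosmetic difference is that you verify $\Gamma_2$ of the quotient lies in the centre by inspecting commutators of generators, where the paper simply notes this holds for any quotient by $\Gamma_3$; both are valid.
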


Note that Theorem~\ref{main_thm_2}(iii) follows by taking $\theta_{H}= \rho_{H,3}$ in Corollary~\ref{cor:themain2iii}(iv). 



\pagebreak
 
\begin{proof}[Proof of Corollary~\ref{cor:themain2iii}] Replacing $H$ by $\rho_H(B_{k,n}(\surf{g}{}))$ if necessary, we may suppose that $\rho_H$ is surjective.
\begin{enumerate}[(i)]
\item The equivalence of~(b) and~(c) follows from Propositions~\ref{lem:presgammaabel} and~\ref{prop:techresult} since $B_{k,n}$ is generated by $S\cup\widetilde{S}\cup Z$.
%
%
The implication (a)$\Longrightarrow$(c) also follows easily from Proposition~\ref{lem:presgamma}. So it suffices to prove the implication (c)$\Longrightarrow$(a). If~(c) holds then Proposition~\ref{prop:techresult} applies, and the comparison of the presentation of $H$ given there with that of $B_{k,n}(\surf{g}{})/\Gamma_3(B_{k,n}(\surf{g}{}))$ given in Proposition~\ref{lem:presgamma} implies that $H$ is a quotient of $B_{k,n}(\surf{g}{})/\Gamma_3(B_{k,n}(\surf{g}{}))$. Thus $\rho_{H}$ factors through $\rho_{k,n}$ as required.

\item Take $H =  B_{k,n}(\surf{g}{})/\Gamma_3(B_{k,n}(\surf{g}{}))$ and $\rho_H = \rho_{k,n}$ in part~(i). Using Proposition~\ref{prop:techresult}, condition~(a) is satisfied, and so applying condition~(b), we deduce the existence of a homomorphism~$\alpha_{k,n}:B_{k,n}/ \Gamma_2(B_{k,n})\to B_{k,n}(\surf{g}{})/ \Gamma_3(B_{k,n}(\surf{g}{}))$ that provides a factorisation of $\rho_{H}$ through $r_{k,n}$, so that $\alpha_{k,n}  \circ r_{k,n}=\rho_{k,n}\circ\iota_{k,n}$. This relation, the surjectivity of $r_{k,n}$ and Proposition~\ref{prop:techresult} imply that the image of $\alpha_{k,n}$ is generated by $\brak{\si, \tsi,\z}$. By Corollary~\ref{cor:decgammamix}(iii), this image is a free Abelian group of rank $3$ with basis $\{ \si,\tsi,\z\}$, and is equal to the centre of $B_{k,n}(\surf{g}{})/ \Gamma_3(B_{k,n}(\surf{g}{}))$. By Proposition~\ref{lem:presgammaabel}, $B_{k,n}/ \Gamma_2(B_{k,n})$ is also a free Abelian group of rank $3$  with basis $\{ \si,\tsi,\z\}$. Since $\alpha_{k,n}$ sends this basis onto the given basis of the image of $\alpha_{k,n}$, we conclude that $\alpha_{k,n}$ is injective.

\item 
We have $\rho_{H,2} = \rho_{H,3}\circ\alpha_{k,n}$, where $\alpha_{k,n}$ is injective. By standard commutator properties, we have $\Gamma_{2}(B_{k,n}(\surf{g}{})/\Gamma_3(B_{k,n}(\surf{g}{})))\subset Z(B_{k,n}(\surf{g}{})/\Gamma_3(B_{k,n}(\surf{g}{})))$. Further, $Z(B_{k,n}(\surf{g}{})/\Gamma_3(B_{k,n}(\surf{g}{})))$ is equal to the image of $\alpha_{k,n}$ by part~(ii). The result then follows by applying Lemma~\ref{lem:equivinjective}.


\item Since the restriction of $\rho_{H}$ to $B_{k,n}$ factors through $r_{k,n}$, condition~(i)(b) is satisfied, and so by condition~(i)(a), there exists a homomorphism $\rho_{H,3}: B_{k,n}(\surf{g}{})/\Gamma_3(B_{k,n}(\surf{g}{}))\to H$ such that $\rho_{H,3}\circ \rho_{k,n}=\rho_{H}$. Using the homomorphism $\alpha_{k,n}$ of part~(ii), we obtain the following diagram:
\begin{equation*}
\begin{tikzcd}
B_{k,n} \arrow{r}{r_{k,n}} \arrow{d}{\iota_{k,n}} &  B_{k,n}/\Gamma_2(B_{k,n}) \arrow[swap]{d}{\rho_{H,2}} \arrow[bend left=40]{rddd}{\alpha_{k,n}} &\\
B_{k,n}(\surf{g}{}) \arrow{r}{\rho_{H}} \arrow[swap,bend right=20]{ddrr}{\rho_{k,n}} &  H &\\
&& \\
&& B_{k,n}(\surf{g}{})/\Gamma_3(B_{k,n}(\surf{g}{})), \arrow[dashed]{uul}{\rho_{H,3}}
\end{tikzcd}
\end{equation*}
which is commutative, except possibly for the relation $\rho_{H,3}\circ \alpha_{k,n}=\rho_{H,2}$. From the commutativity of the rest of the diagram, we see that $\rho_{H,3}\circ \alpha_{k,n}\circ r_{k,n}= \rho_{H,3}\circ \rho_{k,n}\circ \iota_{k,n}= \rho_{H}\circ \iota_{k,n}= \rho_{H,2}\circ r_{k,n}$. The surjectivity of $r_{k,n}$ implies that $\rho_{H,3}\circ \alpha_{k,n}=\rho_{H,2}$, which proves the existence of $\rho_{H,3}$ satisfying the given properties. The equivalence of the injectivity of $\rho_{H,3}$ and that of $\rho_{H,2}$ is given by part~(iii), and the last part then follows easily.\qedhere         
\end{enumerate}
\end{proof}


\subsection{Metabelian quotients of $\bsurf{k}{g,n}{}$ and $\bsurf{n}{g}{}$} \label{sec:metaquotients2} 



Starting with the presentations of Proposition~\ref{presconnues} instead of that of~$B_{k,n}(\surf{g}{})$ given in~Proposition~\ref{prop:presMB}, many of the arguments of Section~\ref{sec:metaquotients} may be repeated for $\bsurf{k}{g,n}{}$ and $\bsurf{n}{g}{}$. As we already saw in Section~\ref{sec:presquotients}, there are some minor differences in some of the proofs, for example, the $\z_i$ are not identified in quotients of $\bsurf{k}{g,n}{}$, and the case $k=1$ gives rise to slightly different results. In what follows, $\bsurf{k}{g}{}$ will also be denoted by $\bsurf{k}{g,0}{}$, and we shall consider its presentation given by Proposition~\ref{presconnues}(i) with $Z=\varnothing$ subject to the relations (a.1)--(a.6). We emphasise that in this section, we adopt the convention that if $g=0$ (resp.\ $n=0$, $k=1$) then $AB$ (resp.\ $Z$, $\brak{\si}$) should be suppressed from the list of generators, and that any relations involving its elements should also be removed. Propositions~\ref{prop:techresult3} and~\ref{lem:presbng} and Corollaries~\ref{cor:decgamma} and~\ref{themain2iv} are the analogues for $\bsurf{k}{g,n}{}$ of Propositions~\ref{prop:techresult} and~\ref{lem:presgamma} and Corollaries~\ref{cor:decgammamix} and~\ref{cor:themain2iii} respectively, and their proofs, which we leave to the reader, are similar.


\begin{prop} \label{prop:techresult3}
Let $k\geq 3$, let $g,n\geq 0$, let $G$ be a group, and let $\rho_G : \bsurf{k}{g,n}{} \to G$ be a surjective homomorphism. Consider the presentation of $\bsurf{k}{g,n}{}$ given by Proposition~\ref{presconnues}, and let $R$ be a set of words on~$(S\cup AB \cap Z)\cup (S\cup AB\cup Z)^{-1}$ whose normal closure in $\bsurf{k}{g,n}{}$ generates $\ker{(\rho_{G})}$.  Suppose that there exists $\si\in G$ such that $\rho_{G}(S)= \{\si\}$.  Let $R_{G,\si}$ be the set of words obtained from $R$ by replacing all of the $\si_i$ by $\si$ (and $\si_i^{-1}$ by $\si^{-1}$). Then the group~$G$ has the following presentation:\\ 
\textbf{Generating set:} $\rho_{G}(S \cup AB \cup Z)=\{\si\}\cup AB \cup Z$;\\
\textbf{Relations:}
\begin{enumerate}[\textbullet]
\item $w = 1$ for all $w\in R_{G,\si}$;
\item $xy = yx$ for all $x,y\in \{\si\}\cup AB \cup Z$, $x\neq y$, and $\{x,y\}\notin \bigl\{\{a_i,b_i\} ; i=1, \ldots, g\bigr\}$;
\item $[a_i,b_i]=\si^2$ for all $i=1,\ldots, g$.
\end{enumerate}
\end{prop}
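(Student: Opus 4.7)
The strategy is to mimic the proof of Proposition~\ref{prop:techresult}(ii), but applied to the presentation of $\bsurf{k}{g,n}{}$ given in Proposition~\ref{presconnues}(i) together with the set $R$ defining $\ker(\rho_G)$. By standard presentation theory, $G$ admits the presentation with generating set $S\cup AB\cup Z$, whose defining relations are those of type~(a.1)--(a.8) from Proposition~\ref{presconnues}(i) together with the relations $w=1$ for all $w\in R$. Using the hypothesis that $\rho_G(S)=\{\si\}$, I would then obtain a new presentation of $G$ by replacing every occurrence of $\si_i$ (respectively $\si_i^{-1}$) by $\si$ (respectively $\si^{-1}$) in both the generating set and every defining relation. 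In particular, the relations coming from $R$ become precisely the elements of $R_{G,\si}$, so what remains to check is that the transforms of (a.1)--(a.8) reduce, modulo the relations stated in the proposition, to the listed commutation relations and the relations $[a_i,b_i]=\si^2$.

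The reduction is then a routine calculation. The transforms of (a.1) and (a.2) become $\si\si=\si\si$ and $\si^3=\si^3$, and so are trivial. The hypothesis $k\geq 3$ ensures that the index set $\{2,\ldots,k-1\}$ in (a.3) is non-empty, so the transform of (a.3) yields $c\si=\si c$ for all $c\in AB\cup Z$, i.e., $\si$ is central in $G$. Once this centrality is known, the transform of (a.4) reads $c^2\si^2=c^2\si^2$ and may be discarded. The transform of (a.5), namely $a_i\si b_i=\si b_i\si a_i\si$, simplifies via the centrality of $\si$ to $a_ib_i=\si^2 b_ia_i$, equivalently $[a_i,b_i]=\si^2$. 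Finally, the transforms of (a.6), (a.7) and (a.8) become $cd=dc$ for $c,d\in AB$ with $c<d$, $\zeta_ic=c\zeta_i$ for $c\in AB$ and $\zeta_i\in Z$, and $\zeta_i\zeta_j=\zeta_j\zeta_i$ for $i<j$ respectively; together with the centrality of $\si$, these give exactly the commutation of all distinct pairs in $\{\si\}\cup AB\cup Z$ other than pairs of the form $\{a_i,b_i\}$.

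Collecting the surviving relations gives precisely the presentation asserted in the statement, which completes the argument. There is no genuine obstacle here: the only subtle point is the use of the hypothesis $k\geq 3$, which is essential to invoke (a.3) in order to force $\si$ to commute with every element of $AB\cup Z$; without this, (a.4)--(a.8) could not be simplified as above, and the asserted presentation would fail. The conventions for $g=0$ and $n=0$ stated just before the proposition (suppression of $AB$ or $Z$ and of all relations involving their elements) mean that the same argument covers those cases without modification.
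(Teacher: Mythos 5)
Your proposal is correct and follows exactly the route the paper intends: the authors explicitly leave the proof of Proposition~\ref{prop:techresult3} to the reader, stating that it is the analogue for $\bsurf{k}{g,n}{}$ of Proposition~\ref{prop:techresult} with the presentation of Proposition~\ref{presconnues}(i) in place of that of Proposition~\ref{prop:presMB}, which is precisely what you carry out. Your reduction of relations (a.1)--(a.8), including the role of $k\geq 3$ in extracting the centrality of $\si$ from (a.3) and the derivation of $[a_i,b_i]=\si^2$ from (a.5), matches the computation done in the proof of Proposition~\ref{prop:techresult}(ii).
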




Let $\rho_{k}: \bsurf{k}{g,n}{}\to \bsurf{k}{g,n}{}/\Gamma_3(\bsurf{k}{g,n}{})$ denote the canonical projection. Note that if $n=0$ then this is the homomorphism $\rho_{k}$ defined in the Introduction.

\begin{prop}\label{lem:presbng} 
Let $k\geq 3$, and let $g,n\geq 0$. The group~$\bsurf{k}{g,n}{}/\Gamma_3(\bsurf{k}{g,n}{})$ admits the following group presentation:\\
\noindent \textbf{Generating set:} $\rho_{k}(S\cup AB\cup Z)=\{\si\}\cup AB \cup Z$;\\
\noindent  \textbf{Relations:}
\begin{enumerate}[\textbullet]
\item $xy = yx$ for all $x,y\in \{\si\}\cup AB \cup Z$, $x\neq y$, and $\{x,y\}\notin \bigl\{\{a_i,b_i\} ; i=1, \ldots, g\bigr\}$;
\item for all $i=1,\ldots, g$, $[a_i,b_i]=\si^2$.
\end{enumerate}
\end{prop}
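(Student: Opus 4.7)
The plan is to follow exactly the strategy used in the proof of Proposition~\ref{lem:presgamma}, applying Proposition~\ref{prop:techresult3} to the surjective homomorphism $\rho_{k}: \bsurf{k}{g,n}{} \to \bsurf{k}{g,n}{}/\Gamma_{3}(\bsurf{k}{g,n}{})$, with $R$ taken to be (a set of words representing) the standard generating set of $\Gamma_{3}(\bsurf{k}{g,n}{})$, namely all iterated commutators $[g_{1},[g_{2},g_{3}]]$ with $g_{i}\in \bsurf{k}{g,n}{}$.

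First I would verify the hypothesis of Proposition~\ref{prop:techresult3}, i.e.\ that all of the generators $\si_{i}$ have a common image under $\rho_{k}$. This is done exactly as in the proof of Proposition~\ref{lem:presgamma}: compute $[\si_{i},[\si_{i+1},\si_{i}]] = \si_{i+1}^{2}\si_{i}^{-1}\si_{i+1}^{-1}$ using the braid relation~(a.2), and since its image under $\rho_{k}$ is trivial, one deduces $\rho_{k}(\si_{i+1}) = \rho_{k}(\si_{i})$ for all $1\le i\le k-2$; this requires $k\ge 3$. Denote the common image by $\si$. Then Proposition~\ref{prop:techresult3} yields a presentation of $\bsurf{k}{g,n}{}/\Gamma_{3}(\bsurf{k}{g,n}{})$ whose generating set is $\{\si\}\cup AB\cup Z$, whose relations are those listed in the statement of the present proposition, together with the additional relations $w = 1$ obtained from the words $w \in R_{G,\si}$, i.e.\ iterated commutators $[x,[y,z]]$ where $x,y,z$ range over words on $(\{\si\}\cup AB\cup Z)\cup (\{\si\}\cup AB\cup Z)^{-1}$.

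The remaining step, which mirrors the corresponding step in the proof of Proposition~\ref{lem:presgamma}, is to verify that every such relation $[x,[y,z]] = 1$ is a consequence of the relations~(a)--(b) listed in the statement. Let $\equiv$ denote the equivalence relation on words in $(\{\si\}\cup AB\cup Z)\cup (\{\si\}\cup AB\cup Z)^{-1}$ generated by those relations. Using induction on the total length of $y$ and $z$, together with the Witt-Hall commutator identities~\cite[Theorem~5.1(9) and~(10)]{MKS}, one reduces $[y,z]$ modulo $\equiv$ to a product of conjugates of basic commutators of pairs of generators; by relations~(a) and~(b) each such basic commutator is either trivial or equal to $\si^{\pm 2}$, which is central by relation~(a). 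Hence $[y,z]$ lies in the central subgroup generated by $\si$, and therefore commutes with any word $x$, giving $[x,[y,z]]\equiv 1$. This is the only slightly delicate step, but it is a routine adaptation of the argument already spelled out for Proposition~\ref{lem:presgamma} (here simpler, because there is only one central commutator value $\si^{2}$ instead of three).

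Once these additional relations are seen to be redundant, the presentation produced by Proposition~\ref{prop:techresult3} reduces to the one in the statement of Proposition~\ref{lem:presbng}, completing the proof. I do not anticipate any genuine obstacle; the main care is in treating the boundary cases $g=0$, $n=0$ and $k=1$ correctly by suppressing the generators and relations indicated in the convention preceding Proposition~\ref{prop:techresult3}, though in the present proposition $k\ge 3$ is assumed so only $g=0$ and $n=0$ need attention.
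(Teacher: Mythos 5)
Your proposal is correct and is essentially the proof the paper intends: the authors explicitly state that Proposition~\ref{lem:presbng} is proved by repeating the argument of Proposition~\ref{lem:presgamma}, i.e.\ verifying $\rho_{k}(S)=\{\si\}$ via the identity $[\si_i,[\si_{i+1},\si_i]]=\si_{i+1}^{2}\si_i^{-1}\si_{i+1}^{-1}$ (using $k\ge 3$), invoking Proposition~\ref{prop:techresult3}, and then checking via the Witt--Hall identities that the relations $[x,[y,z]]=1$ follow from the listed commutation relations together with the centrality of $\si^{2}$. No gaps.
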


As in Corollary~\ref{mixedAbelianisation}, we deduce from Proposition~\ref{lem:presbng} that $B_{k}(\D_{n})/\Gamma_3(B_{k}(\D_{n}))$ coincides with its Abelianisation $B_{k}(\D_{n})/\Gamma_2(B_{k}(\D_{n}))$.

\begin{cor}\label{cor:decgamma}
Let $k\geq 3$, $g\geq 1$ and $n\geq 0$.
\begin{enumerate}[(i)]
\item The group~$\bsurf{k}{g,n}{}/\Gamma_3(\bsurf{k}{g,n}{})$ is isomorphic to a semi-direct product of the form
$\left( \Z^{n+1} \times \Z^g \right) \rtimes \Z^g$.
More precisely, the first factor~$\Z^{n+1}$ is generated by $\{\si\}\cup Z$, the second factor $\Z^g$
is generated by $\{a_1, \ldots, a_g\}$, and the third factor $\Z^g$ is generated by $\{b_1, \ldots, b_g\}$.

\item Every element~$\gamma \in \bsurf{k}{g,n}{}/\Gamma_3(\bsurf{k}{g,n}{})$ may be written uniquely in the form:
\begin{equation*}
\text{$\gamma= \sigma^p \prod_{i=1}^n \z_i^{q_i} \prod_{i=1}^g a_i^{m_i} \prod_{i=1}^g b_i^{n_i}$, where $p, q_{i},m_{i},n_{i}\in \Z$.}
\end{equation*}

\item The centre of the group $\bsurf{k}{g,n}{}/\Gamma_3(\bsurf{k}{g,n}{})$ is isomorphic to~$\Z^{n+1}$ and is generated by $\{\si\}\cup Z$.
\end{enumerate}
\end{cor}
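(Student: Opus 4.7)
My plan is to follow the strategy used in the proof of Corollary~\ref{cor:decgammamix} essentially verbatim, adapted to the simpler set of relations provided by Proposition~\ref{lem:presbng}. For part~(i), I set $U=\{\si,a_{1},\ldots,a_{g}\}\cup Z$ and $V=\{b_{1},\ldots,b_{g}\}$, and define $\phi:V\to\operatorname{Aut}(F\!A(U))$ by
\[
\phi(b_{i})(a_{i})=\si^{-2}a_{i},\qquad \phi(b_{i})(u)=u \quad\text{for every other }u\in U.
\]
Each $\phi(b_{i})$ sends the basis $U$ to the basis $(U\setminus\{a_{i}\})\cup\{\si^{-2}a_{i}\}$, hence is an automorphism of $F\!A(U)$, and a one-line check on basis elements shows that the $\phi(b_{i})$ pairwise commute. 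Thus $\phi$ extends to a homomorphism $F\!A(V)\to\operatorname{Aut}(F\!A(U))$ and we may form the semi-direct product $F\!A(U)\rtimes_{\phi}F\!A(V)$. Writing out the standard presentation of this semi-direct product (see~\cite{joh}), the action relation $b_{i}a_{i}b_{i}^{-1}=\si^{-2}a_{i}$ rewrites as $[a_{i},b_{i}]=\si^{2}$, and the remaining relations say precisely that the elements of $\{\si\}\cup AB\cup Z$ pairwise commute with the sole exception of the pairs $\{a_{i},b_{i}\}$. This matches the presentation of $\bsurf{k}{g,n}{}/\Gamma_{3}(\bsurf{k}{g,n}{})$ given in Proposition~\ref{lem:presbng}, yielding the required isomorphism.

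Part~(ii) follows immediately from~(i): every element of a semi-direct product has a unique expression as a product of an element of the normal factor and an element of the complementary factor, and since $F\!A(U)$ and $F\!A(V)$ are free Abelian on the indicated bases, one can further order their generators to obtain the displayed normal form.

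For part~(iii), the centrality of $\si$ and of each $\z_{i}$ is immediate from the commutation relations of Proposition~\ref{lem:presbng}, and part~(i) shows that the subgroup they generate is free Abelian of rank $n+1$. Conversely, suppose that $\gamma=\sigma^{p}\prod_{i=1}^{n}\z_{i}^{q_{i}}\prod_{i=1}^{g}a_{i}^{m_{i}}\prod_{i=1}^{g}b_{i}^{n_{i}}$ is central. A direct computation using the centrality of $\si$ and the relation $b_{j}a_{j}b_{j}^{-1}=\si^{-2}a_{j}$ gives
\[
[b_{j},\gamma]=\si^{-2m_{j}},
\]
which is non-trivial whenever $m_{j}\neq 0$ thanks to the free Abelian structure from~(i); an analogous computation with $a_{j}$ conjugating $\gamma$ yields $[a_{j},\gamma]=\si^{2n_{j}}$, forcing $n_{j}=0$. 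Hence all $m_{j}$ and $n_{j}$ vanish and $\gamma$ lies in $\langle\si\rangle\cdot\langle Z\rangle\cong\Z^{n+1}$, as claimed.

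The only step that requires genuine care is the verification that $\phi$ is well defined as an action on $F\!A(U)$ and that its semi-direct product presentation matches Proposition~\ref{lem:presbng}; every other assertion is either a direct rewrite of a relation or a consequence of the free Abelian structure of the two factors. I do not anticipate any serious obstacle, since the proof of Corollary~\ref{cor:decgammamix} serves as a template and the structure here is strictly simpler (only one family of hyperbolic generators and no mixed commutators involving $\widetilde{AB}$).
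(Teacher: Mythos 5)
Your proposal is correct and follows essentially the same route as the paper, which explicitly leaves the proof of Corollary~\ref{cor:decgamma} to the reader as the direct analogue of the semi-direct product argument in Corollary~\ref{cor:decgammamix}. Your adaptation (taking $U=\{\si\}\cup Z\cup\{a_{1},\ldots,a_{g}\}$, $V=\{b_{1},\ldots,b_{g}\}$, with $\phi(b_{i})(a_{i})=\si^{-2}a_{i}$) and the centre computation via $[b_{j},\gamma]=\si^{-2m_{j}}$ match the intended argument exactly.
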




Part of the following result contrasts with Corollary~\ref{nolinearextK1}.  More precisely the 
inclusion of classical braid groups in surface braid groups induces an embedding at the level of metabelian quotients, but as we saw in Corollary~\ref{nolinearextK1},  there is no such embedding at the level of Abelian quotients.

\begin{cor}\label{themain2iv}
Let $k\geq 3$, $g\geq 1$ and $n\geq 0$. Let $G$ be a group, and let $\rho_G : \bsurf{k}{g,n}{} \to G$ be a homomorphism.
\begin{enumerate}[(i)]
\item the following conditions are equivalent:
\begin{enumerate}[(a)]
\item the homomorphism $\rho_G$ factors through~$\rho_{k}: \bsurf{k}{g,n}{}\to \bsurf{k}{g,n}{}/\Gamma_3(\bsurf{k}{g,n}{})$.
\item the restriction of $\rho_G$ to $B_{k}(\D_n)$ factors through~$r_{k}: B_{k}(\D_n)\to B_{k}(\D_n)/\Gamma_2(B_{k}(\D_n))$.
\item There exists $\si \in G$ such that $\rho_{G}(S)= \{\si\}$.
\end{enumerate}

\item There exists an injective homomorphism~$\alpha_{k}:B_{k}(\D_n)/ \Gamma_2(B_{k}(\D_n))\to \bsurf{k}{g,n}{}/ \Gamma_3(\bsurf{k}{g,n}{})$ that satisifies $\alpha_{k}\circ r_k = \rho_{k}\circ\iota_{k}$ and whose image is the centre of $B_{k}(\surf{g,n}{})/ \Gamma_3(\bsurf{k}{g,n}{})$.

\item Let $\rho_{G,3}: \bsurf{k}{g,n}{}/\Gamma_3(\bsurf{k}{g,n}{})\to G$ be a homomorphism, and consider the homomorphism $\rho_{G,2} : B_{k}(\D_n)/\Gamma_2(B_{k}(\D_n))\to G$ defined by $\rho_{G,2}=\rho_{G,3}\circ \alpha_{k}$. Then $\rho_{G,3}$ is injective if and only if $\rho_{G,2}$ is.

\item 
Let $\rho_{G,2}: B_{k}(\D_n)/ \Gamma_2(B_{k}(\D_n))\to G$ be a homomorphism such that $\rho_{G} \circ \iota_{k}= \rho_{G,2} \circ r_{k}$. Then there exists a homomorphism $\rho_{G,3}: \bsurf{k}{g,n}{}/\Gamma_3(\bsurf{k}{g,n}{})\to G$ such that $\rho_{G,3}\circ \rho_{k}=\rho_{G}$ and $\rho_{G,2}=\rho_{G,3}\circ \alpha_{k}$. Furthermore, $\rho_{G,3}$ is injective if and only if $\rho_{G,2}$ is injective. In particular, if $\rho_{G,2}$ is injective and $\rho_{G}$ is surjective then $\rho_{G,3}$ is an isomorphism.
\end{enumerate}
\end{cor}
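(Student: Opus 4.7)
The strategy is to follow the proof of Corollary~\ref{cor:themain2iii} essentially verbatim, making the appropriate substitutions: replace $B_{k,n}(\surf{g}{})$ by $\bsurf{k}{g,n}{}$ and $B_{k,n}$ by $B_k(\D_n)$, and use Propositions~\ref{prop:techresult3},~\ref{lem:presbng}, Corollary~\ref{cor:decgamma} and Proposition~\ref{lem:presbngabel} in place of Propositions~\ref{prop:techresult},~\ref{lem:presgamma}, Corollary~\ref{cor:decgammamix} and Proposition~\ref{lem:presgammaabel} respectively. As in the mixed case, by replacing $G$ by $\rho_G(\bsurf{k}{g,n}{})$ I may assume that $\rho_G$ is surjective.

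For part~(i), the equivalence of~(b) and~(c) follows from Proposition~\ref{lem:presbngabel}: since $B_k(\D_n)$ is generated by $S\cup Z$ and its Abelianisation is free Abelian on $\{\si\}\cup Z$, a homomorphism $\rho_G|_{B_k(\D_n)}$ factors through $r_k$ precisely when it sends all of $S$ to a common element. Implication (a)$\Longrightarrow$(c) follows immediately from Proposition~\ref{lem:presbng}. For (c)$\Longrightarrow$(a), Proposition~\ref{prop:techresult3} applies and furnishes a presentation of $G$ that, upon comparison with the presentation of $\bsurf{k}{g,n}{}/\Gamma_3(\bsurf{k}{g,n}{})$ given in Proposition~\ref{lem:presbng}, exhibits $G$ as a quotient of the latter, so $\rho_G$ factors through $\rho_k$.

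For part~(ii), take $G = \bsurf{k}{g,n}{}/\Gamma_3(\bsurf{k}{g,n}{})$ and $\rho_G = \rho_k$. Condition~(c) holds by Proposition~\ref{lem:presbng}, so~(b) applies and produces a homomorphism $\alpha_k$ with $\alpha_k\circ r_k = \rho_k\circ \iota_k$. The surjectivity of $r_k$ and the relation just obtained force the image of $\alpha_k$ to be generated by $\rho_k(S\cup Z)=\{\si\}\cup Z$, which by Corollary~\ref{cor:decgamma}(iii) is the centre of $\bsurf{k}{g,n}{}/\Gamma_3(\bsurf{k}{g,n}{})$ and is free Abelian of rank~$n+1$ with basis $\{\si\}\cup Z$. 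Since $B_k(\D_n)/\Gamma_2(B_k(\D_n))$ is free Abelian of the same rank on the same set by Proposition~\ref{lem:presbngabel}, the map $\alpha_k$ sends a basis onto a basis, hence is injective. For part~(iii), I apply Lemma~\ref{lem:equivinjective} with $\gamma=\alpha_k$, $\tau=\rho_{G,3}$ and $\widetilde{\tau}=\rho_{G,2}$: standard commutator calculus gives $\Gamma_2(\bsurf{k}{g,n}{}/\Gamma_3(\bsurf{k}{g,n}{}))\subset Z(\bsurf{k}{g,n}{}/\Gamma_3(\bsurf{k}{g,n}{}))$, and part~(ii) identifies the centre with $\Im{\alpha_k}$, so both hypotheses of the lemma are satisfied. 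Finally, for part~(iv), the assumption $\rho_G\circ \iota_k = \rho_{G,2}\circ r_k$ means that condition~(i)(b) holds, so~(i)(a) produces $\rho_{G,3}:\bsurf{k}{g,n}{}/\Gamma_3(\bsurf{k}{g,n}{})\to G$ with $\rho_{G,3}\circ \rho_k = \rho_G$. A short diagram chase then gives
\[
\rho_{G,3}\circ \alpha_k \circ r_k \,=\, \rho_{G,3}\circ \rho_k \circ \iota_k \,=\, \rho_G\circ \iota_k \,=\, \rho_{G,2}\circ r_k,
\]
and surjectivity of $r_k$ yields $\rho_{G,3}\circ \alpha_k = \rho_{G,2}$. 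The equivalence of injectivity then follows from~(iii), and surjectivity of $\rho_{G,3}$ when $\rho_G$ is surjective is immediate from $\rho_{G,3}\circ \rho_k = \rho_G$.

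There is no substantive obstacle here; the only care required is to verify that the proofs of the auxiliary Propositions~\ref{prop:techresult3} and~\ref{lem:presbng} and Corollary~\ref{cor:decgamma} (which the paper leaves to the reader) actually go through along the lines sketched in Section~\ref{sec:metaquotients}. The one place where the analogy is not perfectly mechanical is in part~(i)(b)$\Longleftrightarrow$(c): here one must remember that in contrast to the mixed case, the elements of $Z$ are \emph{not} identified in the Abelianisation of $B_k(\D_n)$, so condition~(c) involves only the collapsing of $S$ and not of $Z$, a fact that is consistent with the presentation of $\bsurf{k}{g,n}{}/\Gamma_3(\bsurf{k}{g,n}{})$ in Proposition~\ref{lem:presbng}.
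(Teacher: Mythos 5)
Your proposal is correct and is essentially the paper's own proof: the paper explicitly states that Corollary~\ref{themain2iv} is proved in the same way as Corollary~\ref{cor:themain2iii}, replacing Propositions~\ref{lem:presgammaabel},~\ref{prop:techresult} and~\ref{lem:presgamma} by Propositions~\ref{lem:presbngabel},~\ref{prop:techresult3} and~\ref{lem:presbng}, which is exactly the substitution you carry out (together with Corollary~\ref{cor:decgamma} in place of Corollary~\ref{cor:decgammamix}). The only minor point is that for (c)$\Longrightarrow$(b) you should also invoke Proposition~\ref{prop:techresult3} (not just Proposition~\ref{lem:presbngabel}) to see that $\rho_G(B_k(\D_n))=\langle \si\rangle\cup\rho_G(Z)$ is Abelian, exactly as the paper cites both propositions for the corresponding equivalence in the mixed case.
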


Corollary~\ref{themain2iv} is may be proved in the same way as Corollary~\ref{cor:themain2iii}: in the proof, Propositions~\ref{lem:presgammaabel},~\ref{prop:techresult} and~\ref{lem:presgamma} should be replaced by Propositions~\ref{lem:presbngabel},~\ref{prop:techresult3} and~\ref{lem:presbng} respectively.

We are now able to complete the proof of parts~(i),~(iii) and~(iv) of Theorem~\ref{main_thm_2}. Part~(ii) will be proved in Section~\ref{section4}.

\begin{proof}[Proof of parts~(i),~(iii) and~(iv) of Theorem~\ref{main_thm_2}]
Part~(iii) was proved just after the statement of Corollary~\ref{cor:themain2iii}, and part~(iv) follows in a similar way by Corollary~\ref{themain2iv}(iv). 
It remains to prove part~(i). Let $\alpha_{k,n}:B_{k,n}/ \Gamma_2(B_{k,n})\to B_{k,n}(\surf{g}{})/ \Gamma_3(B_{k,n}(\surf{g}{}))$ and $\alpha_{n}:B_{n}/ \Gamma_2(B_{n})\to \bsurf{n}{g}{}/ \Gamma_3(\bsurf{n}{g}{})$ be the homomorphisms defined in Corollaries~\ref{cor:themain2iii}(ii) and~\ref{themain2iv}(ii) respectively. Together with the commutative diagrams~(\ref{eq:diagartin}),~(\ref{diagramexactsequencea}) and~(\ref{eq:injnat}), these  two corollaries entail the existence and the commutativity of the diagram~(\ref{diagramexactsequencethm}), with the exception, \emph{a priori}, of the existence of $\gamma_k$ and the commutativity of the first column, which we now prove. 
Let $\gamma_k: G_k\to  G_k\bigl(\surf{g}{}\bigr)$ denote the restriction of $\alpha_{k,n}$ to $G_{k}$. The commutativity of the rest of the diagram~(\ref{diagramexactsequencethm}) implies that $\gamma_{k}$ is well defined, and the injectivity of $\gamma_{k}$ is a consequence of that of $\alpha_{k,n}$. Restricting appropriately the relation $\alpha_{k,n}\circ r_{k,n} = \rho_{k,n}\circ\iota_{k,n}$, we obtain $\gamma_k\circ p_k = \Phi_k\circ\iota_k$, and this completes the proof of Theorem~\ref{main_thm_2}(i).
\end{proof}

\section{The group $G_k\bigl(\surf{g}{}\bigr)$}\label{section4}

 
In this  section, we  exhibit a presentation of $G_k\bigl(\surf{g}{}\bigr)$, we prove Theorem~\ref{main_thm} and   we complete the proof of Theorem~\ref{main_thm_2}.  


Let $k,n\geq 1$ (resp.\ $k,n\geq 3$). The group $G_{k}$ (resp.\ $G_k\bigl(\surf{g}{}\bigr)$), which is defined by the commutative diagram~(\ref{eq:diagartin}) (resp.~(\ref{diagramexactsequencea})), is described in Lemma~\ref{lemGkpres} (resp.\ Proposition~\ref{fondhk}). Notice in particular that these groups only depend on $k$ and $g$, and do not depend on $n$, which justifies the absence of $n$ in the notation.

\begin{lem} \label{lemGkpres}
Let $k,n\geq 1$. Then the group $G_k$ is a free Abelian group and is a direct factor of~$B_{k,n}/\Gamma_2(B_{k,n})$. If $k = 1$ then $G_k$ is isomorphic to $\Z$ and is generated by $\{\z\}$; if $k\geq 2$ then $G_k$ is of rank~$2$, and is generated by $\{\si,\z\}$. 
\end{lem}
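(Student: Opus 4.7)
The plan is to read off the structure of $G_k$ directly from the commutative diagram~(\ref{eq:diagartin}) together with the description of the Abelianisation of $B_{k,n}$ supplied by Proposition~\ref{lem:presgammaabel} (applied with $g=0$), and an analogous description of the Abelianisation of $B_n$. Concretely, $G_k$ is by definition the kernel of the induced surjection $\bar\psi\colon B_{k,n}/\Gamma_2(B_{k,n}) \to B_n/\Gamma_2(B_n)$, so the task reduces to identifying both groups and the map between them explicitly in the four cases $k=n=1$, $k\geq 2, n=1$, $k=1, n\geq 2$, $k,n\geq 2$.

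First I would invoke Proposition~\ref{lem:presgammaabel} with $g=0$ to get $B_{k,n}/\Gamma_2(B_{k,n}) \cong \Z^{|\widehat S|+1}$, freely generated by $\widehat S \cup \{\zeta\}$; in particular the generator $\zeta$ (the image of any $\zeta_i$) is always present, the generator $\sigma$ (the image of any $\sigma_i$) appears iff $k\geq 2$, and $\widetilde\sigma$ (the image of any $\widetilde\sigma_i$) appears iff $n\geq 2$. The same proposition applied to $B_n = B_{n,0}$ (or the classical fact about Artin braid groups) gives $B_n/\Gamma_2(B_n) \cong \Z$ generated by $\widetilde\sigma$ if $n\geq 2$, and trivial if $n=1$.

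Next I would identify the map $\bar\psi$. Geometrically $\psi_k$ forgets the first $k$ strings, hence it sends each $\widetilde\sigma_i$ to the corresponding generator of $B_n$ and kills the $\sigma_i$ and the $\zeta_i$; passing to Abelianisations, $\bar\psi$ sends $\widetilde\sigma \mapsto \widetilde\sigma$, $\sigma\mapsto 1$, $\zeta \mapsto 1$. Taking kernels in each of the four cases immediately gives the asserted description of $G_k$: namely $G_k = \langle\zeta\rangle \cong \Z$ when $k=1$, and $G_k = \langle\sigma,\zeta\rangle \cong \Z^2$ when $k\geq 2$. Since $G_k$ is generated by a subset of a free basis of the free Abelian group $B_{k,n}/\Gamma_2(B_{k,n})$, it is automatically a direct factor (a complement being generated by the remaining basis element $\widetilde\sigma$ when $n\geq 2$, and no complement being needed when $n=1$ since then $\bar\psi$ is trivial and $G_k$ is the whole group).

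There is no serious obstacle here; the only point to be slightly careful about is the case $n=1$, where $B_n/\Gamma_2(B_n)$ is trivial so $G_k = B_{k,n}/\Gamma_2(B_{k,n})$, which one must check is still consistent with the stated generating sets $\{\zeta\}$ (when $k=1$) and $\{\sigma,\zeta\}$ (when $k\geq 2$); this is exactly what Proposition~\ref{lem:presgammaabel} yields.
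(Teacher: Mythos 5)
Your proposal is correct and follows essentially the same route as the paper: both identify $B_{k,n}/\Gamma_2(B_{k,n})$ and $B_n/\Gamma_2(B_n)$ as free Abelian groups via Propositions~\ref{lem:presgammaabel} and~\ref{lem:presbngabel}, describe the induced map on Abelianisations explicitly (sending $\tsi\mapsto\tsi$ and $\si,\z\mapsto 1$), and read off $G_k$ as the kernel, namely the free Abelian direct factor with basis $\bigl(\widehat{S}\setminus\brak{\tsi}\bigr)\cup\brak{\z}$. Your extra attention to the degenerate case $n=1$ is consistent with, and implicit in, the paper's argument.
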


\begin{proof} 
Let $k,n\geq 1$. We make use of the notation of Propositions~\ref{presconnues} and~\ref{prop:presMB} for the groups $B_{k,n}$ and $B_{n}$. Applying Propositions~\ref{lem:presgammaabel} and~\ref{lem:presbngabel}, the group~$B_{k,n}/\Gamma_2(B_{k,n})$ is a free Abelian group with basis $\widehat{S} \cup \{\z\}$, and by Proposition~\ref{lem:presbngabel}, $B_n/\Gamma_2(B_n)$ is a free Abelian group with basis $\widehat{S} \setminus \{ \si\}$, where $\widehat{S}$ is as defined in Proposition~\ref{lem:presgammaabel}. 

In terms of the notation of Proposition~\ref{lem:presgammaabel}, the group~$B_{k,n}/\Gamma_2(B_{k,n})$ is a free Abelian group with basis $\widehat{S} \cup \{\z\}$, and by Proposition~\ref{lem:presbngabel}, $B_n/\Gamma_2(B_n)$ is a free Abelian group with basis $\widehat{S} \setminus \{ \si\}$. The homomorphism from $B_{k,n}$ to $B_n$ sends $\tsi_1,\ldots,\tsi_{n-1}$ to $\tsi_1,\ldots,\tsi_{n-1}$ respectively  (if $n\geq 2$), and $\si_1,\ldots,\si_{k-1}$ (if $k\geq 2$) and $\z_1,\ldots, \z_n$ onto the trivial element. Since $r_{k,n}$ identifies the elements of $S$ (resp.\ of $\widetilde{S}$, of $Z$) to $\brak{\si}$ (resp.\ to $\brak{\tsi}$, to $\brak{\z}$), it follows that $G_k$ is the kernel of the homomorphism from~$B_{k,n}/\Gamma_2(B_{k,n})$ to $B_n/\Gamma_2(B_n)$ that sends $\tsi$ to $\tsi$ (if $n\geq 2$), and $\si$ (if $k\geq 2$) and $\z$ onto the trivial element. Hence $G_{k}$ is the free Abelian group with basis $\bigl(\widehat{S}\setminus \brak{\tsi}\bigr)\cup \brak{\z}$. In particular, $G_{k}\cong \Z$ if $k=1$, and $G_{k}\cong \Z^{2}$ if $k\geq 2$.
\end{proof}

\begin{prop} \label{fondhk}
Let $k,n \geq 3$ and $g\geq 1$. The group $G_k\bigl(\surf{g}{}\bigr)$ admits the following presentation:\\
\noindent\textbf{Generating set:} $\{\sigma, \zeta\}\cup AB$;\\
\textbf{Relations:}
\begin{enumerate}[\textbullet]
\item $xy=yx$ for all $x,y\in \{\si, \z\}\cup AB$, $x\neq y$, and $\{x,y\}\notin \bigl\{\{a_i,b_i\} ; i=1, \ldots, g\bigr\}$;
\item $[a_i,b_i] = \sigma^2$ for all $i=1,\ldots,g$.
\end{enumerate}
\end{prop}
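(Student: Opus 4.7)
The plan is to use the normal form and semi-direct product decomposition of $B_{k,n}(\surf{g}{})/\Gamma_3(B_{k,n}(\surf{g}{}))$ given in Corollary~\ref{cor:decgammamix} to identify the subgroup $G_k\bigl(\surf{g}{}\bigr) = \ker \overline{\psi}_k$ explicitly. First I would derive, by applying Corollary~\ref{cor:decgamma} with the number of strings equal to $n$ and the number of punctures equal to $0$, an analogous normal form for $B_n(\surf{g}{})/\Gamma_3(B_n(\surf{g}{}))$: every element is uniquely written as $\tsi^q \prod_i \tia_i^{\widetilde{m}_i} \prod_i \tib_i^{\widetilde{n}_i}$. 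Since $\psi_k$ geometrically forgets the first $k$ strings, the induced map $\overline{\psi}_k$ sends each of $\sigma, \zeta, a_i, b_i$ to the identity and each of $\tsi, \tia_i, \tib_i$ to the corresponding generator. Feeding the normal form of Corollary~\ref{cor:decgammamix}(ii) into $\overline{\psi}_k$ and invoking uniqueness in the codomain, an element lies in $G_k\bigl(\surf{g}{}\bigr)$ if and only if $q = \widetilde{m}_i = \widetilde{n}_i = 0$ for all~$i$.

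Consequently, $G_k\bigl(\surf{g}{}\bigr)$ consists exactly of the elements of the form $\sigma^p \zeta^r \prod_i a_i^{m_i} \prod_i b_i^{n_i}$, and so is generated by $\{\sigma, \zeta\} \cup AB$. The relations in the statement all hold in $G_k\bigl(\surf{g}{}\bigr)$ since they already hold in the ambient group by Proposition~\ref{lem:presgamma}. This provides a surjective homomorphism $\pi \colon \widetilde{G} \to G_k\bigl(\surf{g}{}\bigr)$, where $\widetilde{G}$ denotes the abstract group defined by the presentation in the statement.

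To conclude, I need to show that $\pi$ is injective. The idea is to identify $\widetilde{G}$ with a semi-direct product $(\Z^2 \times \Z^g) \rtimes \Z^g$ in complete analogy with Corollary~\ref{cor:decgammamix}(i): the factor $\Z^2$ is generated by $\{\sigma, \zeta\}$, the second factor $\Z^g$ by $\{a_1, \ldots, a_g\}$, and the acting factor $\Z^g$ by $\{b_1, \ldots, b_g\}$, with $b_i$ multiplying $a_i$ by $\sigma^{-2}$ and fixing all other listed generators; this action is forced by the relation $[a_i, b_i]=\sigma^2$ together with the listed commutations. As in the proof of Corollary~\ref{cor:decgammamix}(i), a standard semi-direct product argument (in the style of~\cite{joh}) yields a unique normal form $\sigma^p \zeta^r \prod_i a_i^{m_i} \prod_i b_i^{n_i}$ for elements of $\widetilde{G}$. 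Since $\pi$ sends such a normal form to the element of $G_k\bigl(\surf{g}{}\bigr)$ with the same exponents, and since Corollary~\ref{cor:decgammamix}(ii) guarantees that distinct tuples $(p,r,m_i,n_i)$ give distinct elements in the ambient group, $\pi$ is injective.

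The main obstacle I anticipate is the bookkeeping in the last step: one must verify that the prescribed action of the generators~$b_j$ on the free Abelian group generated by $\{\sigma, \zeta, a_1, \dots, a_g\}$ genuinely extends to a homomorphism into its automorphism group, and that no further relations among $\sigma, \zeta, a_i, b_i$ are hidden in the larger presentation of Proposition~\ref{lem:presgamma} once the generators $\tsi, \tia_i, \tib_i$ are dropped. Once this is settled, the recognition of $\widetilde{G}$ as the expected semi-direct product and the injectivity of $\pi$ are formal.
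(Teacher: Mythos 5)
Your proposal is correct and follows essentially the same route as the paper: identify $G_k\bigl(\surf{g}{}\bigr)=\ker\overline{\psi}_k$ as the subgroup generated by $\{\sigma,\zeta\}\cup AB$ via the normal forms of Corollaries~\ref{cor:decgammamix}(ii) and~\ref{cor:decgamma}(ii), then verify that the listed relations suffice by showing every word reduces to a normal form $\sigma^p\zeta^r\prod_i a_i^{m_i}\prod_i b_i^{n_i}$ that is unique in the ambient group. The only (cosmetic) difference is that you establish completeness of the relations by realising the presented group as $(\Z^2\times\Z^g)\rtimes\Z^g$ in the style of Corollary~\ref{cor:decgammamix}(i), whereas the paper argues by direct word rewriting; both yield the same normal-form uniqueness, and the worry you raise about hidden relations is already dispelled by the injectivity of your map $\pi$ on normal forms.
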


\begin{proof}
From the commutative diagram~(\ref{diagramexactsequencea}) of short exact sequences, the homomorphism $\Phi_{k}: B_{k}(\surf{g,n}{})\to G_k\bigl(\surf{g}{}\bigr)$ is the restriction of $\rho_{k,n}$ to $B_{k}(\surf{g,n}{})$. 
In terms of the presentations of Propositions~\ref{presconnues} and~\ref{prop:presMB}, $\psi_{k}$ is defined by $\psi_{k}(x) = x$ for all $x\in \widetilde{S}\cup\widetilde{AB}$, and $\psi_{k}(x) = 1$ for all $x\in S\cup AB\cup Z$. It follows from Proposition~\ref{lem:presgamma} and the commutativity of the diagram~(\ref{diagramexactsequencea}) that the subgroup of $B_{k,n}(\surf{g}{})/ \Gamma_3(B_{k,n}(\surf{g}{}))$ 
generated by  $\{\si, \zeta\}\cup AB$ is included in~$G_k\bigl(\surf{g}{}\bigr)$. Conversely, let $\gamma\in B_{k,n}(\surf{g}{})/ \Gamma_3(B_{k,n}(\surf{g}{}))$, that we write in the form of equation~(\ref{eq:gamma}). Since $\overline{\psi}_{k}$ is induced by $\psi_{k}$, we see that $\overline{\psi}_{k}(\gamma)=\tsi^q \prod_{i=1}^g \tia_i^{\mskip 4mu\widetilde{m}_i} \prod_{i=1}^g \tib_i^{\mskip 4mu\widetilde{n}_i}$. Corollary~\ref{cor:decgamma}(ii) implies that $\gamma\in \ker{(\overline{\psi}_{k})}$ if and only if $q = \widetilde{m}_i = \widetilde{n}_i = 0$ for all $i=1,\ldots,g$, and so $\gamma\in \ang{\{\si, \zeta\}\cup AB}$ by equation~(\ref{eq:gamma}). Consequently, $G_k\bigl(\surf{g}{}\bigr)$ is the subgroup of $B_{k,n}(\surf{g}{})/\Gamma_3(B_{k,n}(\surf{g}{}))$ generated by $\{\si, \zeta\}\cup AB$. Since the relations given in the statement of the proposition hold in~$B_{k,n}(\surf{g}{})/ \Gamma_3(B_{k,n}(\surf{g}{}))$, they also hold in~$G_k\bigl(\surf{g}{}\bigr)$. Moreover, starting from any word~$w$ on the elements of $\{\sigma, \zeta\}\cup AB$ and their inverses, it follows from just these relations (and the relations $x^{-1}x=xx^{-1} = 1$ for $x\in \{\sigma, \zeta\}\cup AB$) that $w$ may be transformed into the unique word of the form $\sigma^p \zeta^r  \prod_{i=1}^g a_i^{m_i}\prod_{i=1}^g b_i^{n_i}$, $p,r,m_{i},n_{i}\in \Z$,  that represents the same element in~$B_{k,n}(\surf{g}{})/ \Gamma_3(B_{k,n}(\surf{g}{}))$. This proves that the relations given in the proposition are indeed a complete set of relations for~$G_k\bigl(\surf{g}{}\bigr)$ for the generating set $\{\sigma, \zeta\}\cup AB$.
\end{proof}

Let $k,n\geq 3$ and $g\geq 1$. The homomorphism~$\gamma_{k}:G_{k}\to G_k\bigl(\surf{g}{}\bigr)$, which is the restriction of $\alpha_{k,n}$ to $G_{k}$, was seen to be injective in Theorem~\ref{main_thm_2}(i).

\begin{cor}\label{cor:decgamma4}
Let $g\geq 1$.
\begin{enumerate}[(i)]
\item Let $k\geq 1$ and $n\geq 1$. Then the homomorphism~$\Phi_{k}: B_{k}(\surf{g,n}{})\to G_k\bigl(\surf{g}{}\bigr)$ factors through the homomorphism $\rho_{k}: \bsurf{k}{g,n}{}\to \bsurf{k}{g,n}{}/\Gamma_3(\bsurf{k}{g,n}{})$. 

\item Let $k,n\geq 3$. The group~$G_k\bigl(\surf{g}{}\bigr)$ is isomorphic to a semi-direct product of the form~$(\mathbb{Z}^2\times\mathbb{Z}^{g})\rtimes \mathbb{Z}^{g}$. Its centre is a free Abelian group with basis~$\{\si,\z\}$ and is equal to $\gamma_{k}(G_{k})$. Moreover, every element of $G_k\bigl(\surf{g}{}\bigr)$ may be written uniquely in the form $\si^p\z^q\prod_{i=1}^g a_i^{m_i}\prod_{i=1}^g b_i^{n_i}$, where $p,q,m_{i},n_{i}\in \Z$.

\item Let $k,n\geq 3$. The groups~$G_k\bigl(\surf{g}{}\bigr)$ and $M_k\bigl(\surf{g}{}\bigr) := \bigl(\bsurf{k}{g,n}{}/\Gamma_3(\bsurf{k}{g,n}{})\bigr)\bigl/_{\z_1= \cdots = \z_n}\bigr.$ are isomorphic. 
Moreover, if $q_k: \bsurf{k}{g,n}{}/\Gamma_3(\bsurf{k}{g,n}{}) \to G_k\bigl(\surf{g}{}\bigr)$ is a homomorphism for which $\Phi_k = q_k\circ \rho_k$ then $q_k$ induces an isomorphism from  $M_k\bigl(\surf{g}{}\bigr) $ to~$G_k\bigl(\surf{g}{}\bigr)$.
\end{enumerate}
\end{cor}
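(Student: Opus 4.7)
The plan is to exploit the presentation of $G_k\bigl(\surf{g}{}\bigr)$ provided by Proposition~\ref{fondhk} in parallel with the presentation of $\bsurf{k}{g,n}{}/\Gamma_3(\bsurf{k}{g,n}{})$ given in Proposition~\ref{lem:presbng} and that of $\bsurf{k,n}{g}{}/\Gamma_3(\bsurf{k,n}{g}{})$ given in Proposition~\ref{lem:presgamma}. All three parts should follow from these presentations together with the semi-direct product / centre computations already carried out in Corollaries~\ref{cor:decgammamix} and~\ref{cor:decgamma}.

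For part~(i), I would simply observe that $B_k(\surf{g,n}{})$ is a subgroup of $B_{k,n}(\surf{g}{})$, so that $\Gamma_3(B_k(\surf{g,n}{}))\subseteq \Gamma_3(B_{k,n}(\surf{g}{}))$; since $\Phi_k$ is the restriction of~$\rho_{k,n}$ to $B_k(\surf{g,n}{})$ and $\rho_{k,n}$ kills $\Gamma_3(B_{k,n}(\surf{g}{}))$, the homomorphism $\Phi_k$ kills $\Gamma_3(B_k(\surf{g,n}{}))$ and therefore factors through~$\rho_k$. For part~(ii), I would define $\phi:F\!A(\{b_1,\ldots,b_g\})\to\operatorname{Aut}(F\!A(\{\si,\z,a_1,\ldots,a_g\}))$ by $\phi(b_i)(a_i)=\si^{-2}a_i$ and $\phi(b_i)$ trivial otherwise, check this extends to a well-defined homomorphism, and verify that the resulting semi-direct product $(\Z^2\times\Z^g)\rtimes\Z^g$ admits exactly the presentation given in Proposition~\ref{fondhk}. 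The normal form in the statement is then the standard normal form in such a semi-direct product. For the centre, $\si$ and $\z$ are central by Proposition~\ref{fondhk}, and conversely, if a normal form $\gamma = \si^p\z^q\prod a_i^{m_i}\prod b_i^{n_i}$ has some $m_j$ or $n_j$ nonzero, then a direct computation (identical in spirit to the one in Corollary~\ref{cor:decgammamix}(iii)) yields $[\gamma,b_j]=\si^{-2m_j}$ or $[\gamma,a_j]=\si^{2n_j}$, which is nontrivial since $\si$ has infinite order. The identification $\gamma_k(G_k)=\langle\si,\z\rangle$ then follows from Lemma~\ref{lemGkpres} (which identifies $G_k$ as free Abelian on $\{\si,\z\}$) and the fact that $\gamma_k$ is the restriction of $\alpha_{k,n}$, which sends $\si\mapsto\si$ and $\z\mapsto\z$ by Corollary~\ref{cor:themain2iii}(ii).

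For part~(iii), I would first read off a presentation of $M_k\bigl(\surf{g}{}\bigr)$ directly from Proposition~\ref{lem:presbng}: the generators $\{\si\}\cup AB\cup Z$ become $\{\si,\z\}\cup AB$ after imposing $\z_1=\cdots=\z_n=:\z$, and the defining relations reduce to exactly those of Proposition~\ref{fondhk}. This identifies $M_k\bigl(\surf{g}{}\bigr)$ with $G_k\bigl(\surf{g}{}\bigr)$ abstractly. For the second statement, Proposition~\ref{lem:presgamma} guarantees that $\rho_{k,n}(\z_1)=\cdots=\rho_{k,n}(\z_n)=\z$, so $\Phi_k(\z_i)=\z$ for every $i$; hence $q_k(\rho_k(\z_i))=\z$ for all $i$, and $q_k$ factors through the quotient defining $M_k\bigl(\surf{g}{}\bigr)$. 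The resulting homomorphism $\overline{q}_k:M_k\bigl(\surf{g}{}\bigr)\to G_k\bigl(\surf{g}{}\bigr)$ is surjective because the image of $\Phi_k$ is precisely $G_k\bigl(\surf{g}{}\bigr)$ by the definition of the lower row of~(\ref{diagramexactsequencea}), and it carries the generators of the presentation of $M_k\bigl(\surf{g}{}\bigr)$ onto the corresponding generators of the presentation of $G_k\bigl(\surf{g}{}\bigr)$, so by the presentation comparison of the previous step it is an isomorphism. The main obstacle, such as it is, lies in part~(iii): one must be careful to verify that the abstract matching of presentations really is realised by the specific map $\overline{q}_k$ induced by $q_k$, rather than by some other \emph{a priori} isomorphism.
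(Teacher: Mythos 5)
Your proposal is correct and follows essentially the same route as the paper: part (i) via the observation that $G_k\bigl(\surf{g}{}\bigr)$ sits inside a $\Gamma_3$-quotient so that $\Phi_k$ kills $\Gamma_3(B_k(\surf{g,n}{}))$, part (ii) by repeating the semi-direct product, normal form and centre arguments of Corollary~\ref{cor:decgammamix} using Proposition~\ref{fondhk} and Lemma~\ref{lemGkpres}, and part (iii) by matching the presentation of Proposition~\ref{lem:presbng} modulo $\z_1=\cdots=\z_n$ against that of Proposition~\ref{fondhk}. Your explicit check that $\overline{q}_k$ sends generators to the corresponding generators (so that the abstract coincidence of presentations is realised by the induced map) is exactly the point the paper leaves implicit.
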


\begin{proof}\mbox{}
\begin{enumerate}[(i)]
\item From the commutative diagram~(\ref{diagramexactsequencea}) of short exact sequences, $G_k\bigl(\surf{g}{}\bigr)$ is a subgroup of the quotient $\bsurf{k,n}{g}{}/\Gamma_3(\bsurf{k,n}{g}{})$, so $[x,[y,z]] = 1$ for all $x,y,z\in G_k\bigl(\surf{g}{}\bigr)$, and hence $\Phi_k$ factors through $\rho_{k}: \bsurf{k}{g,n}{}\to \bsurf{k}{g,n}{}/\Gamma_3(\bsurf{k}{g,n}{})$.

\item The result follows from Proposition~\ref{fondhk} and Lemma~\ref{lemGkpres} using arguments similar to those given in the proof of Corollary~\ref{cor:decgammamix}.

\item This is a consequence of the presentations of the groups~$\bsurf{k}{g,n}{}/\Gamma_3(\bsurf{k}{g,n}{})$ and $G_k\bigl(\surf{g}{}\bigr)$ given in Propositions~\ref{lem:presbng} and~\ref{fondhk} respectively.\qedhere
\end{enumerate}
\end{proof}

We are now in a position to prove Theorem~\ref{main_thm} and to finish the proof of Theorem~\ref{main_thm_2}. 
\begin{proof}[Proof of Theorem~\ref{main_thm}]
The proof is a straightforward consequence of Proposition~\ref{fondhk}. Indeed, for $k\ge 3$, the group $G_\Sigma$ introduced in~\cite[Section~3]{HK} was abstractly defined by the group presentation of Proposition~\ref{fondhk}. Hence there exists a canonical isomorphism of groups~$\iota: G_\Sigma\to G_k\bigl(\surf{g}{}\bigr)$. Moreover, the homomorphism $\Phi_k: B_k(\surf{g}{, n})\to G_k\bigl(\surf{g}{}\bigr)$ is the restriction of $\rho_{k,n}: B_{k,n}(\surf{g}{})\to B_{k,n}(\surf{g}{})/ \Gamma_3(B_{k,n}(\surf{g}{}))$, and by Proposition~\ref{lem:presgamma}, is defined by $\Phi_k(S) = \brak{\si}$, $\Phi_k(Z) = \brak{\z}$,  and $\Phi_k(x) = x$ for all $x\in AB$. So considering the definition of the homomorphism~$\Phi_\Sigma$ given in~\cite[page~266]{HK}, we conclude that~$\iota\circ\Phi_\Sigma = \Phi_k$ as required.
\end{proof}

\begin{cor}\label{mainthipr2} Let  $k,n\geq 3$ and $g\geq 1$. Let $G$ be a group, and $\Phi_G : \bsurf{k}{g,n}{} \to G$ be a homomorphism.
\begin{enumerate}[(i)] 
\item The following conditions are equivalent:
\begin{enumerate}[(a)]
\item the homomorphism $\Phi_G$ factors through~$\Phi_{k}: \bsurf{k}{g,n}{}\to G_k\bigl(\surf{g}{}\bigr)$.
\item the restriction of $\Phi_G$ to $B_{k}(\D_n)$ factors through~$p_{k}: B_{k}(\D_n)\to G_k$.
\item there exist $\si,\zeta \in G$ such that $\Phi_{G}(S)= \{\si\}$ and $\Phi_{G}(Z)= \{\z\}$.
\end{enumerate}

\item 
Let $\Phi_{G,2}: G_k\to G$ be a homomorphism such that $\Phi_{G,2}\circ p_{k}$ is the restriction of~$\Phi_{G}$ to $B_{k}(\D_n)$. Then there exists a homomorphism $\Phi_{G,3}: G_k\bigl(\surf{g}{}\bigr)\to G$ such that $\Phi_{G,3}\circ \Phi_{k}=\Phi_{G}$. Further, $\Phi_{G,3}$ is injective if and only if $\Phi_{G,2}$ is injective.
\end{enumerate}
\end{cor}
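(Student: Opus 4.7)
The plan is to establish part~(i) via a short cycle of implications leveraging Corollary~\ref{themain2iv}(i) together with the identification $M_k\bigl(\surf{g}{}\bigr) \cong G_k\bigl(\surf{g}{}\bigr)$ of Corollary~\ref{cor:decgamma4}(iii), and then to deduce part~(ii) from (b)$\Rightarrow$(a) combined with the general injectivity criterion supplied by Lemma~\ref{lem:equivinjective}.

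For the equivalences in part~(i), I would start with (a)$\Rightarrow$(c), which is immediate: since $\Phi_k$ is the restriction of $\rho_{k,n}$ to $B_k(\surf{g}{, n})$, Proposition~\ref{lem:presgamma} yields $\Phi_k(S) = \{\si\}$ and $\Phi_k(Z) = \{\z\}$, so any factorisation $\Phi_G = \Phi_{G,3}\circ \Phi_k$ at once gives~(c). For the equivalence (b)$\Leftrightarrow$(c), Lemma~\ref{lemGkpres} tells us that $G_k$ is free Abelian on $\{\si,\z\}$ and that $p_k$ collapses $S$ and $Z$ onto these two generators; the direction (b)$\Rightarrow$(c) is then direct, while (c)$\Rightarrow$(b) only requires the observation that $\Phi_G(\si_1)$ and $\Phi_G(\z_1)$ commute in $G$, which I would deduce from relation~(a.3) applied to some $\si_i$ with $i\neq 1$ (available because $k\geq 3$).

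The crucial implication is (c)$\Rightarrow$(a), which I would carry out as follows. Under hypothesis~(c), Corollary~\ref{themain2iv}(i) (whose condition~(c) is subsumed by ours) already provides a factorisation $\Phi_G = \widetilde{\Phi}\circ \rho_k$ through $B_k(\surf{g}{, n})/\Gamma_3(B_k(\surf{g}{, n}))$. The extra hypothesis $\Phi_G(Z) = \{\z\}$ forces $\widetilde{\Phi}$ to identify all the $\rho_k(\z_j)$, so $\widetilde{\Phi}$ factors further through the quotient $M_k\bigl(\surf{g}{}\bigr)$ of Corollary~\ref{cor:decgamma4}(iii). Composing with the inverse of the isomorphism $M_k\bigl(\surf{g}{}\bigr) \cong G_k\bigl(\surf{g}{}\bigr)$ induced there by any $q_k$ with $\Phi_k = q_k\circ \rho_k$ yields the desired $\Phi_{G,3}: G_k\bigl(\surf{g}{}\bigr) \to G$ satisfying $\Phi_{G,3}\circ \Phi_k = \Phi_G$.

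For part~(ii), existence of $\Phi_{G,3}$ follows from (b)$\Rightarrow$(a) just established. For the injectivity equivalence, the plan is to invoke Lemma~\ref{lem:equivinjective} with $K=G_k$, $H=G_k\bigl(\surf{g}{}\bigr)$, $\gamma=\gamma_k$, $\tau=\Phi_{G,3}$ and $\widetilde{\tau}=\Phi_{G,2}$; the identity $\Phi_{G,2}=\Phi_{G,3}\circ\gamma_k$ follows from surjectivity of $p_k$ together with the relation $\Phi_k\circ\iota_k = \gamma_k\circ p_k$ from Theorem~\ref{main_thm_2}(i). The two hypotheses $\Gamma_2(G_k\bigl(\surf{g}{}\bigr)) \subseteq \Im{\gamma_k}$ and $Z(G_k\bigl(\surf{g}{}\bigr)) \subseteq \Im{\gamma_k}$ both fall out of Proposition~\ref{fondhk} and Corollary~\ref{cor:decgamma4}(ii), since the presentation of $G_k\bigl(\surf{g}{}\bigr)$ shows that it is nilpotent of class two (so $\Gamma_2 \subseteq Z$) and its centre is precisely $\gamma_k(G_k)$. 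The only genuine obstacle is organising the quotient step in (c)$\Rightarrow$(a) cleanly through $M_k\bigl(\surf{g}{}\bigr)$; everything else is routine bookkeeping using the results of Section~\ref{section3}.
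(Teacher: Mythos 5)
Your proposal is correct and follows essentially the same route as the paper: the key implication (c)$\Rightarrow$(a) via Corollary~\ref{themain2iv}(i), the factorisation through $M_k\bigl(\surf{g}{}\bigr)$ and the isomorphism of Corollary~\ref{cor:decgamma4}(iii), and the injectivity equivalence in part~(ii) via Lemma~\ref{lem:equivinjective} all coincide with the published argument. The only (harmless) deviation is organisational: you close the cycle by proving (b)$\Leftrightarrow$(c) directly from the free Abelian structure of $G_k$ and the commutation relation (a.3), whereas the paper proves (a)$\Rightarrow$(b) and (b)$\Rightarrow$(c); just make sure to record explicitly that $\gamma_k$ is injective (Theorem~\ref{main_thm_2}(i)) when invoking the ``if and only if'' clause of Lemma~\ref{lem:equivinjective}.
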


\begin{proof}\mbox{}
\begin{enumerate}[(i)]
\item Suppose first that $\Phi_G$ factors through~$\Phi_{k}$. By the proof of Theorem~\ref{main_thm}, $\Phi_{k}(S) = \{\si\}$ and $\Phi_{k}(Z) = \{\z\}$, so using the same notation for the corresponding elements of $G$, there exist $\si,\zeta \in G$ such that $\Phi_{G}(S)= \{\si\}$ and $\Phi_{G}(Z)= \{\z\}$, hence~(a) implies~(c). Since $\gamma_k\circ p_k = \Phi_k\circ\iota_k$ by Theorem~\ref{main_thm_2}(i), the restriction of $\Phi_G$ to $B_{k}(\D_n)$ factors through~$p_{k}$, so~(a) implies~(b).

Now suppose that~(c) holds. Then $\Phi_{G}(S)= \{\si\}$, and by Corollary~\ref{themain2iv}(i), $\Phi_G$ factors through $\rho_k$, so there exists a homomorphism $\Phi'_G: \bsurf{k}{g,n}{}/\Gamma_3(\bsurf{k}{g,n}{})\to G$ such that $\Phi_G = \Phi'_G \circ\rho_k$. Further, $\Phi_{G}(Z)= \{\z\}$, and letting $Z$ also denote the usual subset of elements of $\bsurf{k}{g,n}{}$, we have that $\rho_{k}(Z)=Z$, and thus $\Phi'_G(Z)=\{ \z \}$. So $\Phi'_G$ factors through the canonical surjection $\bsurf{k}{g,n}{}/\Gamma_3(\bsurf{k}{g,n}{}) \to M_k\bigl(\surf{g}{}\bigr)$. Taking $q_k: \bsurf{k}{g,n}{}/\Gamma_3(\bsurf{k}{g,n}{}) \to G_k\bigl(\surf{g}{}\bigr)$ to be a homomorphism as in the statement of Corollary~\ref{cor:decgamma4}(iii) that satisfies $\Phi_k = q_k\circ \rho_k$, and applying the resulting induced isomorphism between $G_k\bigl(\surf{g}{}\bigr)$ and $M_k\bigl(\surf{g}{}\bigr)$, we obtain a homomorphism $\Phi_{G,3}: G_k\bigl(\surf{g}{}\bigr)\to G$ such that $\Phi_G' = \Phi_{G,3}\circ q_k$. Hence $\Phi_G = \Phi'_G \circ\rho_k=\Phi_{G,3}\circ q_k\circ\rho_k= \Phi_{G,3}\circ \Phi_{k}$, therefore $\Phi_G$ factors through $\Phi_k$, and thus~(c) implies~(a).


Finally, suppose that~(b) holds. Then $p_{k}(S)=r_{k,n}(S)=\{ \si \}$ and $p_{k}(Z)=r_{k,n}(Z)=\{ \z \}$ by the commutative diagram~(\ref{eq:diagartin}) and Proposition~\ref{lem:presgammaabel}. Since $\Phi_{G}\circ \iota_{k}$ factors through $p_{k}$, it follows that there exist $\si,\zeta \in G$ such that $\Phi_{G}(S)= \{\si\}$ and $\Phi_{G}(Z)= \{\z\}$, hence~(b) implies~(c).

\item Since the restriction of $\Phi_G$ to $B_{k}(\D_n)$ factors through~$p_{k}$, it follows from part~(i) that $\Phi_G$ factors through~$\Phi_{k}$, and so there exists a homomorphism $\Phi_{G,3}: G_k\bigl(\surf{g}{}\bigr)\to G$ such that $\Phi_{G,3}\circ \Phi_{k}=\Phi_{G}$. By hypothesis, we have $\Phi_{G}\circ \iota_{k}=\Phi_{G,2}\circ p_{k}$, so $\Phi_{G,2}\circ p_{k}= \Phi_{G}\circ \iota_{k}= \Phi_{G,3}\circ \Phi_{k}\circ \iota_{k}= \Phi_{G,3}\circ \gamma_{k} \circ p_{k}$. The surjectivity of $p_{k}$ implies that $\Phi_{G,2}= \Phi_{G,3}\circ \gamma_{k}$. Now $\gamma_{k}$ is injective by Theorem~\ref{main_thm_2}(i), and applying Proposition~\ref{fondhk} and Corollary~\ref{cor:decgamma4}(iii), we see that $\Gamma_{2}(G_k\bigl(\surf{g}{}\bigr))\subset Z(G_k\bigl(\surf{g}{}\bigr))$ and $Z(G_k\bigl(\surf{g}{}\bigr))$ is equal to the image of $\gamma_{k}$. The result then follows from Lemma~\ref{lem:equivinjective}.\qedhere
\end{enumerate}   
\end{proof}

This enables us to prove Theorem~\ref{main_thm_2}(ii).
\begin{proof}[Proof of Theorem~\ref{main_thm_2}(ii)]
With the notation of the proof of Corollary~\ref{mainthipr2}(iv), the existence and injectivity of $\Phi_{G,2}$ imply those of $\Phi_{G,3}: G_k\bigl(\surf{g}{}\bigr)\to G$. The surjectivity of $\Phi_{G,3}$ follows from that of $\Phi_{G}$. We thus have $\Phi_{G,3}\circ \Phi_{k}=\Phi_{G}$, where $\Phi_{G,3}$ is an isomorphism.
\end{proof}

Taking into account the proof of parts~(i),~(iii) and~(iv) in Section~\ref{sec:metaquotients2}, this concludes the proof of Theorem~\ref{main_thm_2}. 

\smallskip

These results allow us also to do make some remarks on the extension of the length function from $B_n$ to surface braid groups.
Let $k\geq 3$. The projection $r_{k}: B_k\to B_{k}/\Gamma_2(B_{k})$ coincides (up to isomorphism) with the usual length function $\lambda: B_k\to \Z$ on $B_{k}$. If $g\geq 1$, we claim that, up to isomorphism, the only surjective homomorphism extending $\lambda$ from $B_k$ to $\bsurf{k}{g}{}$ is $\rho_{k}: \bsurf{k}{g}{}\to \bsurf{k}{g}{}/\Gamma_3(\bsurf{k}{g}{})$. Indeed, let $G$ be a group, and let $\lambda_{\Sigma}: \bsurf{k}{g}{}\to G$ be a surjective homomorphism that extends $\lambda$. So there exists an injective homomorphism $\rho_{G,2}: B_{k}/\Gamma_2(B_{k})\to G$ satisfying $\rho_{G,2}\circ \lambda=\lambda_{\Sigma} \circ \iota_{k}$. Applying Corollary~\ref{mainthipr2}(ii), there exists an isomorphism $\rho_{G,3}: \bsurf{k}{g}{}/\Gamma_3(\bsurf{k}{g}{})\to G$ such that $\rho_{G,3}\circ \rho_{k}=\lambda_{\Sigma}$, which proves the claim.
In the case of surfaces without boundary, we have the following negative result, which as we mentioned at the beginning of Section~\ref{section3}, provides another reason why we only consider surfaces with boundary in this paper.

\begin{prop} \label{fondpropcons2}
Let $n \ge  3$, and let $\Sigma$ be a compact orientable surface of positive genus without boundary. It is not possible to extend  the length function $\lambda: B_n \to \Z$ to $ B_{n}(\Sigma) $. In other words, there does not exist a surjective homomorphism $\lambda_{\Sigma}$ of $B_{n}(\Sigma) $ onto a group $F$ whose restriction $B_n$ coincides with
$\lambda$. 
\end{prop}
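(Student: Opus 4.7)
The plan is to suppose for contradiction that such a surjective homomorphism $\lambda_\Sigma: B_n(\Sigma) \to F$ exists, and to derive a constraint forcing the image of any $\tsi_i$ to be torsion. As in the discussion preceding the statement (and by analogy with Corollary~\ref{themain2iv}(iv)), the hypothesis that $\lambda_\Sigma$ extends $\lambda$ means there is an injective homomorphism $\rho_{F,2}: B_n/\Gamma_2(B_n) \cong \Z \to F$ satisfying $\lambda_\Sigma \circ \iota_n = \rho_{F,2} \circ \lambda$. Setting $x := \rho_{F,2}(1) = \lambda_\Sigma(\tsi_i) \in F$ (which does not depend on $i$, since $\lambda(\si_i) = 1$ for every $i$), the injectivity of $\rho_{F,2}$ guarantees that $x$ has infinite order in $F$.

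The key idea is to pull $\lambda_\Sigma$ back to the surface with boundary $\surf{g}{}$ and apply the results of Section~\ref{sec:metaquotients2}. Since $\Sigma$ of genus $g\geq 1$ is obtained from $\surf{g}{}$ by capping off the boundary, $B_n(\Sigma)$ is the quotient of $\bsurf{n}{g}{}$ by the normal closure of the surface relation $R_S := [\tia_1, \tib_1^{-1}] \cdots [\tia_g, \tib_g^{-1}] \cdot \Delta^{-1}$, where $\Delta := \tsi_1 \tsi_2 \cdots \tsi_{n-1}^2 \cdots \tsi_2 \tsi_1$ (the standard Bellingeri-type presentation of $B_n(\Sigma)$, cf.~\cite{B}). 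Composing $\lambda_\Sigma$ with the canonical quotient $\bsurf{n}{g}{} \to B_n(\Sigma)$ yields a surjective homomorphism $\widetilde\lambda: \bsurf{n}{g}{} \to F$ whose value on $\widetilde{S}$ is the singleton $\{x\}$. Applying Corollary~\ref{themain2iv}(i) (with $k$ there equal to $n$, $n$ there equal to $0$ so that $Z = \vide$, and genus $g$), condition~(c) is satisfied, hence so is~(a): $\widetilde\lambda$ factors through $\rho_n: \bsurf{n}{g}{} \to \bsurf{n}{g}{}/\Gamma_3(\bsurf{n}{g}{})$, and in particular $F$ is a quotient of this class-$2$ nilpotent group.

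It then remains to evaluate $R_S$ in $F$. By Proposition~\ref{lem:presbng}, the relation $[\tia_i, \tib_i] = \tsi^2$ holds in $\bsurf{n}{g}{}/\Gamma_3(\bsurf{n}{g}{})$, and the standard class-$2$ identity $[a, b^{-1}] = [a, b]^{-1}$ then gives $[\tia_i, \tib_i^{-1}] = \tsi^{-2}$ in the same quotient. Projecting further to $F$ and setting $A_i := \lambda_\Sigma(\tia_i)$ and $B_i := \lambda_\Sigma(\tib_i)$, one obtains $\prod_{i=1}^{g} [A_i, B_i^{-1}] = x^{-2g}$, while $\lambda_\Sigma(\Delta) = x^{2(n-1)}$ by definition. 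The identity $R_S = 1$ in $B_n(\Sigma)$ thus forces $x^{2(n+g-1)} = 1$ in $F$; since $n \geq 3$ and $g \geq 1$ the exponent $2(n+g-1) \geq 6$ is strictly positive, so $x$ is torsion in $F$, contradicting the infinite order forced by the injectivity of $\rho_{F,2}$. The main obstacle is the initial reduction, namely recognising that the surface-with-boundary machinery of Corollary~\ref{themain2iv} can be pulled back through the quotient $\bsurf{n}{g}{} \to B_n(\Sigma)$; once this is in place, the nilpotent class-$2$ calculation makes the surface relation yield the required torsion constraint on $x$.
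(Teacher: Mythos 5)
Your proof is correct and follows essentially the same strategy as the paper's: both arguments reduce to showing that the common image $x$ of the $\si_i$ must satisfy $x^{2(n+g-1)}=1$ because of the surface relation, contradicting the infinite order forced by the extension hypothesis. The only difference is that the paper simply cites \cite{B} and \cite{BGG} for the torsion relation, whereas you derive it in-house by factoring through $\Gamma_3$ via Corollary~\ref{themain2iv} and evaluating the capping relation in the class-$2$ quotient of Proposition~\ref{lem:presbng} --- a self-contained execution of the same step.
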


\begin{proof}
Let $F$ be a group, and let $\lambda_{\Sigma}: B_{n}(\Sigma) \to F$ be a homomorphism that extends $\lambda$. Then there exists $\sigma\in F$ such that $\sigma=\lambda_{\Sigma}(\sigma_1)=\cdots= \lambda_{\Sigma}(\sigma_{n-1})$. The group presentation of $ B_{n}(\Sigma)$ given in~\cite{B} implies that $\sigma^{2(n+g-1)}=1$ (for further details, see the proof of~\cite[Theorem~1]{BGG}). The result then follows because $\sigma$ is of finite order but $\lambda(\sigma_{1})$ is of infinite order.
\end{proof}

\smallskip

In~\cite[Section~3]{HK}, the authors consider also a group $H_\Sigma$ that is defined by its group presentation. One may check using this presentation that~$H_\Sigma$ is isomorphic to the quotient $\bigl(B_{k,n}(\surf{g}{})/ \Gamma_3(B_{k,n}(\surf{g}{}))\bigr)/_{\tsi = 1}$. In~\cite[Theorem~4.3(ii)]{HK}, a rigidity result is proved for this group in the case $k\geq 3$. 
 We conclude this section with an alternative proof of this theorem, based mainly on the results and arguments of Section~\ref{section3}.

\begin{prop}\label{prop:final}
Let $k,n\geq 3$. Let $H$ be a group, and let $\Phi_H : B_{k,n}(\surf{g}{})\to H$ be a homomorphism such that:
\begin{enumerate}[(i)]
\item the restriction of $\Phi_H$ to~$B_{k,n}$ factors through $r_{k,n}$, in other words, there exists a homomorphism $\Phi_{H,2}: B_{k,n}/\Gamma_{2}(B_{k,n})\to H$ such that $\Phi_H\circ \iota_{k,n}=\Phi_{H,2}\circ r_{k,n}$.
\item the kernel of~$\Phi_{H,2}$ is generated by $\tsi$.
\end{enumerate}
Then $\Phi_H$ induces an injective homomorphism from~$H_\Sigma$ to $H$. In particular, if $\Phi_H$ is surjective then the induced homomorphism is an isomorphism.        
\end{prop}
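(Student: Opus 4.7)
The strategy is to mirror the argument already carried out in Corollary~\ref{cor:themain2iii}(iv), but now passing to the further quotient by $\tsi$. The first step is the existence of a factorisation through $H_\Sigma$: by hypothesis~(i), condition~(b) of Corollary~\ref{cor:themain2iii}(i) holds, hence condition~(a) also holds and there exists $\Phi_{H,3} : B_{k,n}(\surf{g}{})/\Gamma_3(B_{k,n}(\surf{g}{})) \to H$ with $\Phi_{H,3}\circ \rho_{k,n} = \Phi_H$, and by the uniqueness in the factorisation we have $\Phi_{H,3}\circ\alpha_{k,n} = \Phi_{H,2}$ as in part~(iv) of that corollary. Since $\alpha_{k,n}(\tsi) = \tsi$ in $B_{k,n}(\surf{g}{})/\Gamma_3(B_{k,n}(\surf{g}{}))$ and $\tsi \in \ker\Phi_{H,2}$ by~(ii), we obtain $\Phi_{H,3}(\tsi) = 1$, so $\Phi_{H,3}$ descends to a homomorphism $\bar\Phi_H : H_\Sigma \to H$ satisfying $\bar\Phi_H\circ q\circ\rho_{k,n} = \Phi_H$, where $q : B_{k,n}(\surf{g}{})/\Gamma_3(B_{k,n}(\surf{g}{})) \to H_\Sigma$ is the canonical projection.

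The core of the proof is the injectivity of $\bar\Phi_H$. The plan is to apply Lemma~\ref{lem:equivinjective} to $\bar\Phi_H$. The ``good'' subgroup $K\hookrightarrow H_\Sigma$ will be the image of $\bar\alpha := q\circ \alpha_{k,n} : B_{k,n}/\Gamma_2(B_{k,n}) \to H_\Sigma$. Since $\alpha_{k,n}$ sends the free Abelian basis $\{\si,\tsi,\z\}$ of $B_{k,n}/\Gamma_2(B_{k,n})$ to $\{\si,\tsi,\z\}$ in $B_{k,n}(\surf{g}{})/\Gamma_3(B_{k,n}(\surf{g}{}))$ (Corollary~\ref{cor:themain2iii}(ii) and Proposition~\ref{lem:presgammaabel}), and $q$ kills $\tsi$ but fixes $\si$ and $\z$, the induced map $\widetilde\alpha : B_{k,n}/\langle\Gamma_2(B_{k,n}),\tsi\rangle \to H_\Sigma$ sends the two generators to $\si$ and $\z$. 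Using the presentation of $H_\Sigma$ obtained from Proposition~\ref{lem:presgamma} by setting $\tsi = 1$ (whereupon the relation $[\tia_i,\tib_i]=\tsi^2$ becomes $[\tia_i,\tib_i]=1$ but $[a_i,b_i]=\si^2$, $[a_i,\tib_i]=[\tia_i,b_i]=\z$ persist), an argument entirely analogous to the proof of Corollary~\ref{cor:decgammamix}(i)--(iii) shows that $\langle \si, \z\rangle$ is free Abelian of rank~$2$. Hence $\widetilde\alpha$ is injective.

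Now apply Lemma~\ref{lem:equivinjective} with $K = B_{k,n}/\langle\Gamma_2(B_{k,n}),\tsi\rangle$, $\gamma = \widetilde\alpha$, $\tau = \bar\Phi_H$, and $\widetilde\tau = \tau\circ\gamma$. From the presentation of $H_\Sigma$ its commutator subgroup is generated by $\si^2$ and $\z$, so $\Gamma_2(H_\Sigma) \subset \langle \si, \z\rangle = \Im\widetilde\alpha$. For the centre: since $\Gamma_2(H_\Sigma)\subset \langle \si,\z\rangle$ is already central, commutators distribute, and for $\gamma = \si^p\z^r\prod a_i^{m_i}\tia_i^{\widetilde m_i}\prod b_i^{n_i}\tib_i^{\widetilde n_i}$ one computes $[\gamma,b_j] = \si^{2m_j}\z^{\widetilde m_j}$ and $[\gamma,a_j] = \si^{-2n_j}\z^{-\widetilde n_j}$, which force $m_j = n_j = \widetilde m_j = \widetilde n_j = 0$ when $\gamma$ is central. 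Therefore $Z(H_\Sigma) = \langle\si,\z\rangle = \Im\widetilde\alpha$. Finally, $\widetilde\tau = \bar\Phi_H\circ\widetilde\alpha$ equals the injective map induced by $\Phi_{H,2}$ on the quotient $B_{k,n}/\langle\Gamma_2(B_{k,n}),\tsi\rangle$ (using hypothesis~(ii) and $\Phi_{H,3}\circ\alpha_{k,n} = \Phi_{H,2}$), so $\widetilde\tau$ is injective. Lemma~\ref{lem:equivinjective} gives the injectivity of $\bar\Phi_H$. The final assertion is immediate: if $\Phi_H$ is surjective, so is $\Phi_{H,3}$, hence so is $\bar\Phi_H$.

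\textbf{Main obstacle.} The only non-formal step is the computation of $Z(H_\Sigma)$ (and the verification that $\{\si,\z\}$ remains free Abelian of rank~$2$ in the further quotient by $\tsi$). Everything else is a direct transfer of the framework of Section~\ref{sec:metaquotients} plus the descent to $H_\Sigma$ afforded by hypothesis~(ii).
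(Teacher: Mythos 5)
Your argument is correct, and it establishes the statement by the same underlying mechanism as the paper: factor $\Phi_H$ through $\rho_{k,n}$ via Corollary~\ref{cor:themain2iii}(i), kill $\tsi$ using hypothesis~(ii) together with $\alpha_{k,n}(\tsi)=\tsi$, and then exploit the fact that all commutators land in the centre, which is exactly the image of the Abelianised classical mixed braid group. The organisational difference is where the injectivity argument takes place. The paper stays in $B_{k,n}(\surf{g}{})/\Gamma_3(B_{k,n}(\surf{g}{}))$ and shows directly that $\ker(\Phi_{H,3})=\langle\tsi\rangle$ by a two-step commutator argument that it describes as ``similar in spirit'' to Lemma~\ref{lem:equivinjective} but cannot literally invoke, since the kernel there is $\langle\tsi\rangle$ rather than trivial; this lets it reuse the normal form and centre computation of Corollary~\ref{cor:decgammamix} without further work. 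You instead descend to $H_\Sigma$ first and then apply Lemma~\ref{lem:equivinjective} verbatim to $\bar\Phi_H$, which is cleaner as a formal deduction but obliges you to re-establish for $H_\Sigma$ itself that $\langle\si,\z\rangle$ is free Abelian of rank two, that $\Gamma_2(H_\Sigma)\subset\langle\si,\z\rangle$, and that $Z(H_\Sigma)=\langle\si,\z\rangle$. You correctly identify this as the one non-formal step; it does go through, either by rerunning the semi-direct product construction of Corollary~\ref{cor:decgammamix}(i) with the relation $[\tia_i,\tib_i]=1$ in place of $[\tia_i,\tib_i]=\tsi^2$, or more quickly by observing from the normal form \reqref{gamma} that $\langle\tsi\rangle$ meets the subgroup generated by $\Omega\setminus\{\tsi\}$ trivially, so that subgroup maps isomorphically onto $H_\Sigma$ and inherits the normal form. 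In short: same engine, different chassis; your version buys a literal application of the lemma at the price of one extra structural verification that the paper's in-place argument avoids.
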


Note that the assumptions~(i) and~(ii) of Proposition~\ref{prop:final} are equivalent to  the following two conditions:
\begin{enumerate}[(a)]
\item the restriction of $\Phi_H$ to $B_{k}(\D_n)$ induces an injective homomorphism from $G_k$  to $H$.
\item $\Phi_H(\widetilde{S}) = \{1\}$.
\end{enumerate}

We remark that conditions~(a) and~(b) correspond to Hypothesis~(\dag) in~\cite[Theorem~4.3(ii)]{HK}.

\begin{proof}
Since the restriction of $\Phi_H$ to~$B_{k,n}$ factors through $r_{k,n}$ using hypothesis~(i), Corollary~\ref{cor:themain2iii}(i) implies that $\Phi_H$ factors through~$\rho_{k,n}$, so there exists a homomorphism $\Phi_{H,3}: B_{k,n}(\surf{g}{})/ \Gamma_3(B_{k,n}(\surf{g}{}))\to H$ such that $\Phi_H = \Phi_{H,3}\circ \rho_{k,n}$. Now $\Phi_H(\widetilde{S}) = \{1\}$ using hypothesis~(ii) and $\rho_{k,n}(\widetilde{S})=\brak{\tsi}$ by Proposition~\ref{lem:presgamma}, so it follows that $\Phi_{H,3}(\tsi) = \{1\}$. Thus the homomorphism~$\Phi_{H,3}$ factors through the projection of the group~$B_{k,n}(\surf{g}{})/\Gamma_3(B_{k,n}(\surf{g}{}))$ onto its quotient~$\bigl(B_{k,n}(\surf{g}{})/ \Gamma_3(B_{k,n}(\surf{g}{}))\bigr)/_{\tsi = 1}$, which from the preceding remarks, we know to be isomorphic to $H_\Sigma$. It remains to prove that the induced homomorphism from this quotient (or equivalently from $H_{\Sigma}$) to $H$ is injective. To do so, first note that $\Phi_{H,3}\circ \alpha_{k,n}= \Phi_{H,2}$, where $\alpha_{k,n}:B_{k,n}/ \Gamma_2(B_{k,n})\to B_{k,n}(\surf{g}{})/ \Gamma_3(B_{k,n}(\surf{g}{}))$ is the homomorphism given by Corollary~\ref{cor:themain2iii}(ii). Since $\alpha_{k,n}\circ r_{k,n}=\rho_{k,n}\circ \iota_{k,n}$, we have $\alpha_{k,n}(\tsi)=\tsi$, and so the subgroup of~$B_{k,n}(\surf{g}{})/\Gamma_3(B_{k,n}(\surf{g}{}))$ generated by~$\tsi$ is contained in $\ker{(\Phi_{H,3})}$. The proof of the converse is similar in sprit to that of Lemma~\ref{lem:equivinjective}. Let $\gamma \in \ker{(\Phi_{H,3})}$, and let $\gamma'\in B_{k,n}(\surf{g}{})/\Gamma_3(B_{k,n}(\surf{g}{}))$. The element~$[\gamma,\gamma']$ belongs to the centre of $ B_{k,n}(\surf{g}{})/\Gamma_3(B_{k,n}(\surf{g}{}))$, and therefore to the image of $\alpha_{k,n}$ by Corollary~\ref{cor:themain2iii}(ii).  Let $h\in B_{k,n}/\Gamma_2(B_{k,n})$ be such that $\alpha_{k,n}(h) = [\gamma,\gamma']$. Then~$\Phi_{H,2}(h) =  \Phi_{H,3}([\gamma,\gamma']) = [\Phi_{H,3}(\gamma),\Phi_{H,3}(\gamma')] = 1$. Therefore $h = \tsi^\ell$ for some $\ell\in \mathbb{Z}$ using hypothesis~(ii), and the relation $\alpha_{k,n}(\tsi)=\tsi$ implies that for all $\gamma'\in B_{k,n}(\surf{g}{})/ \Gamma_3(B_{k,n}(\surf{g}{}))$, the commutator~$[\gamma,\gamma']$ is equal to a power of $\tsi$. Write $\gamma$ in the form of equation~(\ref{eq:gamma}). Then for all $j=1,\ldots,g$, we have $[\gamma,b_j] = (\sigma^{2m_j})\z^{\widetilde{m}_j}$ and $[\gamma,a_j] = (\si^{-2n_j})\z^{-\widetilde{n}_j}$ by Proposition~\ref{lem:presgamma}. Since these commutators are powers of $\tsi$, we deduce that $m_j = n_j = \widetilde{m}_j = \widetilde{n}_j=0$, and so $\gamma$ belongs to the centre of~$B_{k,n}(\surf{g}{})/\Gamma_3(B_{k,n}(\surf{g}{}))$ by Corollary~\ref{cor:decgammamix}(iii). Repeating the argument with $\gamma$ in place of $[\gamma,\gamma']$, there exists $h'\in B_{k,n}/ \Gamma_2(B_{k,n})$ such that $\alpha_{k,n}(h') = \gamma$, so $\Phi_{H,2}(h') = 1$ because $\Phi_{H,3}\circ \alpha_{k,n}= \Phi_{H,2}$, and thus $\gamma = \tsi^\ell$ for some $\ell\in \Z$ by hypothesis~(ii) and the fact that $\alpha_{k,n}(\tsi)=\tsi$. The kernel of  $\Phi_{H,3}$ is therefore the subgroup of~$B_{k,n}(\surf{g}{})/\Gamma_3(B_{k,n}(\surf{g}{}))$ generated by~$\tsi$, and hence the induced homomorphism from $H_\Sigma$ to $H$ is injective. 
\end{proof}


\section{Representations of surface braid groups}\label{section:representations}


In this section, we first describe an algebraic approach to the Burau and Bigelow-Krammer-Lawrence representations that is based on the lower central series. To our knowledge, this description has not appeared in the literature, although it is well known to the experts in the field. We then show why it is not possible
to extend the Burau and Bigelow-Krammer-Lawrence representations to representations of surface braid groups. This latter remark was made in~\cite{HK} under certain conditions of a homological nature. Within a purely algebraic framework, we prove this non-existence result with fewer conditions than those of~\cite{HK}. 

Let us start recalling that $B_{n}$ may be seen as the mapping class group of $\D_n$, thus giving rise to an action of $B_n$ on $\pi_1(\D_n)$, the latter being isomorphic to the free group $F_n$ on $n$ generators~\cite{Bir}. This action coincides with the action by conjugation of $B_n$ on $B_1(\D_n)$ defined by the standard section of~(\ref{eq:sequence}), and gives rise to Artin's (faithful) representation of $B_n$ as a subgroup of the group of automorphisms of $F_n$. The (non-reduced) Burau representation of $B_n$ is then obtained by composing the Artin representation with the Magnus representation associated with the length function~$\ell: B_1(\D_n)\to \mathbb{Z}$ (see for instance~\cite{Bar}), which we identify with the homomorphism $p_1: B_1(\D_n)\to G_1
$ of the commutative diagram~(\ref{eq:diagartin}) (see Lemma~\ref{lemGkpres} for more details).

This representation also has a homological interpretation (see for instance~\cite[Chapter 3]{KT}): the group $B_n$ acts on the infinite cyclic covering $\widetilde{\D}_n$ of $\D_n$, and since the action of $B_n$ on $B_1(\D_n)$ commutes with the length function $p_1$ (whose image is $\Z$), the induced action on the first homology group of $\widetilde{\D}_n$ is the (reduced) Burau representation of $B_n$. 

More generally, if $k \geq 1$, $B_n$, regarded as the mapping class group  of $\D_n$, acts on $\FF_k(\D_n)/S_{k}$ and therefore on its fundamental group $B_k(\D_n)$. Once more, the induced action of $B_n$ on $B_k(\D_n)$ coincides with the action by conjugation of $B_n$ on $B_k(\D_n)$ defined by the standard section of~(\ref{eq:sequence}). In analogy with the case $k=1$, in order to seek (linear) representations of $B_k(\D_n)$, we consider regular coverings associated with its normal subgroups, and we try to see if the induced action on homology is well defined. In other words, we wish to study surjections of $B_k(\D_n)$ onto a group subject to certain constraints. Now suppose that $k>1$, and that the group~$G_k$ is a free Abelian group of rank~$2$ ({\it cf.} Lemma~\ref{lemGkpres}), and consider the homomorphism $p_k$ of the commutative diagram~(\ref{eq:diagartin}). It is easy to check that the action of $B_n$ on $B_k(\D_n)$ commutes with $p_k$, that $B_n$ acts on the regular covering of $\FF_k(\D_n)/S_k$ corresponding to $p_{k}$, and that the induced action on the Borel-Moore middle homology group of this covering space
defines 
a homological representation of $B_n$. When $k=2$, we obtain the famous Bigelow-Krammer-Lawrence representation that is a faithful linear representation (see~\cite{KT} for a complete description of this construction). In~\cite{Z}, it was proved that the corresponding representations are also faithful in the general case $k\geq 2$. In what follows, we will call this family of representations \emph{BKL representations} (the Burau and the usual Bigelow-Krammer-Lawrence representations correspond to the cases $k=1$ and $k=2$). 

With this algebraic constructions of Section~3 in mind, we would like to extend the BKL representations from $B_n$ to $B_n(\surf{g}{})$. To do so, one might seek a projection $\Phi_G: B_k(\surf{g}{, n}) \to G$ onto an Abelian group $G$ that induces an action on the homology of the corresponding covering and whose restriction to $B_{k}(\D_n) \subset B_k(\surf{g}{, n})$ coincides with 
$p_k:  B_k(\D_n) \to G_k$ (more precisely, there is an injective homomorphism~$j_k$ from~$G_k$ to $G$ so that the restriction of $\Phi_G$ to $B_{k}(\D_n)$ coincides with~$j_k\circ p_k$). However, if there existed $\Phi_G$ such that $\Phi_G \circ \beta_* =\Phi_G$ for all $\beta \in  B_n(\surf{g}{})$, where $\beta_*$ denotes the automorphism of $B_k(\surf{g}{, n})$ induced by conjugation by~$\beta$, then the homomorphism $\Phi_G$ would extend to a homomorphism from~$B_{k,n}(\surf{g}{})$ to~$G$ because $G$ is Abelian, and we would obtain a linear representation of $B_n(\surf{g}{})$ in $\operatorname{Aut}_{\Z[G]}\Bigl(H_k^{\text{BM}} \Bigl( \widetilde{E_{k}/{S_k}} \Bigr)\Bigr)$, where  $H_k^{\text{BM}} \left( \widetilde{E_{k}/{S_k}} \right)$ is the Borel-Moore middle homology group of the covering space
of $E_{k}/{S_k}$ and $E_{k}=\FF_k(\surf{g}{, n})$. Unfortunately, the results of Section~\ref{section3} imply that this approach is not valid when $k\ge 2$, even if we do not impose the equalities~$\Phi_G \circ \beta_* =\Phi_G$.

\begin{lem} \label{nolinearext}
Let $k\geq 1$ and $n\geq 1$, and let $G$ be an Abelian group.
\begin{enumerate}[(i)]
\item Let $k\geq 2$. Suppose that there exist homomorphisms $\Phi_G: B_k(\surf{g,n}{}) \to G$  and $j_k: G_k \to G$ that satisfy~$\Phi_G\circ \iota_{k}= j_k \circ p_k$. Then $j_k$ is not injective.

\item Let $k = 1$. For every non-trivial homomorphism $\Phi_G: B_1(\surf{g,n}{}) \to G$, there exists an injective homomorphism~$j_1: G_1 \to G$ that satisfies~$\Phi_G \circ j_{1}= j_1 \circ p_1$. 
\end{enumerate}
\end{lem}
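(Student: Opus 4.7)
The plan is to exploit the difference between the Abelianisation of $B_k(\surf{g}{,n})$ described by Proposition~\ref{lem:presbngabel} and the structure of $G_k$ given by Lemma~\ref{lemGkpres}. Specifically, for $g\geq 1$ and $k\geq 2$ the relation $\si^2 = 1$ holds in $B_k(\surf{g}{,n})/\Gamma_2(B_k(\surf{g}{,n}))$ but does not hold in $G_k$; this relation will yield the obstruction in part~(i), while its absence when $k=1$ will make part~(ii) accessible by an explicit construction.

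For part~(i), the first step is to observe that since $G$ is Abelian, $\Phi_G$ factors through the Abelianisation map $\widehat{r}_k: B_k(\surf{g}{,n}) \to B_k(\surf{g}{,n})/\Gamma_2(B_k(\surf{g}{,n}))$. Because $g\geq 1$ and $k\geq 2$, Proposition~\ref{lem:presbngabel} gives the relation $\widehat{r}_k(\si_1)^2 = 1$, so that $\Phi_G(\iota_k(\si_1^2)) = 1$ in $G$. On the other hand, Lemma~\ref{lemGkpres} asserts that $G_k$ is free Abelian with basis $\{\si, \z\}$, so $p_k(\si_1^2) = \si^2$ is of infinite order in $G_k$. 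The identity $\Phi_G \circ \iota_k = j_k \circ p_k$ will then force $j_k(\si^2) = \Phi_G(\iota_k(\si_1^2)) = 1$, exhibiting $\si^2$ as a nontrivial element of $\ker j_k$, which proves that $j_k$ is not injective.

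For part~(ii), the plan is to give an explicit construction showing that the obstruction above vanishes when $k=1$. Lemma~\ref{lemGkpres} gives $G_1 \cong \Z$ generated by $\z$, the common image of $\z_1,\ldots,\z_n$ under $p_1$, and since $k=1$ forces the set $S$ to be empty, Proposition~\ref{presconnues}(i) implies that $B_1(\surf{g}{,n}) = \pi_1(\surf{g}{,n})$ is free on $AB \cup Z$. Setting $G = \Z$, one can then define a nontrivial homomorphism $\Phi_G: B_1(\surf{g}{,n}) \to \Z$ by $\Phi_G(\z_j) = 1$ for all $j$ and $\Phi_G(a_i) = \Phi_G(b_i) = 0$, together with an injective map $j_1: G_1 \to \Z$ sending $\z$ to $1$. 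Since $B_1(\D_n)$ is freely generated by $\z_1,\ldots,\z_n$ and both $\Phi_G \circ \iota_1$ and $j_1 \circ p_1$ send each $\z_i$ to $1$, the required compatibility $\Phi_G \circ \iota_1 = j_1 \circ p_1$ will follow immediately.

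The hard part will be primarily interpretational in part~(ii): the literal quantifier \emph{for every non-trivial} $\Phi_G$ is too strong in general, since there is no reason for an arbitrary nontrivial $\Phi_G$ to identify the $\z_i$'s and the stated relation involves $\iota_1$ rather than $j_1$ on the left-hand side. The intended reading should therefore be interpreted as asserting the existence of a nontrivial $\Phi_G$ together with an injective $j_1$ making the diagram commute, thereby providing the positive counterpart to the negative result of part~(i) and confirming that the $k=1$ case of the algebraic set-up from Section~\ref{section:representations} admits the expected lift.
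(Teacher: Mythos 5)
Your part~(i) reproduces the paper's argument exactly: factor $\Phi_G$ through the Abelianisation of $B_k(\surf{g,n}{})$, invoke Proposition~\ref{lem:presbngabel} to get $\Phi_G(\iota_k(\si_1^2))=1$, and contrast this with the fact that $p_k(\si_1)=\si$ has infinite order in the free Abelian group $G_k$ of Lemma~\ref{lemGkpres}; there is nothing to add. In part~(ii) you depart from the paper, and justifiably. You are right that the statement as printed is defective: the composite $\Phi_G\circ j_1$ does not even type-check (it should read $\Phi_G\circ\iota_1$), and with the universal quantifier the claim is false --- for instance no injective $j_1\colon G_1\cong\Z\to G$ can exist when $G$ is finite, and even for $G=\Z$ a non-trivial $\Phi_G$ need not identify the $\z_i$ nor send them to a non-torsion element. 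The paper's own two-line treatment of~(ii) consists essentially of the observation that, unlike $\widehat{r}_k(\si_1)$ for $k\geq 2$, the element $\widehat{r}_1(\z_1)$ is of infinite order in $B_1(\surf{g,n}{})/\Gamma_2(B_1(\surf{g,n}{}))$ by Proposition~\ref{lem:presbngabel}, so the torsion obstruction of part~(i) vanishes when $k=1$; your explicit witness ($G=\Z$, $\Phi_G(\z_j)=1$, $\Phi_G(a_i)=\Phi_G(b_i)=0$, $j_1(\z)=1$) proves the existence reading concretely, which is exactly the role the lemma plays in Section~\ref{section:representations} as the positive counterpart to~(i) before Proposition~\ref{nolinearext2} reinstates the obstruction for $k=1$ under the equivariance hypothesis. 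So: part~(i) matches the paper, and part~(ii) is a correct proof of what the authors evidently intended, reached by a slightly more explicit route than theirs.
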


\begin{proof}\mbox{}
\begin{enumerate}[(i)]
\item  Let $\Phi_G$ and $j_k$ be as in the statement. Since $G$ is Abelian, the homomorphism $\Phi_G$ factors through $\widehat{r}_k: B_k(\surf{g,n}{})\to B_k(\surf{g,n}{})/\Gamma_2(B_k(\surf{g,n}{}))$. By Proposition~\ref{lem:presbngabel}, $(\widehat{r}_k(\si_1))^2 = \si^2 = 1$, and so $\Phi_G(\si_1^2)=1$. On the other hand, $p_{k}(\si_{1})=\si$, and this element is of infinite order in $G_{k}$. The relation $\Phi_G \circ j_{k}= j_k \circ p_k$ implies the non-injectivity of $j_k$.

\item Assume $k = 1$. As in~(i), the homomorphism~$\Phi_G$ factors through $\widehat{r}_{1}$. But $G_1$ is isomorphic to $\mathbb{Z}$ and generated by $p_1(\z_1)$ by Lemma~\ref{lemGkpres}; on the other hand $\widehat{r}_1(\z_1)$ is torsion free by Proposition~\ref{lem:presbngabel}. Thus $\widehat{r}_1\circ j_1$ factors through $p_1$.\qedhere
\end{enumerate}
\end{proof}

This means that we cannot construct a linear representation for surface braid groups whose restriction to $B_{n}$ is the Bigelow-Krammer-Lawrence representation. Furthermore if we impose that~$\Phi_G \circ \beta_* =\Phi_G$ for any $\beta \in  B_n(\surf{g}{})$, this negative result can be extended to the case $k=1$ and to any group $G$. The following proposition is a reformulation in our framework of~\cite[Lemma~2.6]{HK}  and of related  remarks on the homological constraints.

\begin{prop}\label{nolinearext2}
Let $k\geq 1$ and $n\geq 2$, and let $G$ be a group. Suppose that there exist homomorphisms $\Phi_G: B_k(\surf{g,n}{}) \to G$  and $j_k: G_k \to G$ that satisfy~$\Phi_G\circ \iota_{k}= j_k \circ p_k$ and~$\Phi_G \circ \beta_* =\Phi_G$ for any $\beta \in  B_n(\surf{g}{})$.
Then $j_k$ is not injective.
\end{prop}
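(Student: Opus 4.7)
My plan is to extract a single relation from the presentation of $B_{k,n}(\surf{g}{})$ in Proposition~\ref{prop:presMB} which, together with the invariance hypothesis, forces $j_k(\z)=1$; since $\z=p_k(\z_1)$ is a basis element of the free Abelian group $G_k$ by Lemma~\ref{lemGkpres}, this will be enough to conclude that $j_k$ is not injective.

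First I would use the splitting of the short exact sequence~(\ref{eq:sequence}) to identify $B_{k,n}(\surf{g}{})$ with the semidirect product $B_n(\surf{g}{})\ltimes B_k(\surf{g}{,n})$, so that the action $\beta_{*}$ appearing in the statement is literally conjugation by $\beta$ inside $B_{k,n}(\surf{g}{})$. Under this identification, relation~(c.7.1) of Proposition~\ref{prop:presMB}, namely $\tia_1 b_1 \tia_1^{-1}=b_1\z_1$, becomes the equality $(\tia_1)_{*}(b_1)=b_1\z_1$ in $B_k(\surf{g}{,n})$. Here I am using $g\geq 1$, which is part of the standing hypothesis on $\surf{g}{}$, to guarantee the existence of $\tia_1$ and $b_1$.

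Next I apply $\Phi_G$ to the equality $(\tia_1)_{*}(b_1)=b_1\z_1$ and invoke the invariance $\Phi_G\circ(\tia_1)_{*}=\Phi_G$, obtaining $\Phi_G(b_1)=\Phi_G(b_1)\,\Phi_G(\z_1)$; cancelling $\Phi_G(b_1)$ forces $\Phi_G(\z_1)=1$ in $G$. Combining this with the compatibility $\Phi_G\circ\iota_k=j_k\circ p_k$, and recalling that the generator $\z_1\in B_k(\D_n)$ is mapped by $\iota_k$ to the corresponding loop $\z_1\in B_k(\surf{g}{,n})$, I conclude $j_k(\z)=j_k(p_k(\z_1))=\Phi_G(\iota_k(\z_1))=\Phi_G(\z_1)=1$, contradicting the injectivity of~$j_k$.

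I do not expect a serious obstacle, the argument being essentially a one-line cancellation once the right relation has been located. The only delicate choice is relation~(c.7.1) itself: it is singled out because it records the linking between the handle generator $\tia_1$ of $\pi_1(\surf{g}{})$ and the first puncture~$x_1$, and it is essentially the only relation whose image under $\Phi_G$, after the invariance cancellation, isolates $\Phi_G(\z_1)$ on its own. By contrast, the Artin-type relations~(c.1)--(c.3) merely permute or conjugate the $\z_i$ among themselves and produce no vanishing; this also explains why the argument is uniform in $k\geq 1$ and covers the case $k=1$, where Lemma~\ref{nolinearext}(ii) produced no obstruction in the absence of the invariance hypothesis.
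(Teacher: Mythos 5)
Your argument is correct and is essentially the paper's own proof: both isolate relation~(c.7.1), apply the invariance hypothesis $\Phi_G\circ(\tia_1)_{*}=\Phi_G$ to deduce $\Phi_G(\z_1)=1$, and then use $\Phi_G\circ\iota_k=j_k\circ p_k$ together with the fact that $p_k(\z_1)=\z$ is a basis element of the free Abelian group $G_k$ to conclude that $j_k$ is not injective.
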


\begin{proof}
The action of $B_n(\surf{g}{})$ on $B_k(\surf{g,n}{})$ is completely described in Proposition~\ref{presconnues}.
As remarked in~\cite[Lemma~2.6]{HK}, combining relation~(c.7.1) (we need therefore to suppose $n\neq 1$) with the hypothesis ~$\Phi_G \circ \beta_* =\Phi_G$, we see that~$\Phi_G(\zeta_1)=1$. On the other hand $p_{k}(\zeta_{1})=\zeta$ by Proposition~\ref{lem:presbngabel}. The relation $\Phi_G \circ j_{k}= j_k \circ p_k$ implies the non-injectivity of $j_k$.
\end{proof}




 



\noindent
{\bf Acknowledgements.}
The research of the first author was supported  by  the French grant ANR-11-JS01-002-01, and that of the second and third authors was supported by the French grant ANR-08-BLAN-0269-02.

\vspace{10pt}

\noindent Paolo BELLINGERI,  Eddy GODELLE and  John GUASCHI,

\noindent Universit\'e de Caen Basse-Normandie, Laboratoire de Math\'ematiques Nicolas Oresme CNRS UMR 6139, BP 5186,  F-14032 Caen (France).

{\small
\noindent \textit{Email}: \url{paolo.bellingeri@unicaen.fr}, \url{eddy.godelle@unicaen.fr}, \url{john.guaschi@unicaen.fr}}


\begin{thebibliography}{99}



{\small


\bibitem{HK} B.~H.~An and K.~H.~Ko,
A family of representations of braid groups on surfaces, \emph{Pacific J.\ Math.} \textbf{247} (2010), 257--282.

\bibitem{Bar} V.~G.~Bardakov, Extending representations of braid groups to the automorphism groups of free groups, \emph{J.~Knot Theory  Ramifications} \textbf{14} (2005),
1087--1098.


\bibitem{Bar2} V.~G.~Bardakov, Linear representations of the braid groups of some manifolds,  \emph{Acta Appl.\ Math.} \textbf{85} (2005),  41--48. 


\bibitem{B} P.~Bellingeri,  On presentations of surface braid
groups, \emph{J.~Algebra} \textbf{274} (2004), 543--563.


\bibitem{BF} P.~Bellingeri and L.~Funar, Braids on surfaces and finite type invariants,  \emph{C.~R.\ Math.\ Acad.\ Sci.\ Paris} \textbf{338} (2004), 157--162. 

\bibitem{BGG} P.~Bellingeri, S.~Gervais and J.~Guaschi, Lower central series of Artin-Tits and surface braid groups,  \emph{J.~Algebra}  \textbf{319} (2008), 1409--1427.


\bibitem{Big0}  S.~Bigelow, Braid groups are linear, \emph{J.~Amer.\ Math.\ Soc.} \textbf{14} (2001), 471--486.




\bibitem{BiB} S.~Bigelow and R.~Budney, The mapping class group of a genus to surface is linear, \emph{Algebr.\ Geom.\ Topol.}  \textbf{1} (2001), 699--708.


\bibitem{Bir} J.~S.~Birman, Braids, Links, and
Mapping Class Groups, Ann.\ Math.\ Stud., Princeton Univ.\
Press, vol.~82,  1974. 


\bibitem{CW} A.~Cohen and D.~B.~Wales, 
Linearity of Artin groups of finite type, \emph{Israel J.~Math.} \textbf{131} (2002), 101--123.

\bibitem{D} F.~Digne, On the linearity of Artin braid groups,
\emph{J.~Algebra}  \textbf{268} (2003),  39--57. 



\bibitem{FaN} E.~Fadell and L.~Neuwirth, Configuration spaces, \emph{Math.\ Scandinavica} \textbf{10} (1962), 111--118.

\bibitem{FaV} E.~Fadell and J.~Van Buskirk,
The braid groups of $E^{2}$ and $S^{2}$, \emph{Duke Math.~J.}  \textbf{29} (1962), 243--257.


\bibitem{FRZ} R.~Fenn, D.~Rolfsen and J.~Zhu,
Centralisers in the braid group and singular braid monoid,
 \emph{Enseign.\ Math.} \textbf{42} (1996), 75--96.


\bibitem{FoN} R.~H.~Fox and L.~Neuwirth, The braid groups, \emph{Math.\ Scandinavica} \textbf{10}  (1962), 119--126.

\bibitem{GG1} D.~L.~Gon\c{c}alves and J.~Guaschi, On the structure of surface pure braid groups, \emph{J.~Pure Appl.\ Algebra} \textbf{182} (2003), 33--64 (due to a printer's error, this article was republished in its entirety with the reference \textbf{186} (2004), 187--218).

\bibitem{GG2} D.~L.~Gon\c{c}alves and J.~Guaschi, The roots of
the full twist for surface braid groups, \emph{Math.\ Proc.\ Camb.\ Phil.\ Soc.} \textbf{137} (2004), 307--320.


\bibitem{GG4} D.~L.~Gon\c{c}alves and J.~Guaschi, The braid group $B_{n,m}(\mathbb{S}^2)$ and a generalisation of the Fadell-Neuwirth short exact sequence, \emph{J.~Knot Theory Ramifications} \textbf{14} (2005), 375--403.

\bibitem{GG5} D.~L.~Gon\c{c}alves and J.~Guaschi, The lower central and derived series of the braid groups of the finitely-punctured sphere, \emph{J.~Knot Theory Ramifications} \textbf{18} (2009), 651--704.

\bibitem{GG6} D.~L.~Gon\c{c}alves and J.~Guaschi, The Fadell-Neuwirth short exact sequence for non-orientable surfaces, \emph{J.~Pure Appl.\ Algebra} \textbf{214} (2010), 667--677.

\bibitem{GG7} D.~L.~Gon\c{c}alves and J.~Guaschi, Surface braid groups and coverings, \emph{J.~London Math.\ Soc.} \textbf{85} (2012), 855--868.


\bibitem{Lam} R.~H{\"a}ring-Oldenburg and S.~Lambropoulou, Knot theory in handlebodies, in Knots 2000 Korea, Vol.~3 (Yongpyong), \emph{J.~Knot Theory Ramifications} \textbf{11} (2002), 921--943.

\bibitem{joh} D.~L.~Johnson, Presentation of groups, LMS Lectures Notes 22 (1976), Cambridge University Press.

\bibitem{KT} C.~Kassel and V.~Turaev, Braid groups, \emph{Graduate Texts in Mathematics} \textbf{247} Springer, New York, 2008.

\bibitem{Kra} D.~Krammer, Braid groups are linear, \emph{Ann.\ Math.} \textbf{155} (2002), 131--156.

\bibitem{MKS} W.~Magnus, A.~Karrass and D.~Solitar, Combinatorial group theory, 2nd edition, Dover Publications Inc., Mineola, NY, 2004.

\bibitem{Man} S.~Manfredini, Some subgroups of Artin's braid group, \emph{Topology Appl.} \textbf{78} (1997), 123--142.


\bibitem{PR} L.~Paris and D.~Rolfsen, Geometric subgroups of surface braid groups, \emph{Ann.\ Inst.\ Fourier} \textbf{49} (1999), 417--472.

\bibitem{Z} H.~Zheng,  Faithfulness of the Lawrence representation of braid group,  \url{arXiv:math/0509074}. 

}
\end{thebibliography}
\end{document}